    \providecommand*{\input@path}{}
    \g@addto@macro\input@path{{../}}% append
    \newcommand{\red}{\textcolor{red}} %
    \newcommand{\blue}{\textcolor{blue}} %
    \newcommand{\magenta}{\textcolor{magenta}} 
\newlist{steps}{enumerate}{1}
\setlist[steps, 1]{itemsep=3pt,leftmargin=0cm,itemindent=.5cm,labelwidth=\itemindent,labelsep=0cm,align=left,label = \emph{Step \arabic*}:\,}
\newtheorem*{rep@theorem}{\rep@title}
\newcommand{\newreptheorem}[2]{%
\newenvironment{rep#1}[1]{%
 \def\rep@title{#2 \ref{##1}}%
 \begin{rep@theorem}}%
 {\end{rep@theorem}}}
\newtheorem*{rep@cor}{\rep@title}
\newcommand{\newrepcor}[2]{%
\newenvironment{rep#1}[1]{%
 \def\rep@title{#2 \ref{##1}}%
 \begin{rep@cor}}%
 {\end{rep@cor}}}
\newtheorem*{rep@prop}{\rep@title}
\newcommand{\newrepprop}[2]{%
\newenvironment{rep#1}[1]{%
 \def\rep@title{#2 \ref{##1}}%
 \begin{rep@prop}}%
 {\end{rep@prop}}}
\newtheorem{cor}{Corollary}[section]
\newtheorem{corx}{Corollary}
\renewcommand{\thecorx}{\Alph{corx}}
\newtheorem{theorem}[cor]{Theorem}
\newtheorem{thmx}[corx]{Theorem}
\renewcommand{\thethmx}{\Alph{thmx}}
\newtheorem{prop}[cor]{Proposition}
\renewcommand{\thepropx}{\Alph{propx}}
\newtheorem{lemma}[cor]{Lemma}
\newtheorem*{problem*}{Problem}
\theoremstyle{definition}
\newtheorem{defi}[cor]{Definition}
\theoremstyle{remark}
\newtheorem{remark}[cor]{Remark}
\newtheorem*{remark*}{Remark}
\newtheorem{example}[cor]{Example}
\theoremstyle{plain}
\newcommand{\thistheoremname}{}
\newtheorem*{genericthm}{\thistheoremname}
\newenvironment{namedthm}[1]
  {\renewcommand{\thistheoremname}{#1}%
   \begin{genericthm}}
  {\end{genericthm}}
\newcommand{\rank}{\hbox{rank}\ }
\newcommand{\Stab}{\hbox{Stab}\ }
\DeclareMathOperator\arctanh{arctanh}
\DeclareMathOperator*{\bigcdot}{\scalerel*{\cdot}{\bigodot}}
\newcommand{\dwp}{d_{\mathrm{W}\!\mathrm{P}}}
\newcommand{\weil}{{\mathrm{W}\!\mathrm{P}}}
\newcommand{\Th}{{\mathrm{T}\mathrm{h}}}
\newcommand{\dl}{d_{\mathrm{L}}}
\newcommand{\Tt}{\mathbb{T}}
\newcommand{\cC}{{\mathcal C}}
\newcommand{\cD}{{\mathcal D}}
\newcommand{\cE}{{\mathcal E}}
\newcommand{\cF}{{\mathcal F}}
\newcommand{\cL}{{\mathcal L}}
\newcommand{\cM}{{\mathcal M}}
\newcommand{\cN}{{\mathcal N}}
\newcommand{\cP}{{\mathcal P}}
\newcommand{\cS}{{\mathcal S}}
\newcommand{\cT}{{\mathcal T}}
\newcommand{\cQ}{{\mathcal Q}}
\newcommand{\cW}{{\mathcal W}}
\newcommand{\cY}{{\mathcal Y}}
\newcommand{\cDY}{{\mathcal{DY}}}
\newcommand{\cCP}{{\mathcal C\mathcal P}}
\newcommand{\cML}{{\mathcal M\mathcal L}}
\newcommand{\cFML}{{\mathcal F\mathcal M\mathcal L}}
\newcommand{\cGH}{{\mathcal G\mathcal H}}
\newcommand{\cQF}{{\mathcal Q\mathcal F}}
\newcommand{\cQS}{{\mathcal Q\mathcal S}}
\newcommand{\C}{{\mathbb C}}
\newcommand{\U}{{\mathbb U}}
\newcommand{\Hh}{{\mathbb H}}
\newcommand{\PP}{\mathbb{P}}
\renewcommand{\Pr}{\mathbb{P}}
\newcommand{\N}{{\mathbb N}}
\newcommand{\R}{{\mathbb R}}
\newcommand{\Z}{{\mathbb Z}}
\newcommand{\Kt}{\tilde{K}}
\newcommand{\Mt}{\tilde{M}}
\newcommand{\dr}{{\partial}}
\newcommand{\betab}{\overline{\beta}}
\newcommand{\kappab}{\overline{\kappa}}
\newcommand{\pib}{\overline{\pi}}
\newcommand{\taub}{\overline{\tau}}
\newcommand{\ub}{\overline{u}}
\newcommand{\Sigmab}{\overline{\Sigma}}
\newcommand{\alphad}{\dot{\alpha}}
\newcommand{\gd}{\dot{g}}
\newcommand{\hd}{\dot{h}}
\newcommand{\diff}{\mbox{Diff}}
\newcommand{\dev}{\mbox{dev}}
\newcommand{\devb}{\overline{\mbox{dev}}}
\newcommand{\devt}{\tilde{\mbox{dev}}}
\newcommand{\vol}{\mbox{Vol}}
\newcommand{\hess}{\mbox{Hess}}
\newcommand{\cb}{\overline{c}}
\newcommand{\gb}{\overline{g}}
\newcommand{\Mb}{\overline{M}}
\newcommand{\db}{\overline{\partial}}
\newcommand{\Sigmat}{\tilde{\Sigma}}
\newcommand{\Hyp}{\mathbb{H}}
\newcommand{\Sph}{\mathbb{S}}
\newcommand{\AdS}{\mathbb{A}\mathrm{d}\mathbb{S}}
\newcommand{\dS}{\mathrm{d}\mathbb{S}}
\newcommand{\cbull}{\bullet}
\newcommand{\psl}{\mathfrak{sl}}
\newcommand{\SL}{\mathrm{SL}}
\newcommand{\PSL}{\mathrm{PSL}}
\newcommand{\dual}{\star}
\newcommand{\ph}{\varphi}
\newcommand{\sg}{\varsigma}
\newcommand{\im}{\textrm{Im}}
\newcommand{\co}{\mathrm{co}\,}
\newcommand{\injrad}{\mathrm{injrad}}
\newcommand{\arcsinh}{\mathrm{arcsinh}}
\newcommand{\thin}{\mathrm{thin}}
\newcommand{\thick}{\mathrm{thick}}
\newcommand{\Isom}{\mathrm{Isom}}
\newcommand{\cunc}{{C}^\infty_c}
\newcommand{\cun}{{\mathrm{C}}^\infty}
\newcommand{\dd}{d_D}
\newcommand{\dmin}{d_{\mathrm{min}}}
\newcommand{\dmax}{d_{\mathrm{max}}}
\newcommand{\Dom}{\mathrm{Dom}}
\newcommand{\dn}{d^\nabla}
\newcommand{\ded}{\delta_D}
\newcommand{\delmin}{\delta_{\mathrm{min}}}
\newcommand{\delmax}{\delta_{\mathrm{max}}}
\newcommand{\hmin}{H_{\mathrm{min}}}
\newcommand{\maxi}{\mathrm{max}}
\newcommand{\oL}{\overline{L}}
\newcommand{\oP}{{\overline{P}}}
\newcommand{\xb}{{\overline{x}}}
\newcommand{\yb}{{\overline{y}}}
\newcommand{\Ran}{\mathrm{Ran}}
\newcommand{\tgamma}{\tilde{\gamma}}
\newcommand{\gammab}{\overline{\gamma}}
\newcommand{\Diffeo}{\mbox{Diff}}
\newcommand{\cotan}{\mbox{cotan}}
\newcommand{\area}{\mbox{Area}}
\newcommand{\lambdat}{\tilde\lambda}
\newcommand{\xt}{\tilde x}
\newcommand{\Ct}{\tilde C}
\newcommand{\St}{\tilde S}
\newcommand{\trace}{\mbox{\rm tr}}
\newcommand{\tr}{\mbox{\rm tr}}
\newcommand{\tgh}{\mbox{th}}
\newcommand{\sh}{\mathrm{sinh}\,}
\newcommand{\ch}{\mathrm{cosh}\,}
\newcommand{\grad}{\operatorname{grad}}%{\mbox{grad}}
\newcommand{\isom}{\mathrm{Isom}}
\newcommand{\hc}{H\C^3}
\newcommand{\cp}{\C P^1}
\newcommand{\rp}{\R P^1}
\newcommand{\Vol}{\mathrm{Vol}}
\newcommand{\diam}{\mathrm{diam}}
\newcommand{\Area}{\mathrm{Area}}
\newcommand{\dth}{d_{\mathrm{Th}}}
\newcommand{\dteich}{d_{\mathrm{Teich}}}
\newcommand{\h}{\mathbb{H}}
\newcommand{\Ker}{\mathrm{Ker}}
\newcommand{\Ima}{\mathrm{Im}}
\newcommand{\Ip}{\mathrm{I}^+}
\newcommand{\Ipm}{\mathrm{I}^-}
\newcommand{\note}[1]{{\small {\color[rgb]{1,0,0} #1}}}
\newcommand{\rar}{\rightarrow}
\newcommand{\lra}{\longrightarrow}
\newcommand{\war}{\rightharpoonup}
\newcommand{\da}{\omega}
\newcommand{\id}{\mathrm{I}}
\newcommand{\SO}{\mathrm{SO}}
\newcommand{\so}{\mathfrak{so}}
\newcommand{\ddt}{\left.\frac{d}{dt}\right|_{t=0}}
\newcommand{\cA}{\mathcal{A}}
\newcommand{\D}{\mathbb{D}}
\newcommand{\I}{\mathrm{I}}
\newcommand{\II}{\mathrm{I}\hspace{-0.04cm}\mathrm{I}}
\newcommand{\III}{\mathrm{I}\hspace{-0.04cm}\mathrm{I}\hspace{-0.04cm}\mathrm{I}}
\renewcommand{\Re}{\mathrm{Re}}
\def\l{\lambda}
\def\Hess{\mathrm{Hess}}
\def\Div{\mathrm{div}}
\def\Teich{\mathcal{T}}
\def\Grad{\mathrm{grad}}
\def\wti#1{\widetilde{#1}}
\def\what#1{\widehat{#1}}
\def\ol#1{\overline{#1}}
\def\earth{\mathcal E}
\def\En{\mathbb{I}}
\def\qd{\varphi}
\def\qdd{q}
\def\length{\ell}
\def\ed{M}
\def\Id{\mathbbm 1}
\DeclareRobustCommand{\SkipTocEntry}[4]{}
\begin{document}

\setcounter{secnumdepth}{3}
\setcounter{tocdepth}{2}

\title[Entire surfaces of constant curvature in Minkowski 3-space]{Entire surfaces of constant curvature\\in Minkowski 3-space}

\author[Francesco Bonsante]{Francesco Bonsante}
\address{Francesco Bonsante: Dipartimento di Matematica ``Felice Casorati", Universit\`{a} degli Studi di Pavia, Via Ferrata 5, 27100, Pavia, Italy.} \email{bonfra07@unipv.it} 
\author[Andrea Seppi]{Andrea Seppi}
\address{Andrea Seppi: University of Luxembourg, Mathematics Research Unit, Maison du Nombre, 6 Avenue de la Fonte, Esch-sur-Alzette L-4364 Luxembourg. } \email{andrea.seppi@uni.lu}
\author[Peter Smillie]{Peter Smillie}
\address{Peter Smillie: Harvard University, Department of Mathematics, One Oxford St, Cambridge, MA, 02138, USA.} \email{smillie@math.harvard.edu}

%\date{\today}

\thanks{The first aurthor was partially supported by Blue Sky Research project ``Analytic and geometric properties of low-dimensional manifolds" . The first two authors are members of the national research group GNSAGA}

\begin{abstract}
This paper concerns the global theory of properly embedded spacelike surfaces in three-dimensional Minkowski space in relation to their Gaussian curvature. We prove that every regular domain which is not a wedge is uniquely foliated by properly embedded convex surfaces of constant Gaussian curvature. This is a consequence of our classification of surfaces with bounded prescribed Gaussian curvature, sometimes called the Minkowski problem, for which partial results were obtained by Li, Guan-Jian-Schoen, and Bonsante-Seppi. Some applications to minimal Lagrangian self-maps of the hyperbolic plane are obtained.
\end{abstract}

\maketitle
% * <sep88jr@gmail.com> 2018-04-04T12:01:13.026Z:
%
% ^.

\addtocontents{toc}{\SkipTocEntry}

\section*{Introduction} 
Minkowski $3$-space is the simply connected geodesically complete flat Lorentzian manifold $\R^{2,1}=(\R^3, dx_1^2+dx_2^2 - dx_3^2)$.
A $C^1$ immersed surface $\Sigma$ in $\R^{2,1}$ is called \emph{spacelike} if the restriction of the Lorentzian metric to $T\Sigma$ is a Riemannian metric.
Any spacelike surface is locally a graph of the form $x_3=f(x_1,x_2)$  for some function $f\in C^1(\R^2)$
which is strictly $1$-Lipschitz with respect to the Euclidean metric of the plane.
The aim of the paper is to provide a full classification of properly embedded spacelike surfaces with constant Gaussian curvature (CGC) in Minkowski space
in terms of their asymptotic behavior.
%We will also provide a  full classification of properly embedded spacelike surfaces with constant mean curvature (CMC) in $\R^{2,1}$ that generalizes and completes previous work by Choi and Treibergs \cite{choitreibergs}.

Let us explain more precisely the content of the classification.
Given a properly embedded spacelike surface $\Sigma$ in $\R^{2,1}$, its domain of dependence $\mathcal D_\Sigma$ 
is the region of $\R^{2,1}$ consisting of those points through which any inextendable causal path must meet $\Sigma$ (Definition \ref{def domain of dependence}).
We show in Corollary \ref{cor domains dependence} that the domain of dependence of a properly embedded CGC surface $\Sigma$ is a \emph{regular domain} up to time-reversal. %Todo: It's not actually that corollary which shows it is a regular domain! Need to rule out 0,1,2.
Here the terminology is taken from \cite{Bonsante}: a regular domain is a convex open domain in 
$\R^{2,1}$ that is the intersection of the future of its null support planes and is neither the whole $\mathbb R^{2,1}$ nor the future of a single null plane.

Among regular domains we call \emph{wedges} those domains which are obtained as the intersection of the futures of exactly two null planes.
It turns out that a wedge is never the domain of dependence of a properly embedded CGC surface.
The main goal of this paper is to prove that aside from this case, every regular domain is the domain of dependence of exactly one
properly embedded surface of constant Gaussian curvature $K$, for any fixed $K\in(0,+\infty)$. In this paper, $K$ is the \emph{extrinsic} Gaussian curvature, which is the determinant of the shape operator and the negative of the intrinsic Gaussian curvature.

\begin{thmx} \label{thm-CGC-problem}
Fix $K>0$. Given any regular domain $\mathcal D\subset \R^{2,1}$ which is not a wedge, there exists a unique properly embedded CGC-$K$ surface whose domain of dependence is $\mathcal D$. \end{thmx}

Once we conclude as a consequence of Corollary \ref{cor domains dependence} that the domain of dependence of every future-convex CGC-$K$ surface is a regular domain and not a wedge, we immediately have the corollary:

%In fact, since we have shown in Corollary \ref{cor domains dependence} that the domain of dependence of every CGC-$K$ surface is a regular domain -- at least, up to time reversal, which interchanges past-convex surfaces with future-convex surfaces -- we have the immediate corollary:

\begin{corx} \label{thm-CGC-problem-2}
Fix $K>0$. There is a bijection from the set of properly embedded future-convex CGC $K$-surfaces in $\R^{2,1}$ to the set of regular domains which are not a wedge, given by associating to a surface its domain of dependence.
\end{corx}

%By virtue of Corollary \ref{cor domains dependence}, this can be restated as: \blue{this is a new comment: maybe it is better to state shortly the content of Corollary \ref{cor domains dependence}, instead of making reference to it. Moreover, I would not really need to state this as Theorem B, as this is just a restatement of Theorem A and we do not provide separate proofs in the paper. I suggest just to put the statement of Theorem B exactly where it is, but in the text of the introduction.}
%\begin{thmx} \label{thm-CGC-problem-2}
%Fix $K>0$. Taking the domain of dependence gives a bijection from the set of properly embedded CGC $K$-surfaces in $\R^{2,1}$ to the set of regular domains which are not a wedge. \blue{I would rather say: Fix $K>0$. There is a bijection from the set of properly embedded CGC $K$-surfaces in $\R^{2,1}$ to the set of regular domains which are not a wedge, given by associating to a surface its domain of dependence. Moreover, I would call this corollary instead of theorem.}
%\end{thmx}

We may restate our main theorem in terms of lower semi-continuous functions on the circle. In the Klein model of the hyperbolic plane $\Hyp^2$, points on the boundary of $\Hyp^2$ represent lightlike directions in $\R^{2,1}$, which by the duality induced by the Lorentzian inner product
are in bijection with null linear planes. The space of null affine planes in $\R^{2,1}$ is naturally
identified to a cylinder $\partial\Hyp^2\times\R$. % \cite{barbot}. I think this does not need a citation - Peter 
Two points $\partial\Hyp^2\times\R$  correspond to parallel planes if and only if their first components coincide.
From this point of view regular domains are in bijection with lower-semicontinuous functions $\ph:\partial\Hyp^2\to\R\cup\{+\infty\}$
that are finite at least on two points (Proposition \ref{prop:bijection reg dom}). We will call $\mathcal{D}_\ph$ the domain corresponding to the function $\ph$. If $\Sigma$ is the graph of an entire convex function $f:\R^2\to\R$, we call $\Sigma$ \emph{entire}. A simple argument (Proposition \ref{prop properly emb iff entire}) shows that every properly immersed spacelike surface is entire. It was proved in \cite[Subsection 2.3]{Bonsante:2015vi} that the function $\ph$ corresponding to
$\mathcal D_\Sigma$ is given by
\[
     \ph(\mathsf{y})=-\lim_{r\to+\infty}(f(r\mathsf{y})-r)~.
\]
In this way the function $\ph$ encodes the asymptotic behavior of the surface $\Sigma$. The graph of $\ph$ can also be regarded
as the asymptotic boundary of $\Sigma$ in the Penrose causal compactification of $\R^{2,1}$, but this point of view will not developed here.

Therefore Theorem \ref{thm-CGC-problem} establishes a correspondence between entire CGC graphs and lower semi-continuous functions on the circle which may take the value $+ \infty$:

\begin{corx} \label{cor classification Ksurfaces}
Fix $K>0$. There is a bijection between the set of future-convex entire surfaces of constant Gaussian curvature $K$ in $\R^{2,1}$ and the set of lower semicontinuous functions $\ph:\partial\Hyp^2\to\R\cup\{+\infty\}$ finite on at least three points. %, which is defined by $\Sigma\mapsto (u_\Sigma)|_{\partial\D}$.
\end{corx}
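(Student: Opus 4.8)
The plan is to obtain Corollary \ref{cor classification Ksurfaces} as a formal consequence of Corollary \ref{thm-CGC-problem-2} (equivalently Theorem \ref{thm-CGC-problem}) together with Proposition \ref{prop:bijection reg dom} and Proposition \ref{prop properly emb iff entire}: there is essentially nothing to prove beyond translating, via the correspondence $\mathcal D\leftrightarrow\ph$, the condition ``$\mathcal D$ is not a wedge'' into the condition ``$\ph$ is finite on at least three points'', all of the real content being hidden in Theorem \ref{thm-CGC-problem}.

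In more detail, I would first use Proposition \ref{prop properly emb iff entire} to identify the set of future-convex entire CGC-$K$ surfaces with the set of future-convex properly embedded CGC-$K$ surfaces: an entire convex graph is a closed embedded surface, hence properly embedded, and conversely a properly immersed spacelike surface is entire, hence the graph of a function and in particular embedded. By Corollary \ref{thm-CGC-problem-2} the map $\Sigma\mapsto\mathcal D_\Sigma$ is then a bijection from future-convex entire CGC-$K$ surfaces onto the set of regular domains that are not wedges. Next I would compose with the bijection of Proposition \ref{prop:bijection reg dom} between regular domains and lower semicontinuous functions $\ph:\partial\Hyp^2\to\R\cup\{+\infty\}$ finite on at least two points, under which $\mathcal D_\ph$ is the intersection of the futures of the null affine planes corresponding to the points $(\mathsf y,\ph(\mathsf y))$ with $\ph(\mathsf y)<+\infty$, and these are precisely the null support planes of $\mathcal D_\ph$. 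Hence $\mathcal D_\ph$ is the intersection of the futures of exactly two null planes—i.e.\ a wedge—exactly when $\ph$ is finite on exactly two points; consequently the regular domains that are not wedges correspond exactly to the lower semicontinuous functions finite on at least three points. Composing the two bijections yields the desired one, explicitly realized by $\Sigma\mapsto\ph$ with $\ph(\mathsf y)=-\lim_{r\to+\infty}(f(r\mathsf y)-r)$ as recalled in the introduction.

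The only point that requires any care—and the only candidate for an ``obstacle''—is the bookkeeping for degenerate configurations of two null planes: one must be sure that whenever $\ph$ is finite on exactly two distinct points $\mathsf y_1\neq \mathsf y_2$, including the antipodal case, the domain $\mathcal D_\ph$ really is the intersection of the futures of exactly those two null planes and admits no further null support plane, so that it is indeed counted as a wedge and correctly excluded on both sides of the bijection. This is already contained in the statement and proof of Proposition \ref{prop:bijection reg dom}, so it suffices to invoke it; everything else in the argument is routine, the substance of the classification being Theorem \ref{thm-CGC-problem}.
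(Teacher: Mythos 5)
Your proof is correct as a formal derivation, but it differs from the paper's argument in where the substantive work is placed, and your final paragraph slightly misidentifies the real locus of the issue. You derive the statement by composing Corollary~\ref{thm-CGC-problem-2} with Proposition~\ref{prop:bijection reg dom} and the observation that ``not a wedge'' translates to ``$\ph$ finite at three or more points.'' The paper, by contrast, argues directly from Theorem~\ref{thm-CGC-problem}: it invokes Corollary~\ref{cor domains dependence} to write $\mathcal D_\Sigma = \mathcal D_\ph$, and then uses Theorem~\ref{thm gauss image} to rule out the cases where $\ph$ is finite at $0$, $1$, or $2$ points, since the image of the Gauss map is the interior of the convex hull of $\ph^{-1}(\R)$ and must be a nonempty open subset of $\Hyp^2$.

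The point you should be aware of is that Corollary~\ref{thm-CGC-problem-2} is stated in the introduction as following ``once we conclude \dots\ that the domain of dependence of every future-convex CGC-$K$ surface is a regular domain and not a wedge,'' and that deferred fact is not an automatic consequence of Corollary~\ref{cor domains dependence} alone --- Corollary~\ref{cor domains dependence} allows the defining family $\mathcal F$ to be empty, to contain only parallel planes, or to consist of exactly two non-parallel directions. Ruling those out is precisely what the paper's proof of this corollary accomplishes via Theorem~\ref{thm gauss image}. So the care you locate in ``the bookkeeping for degenerate configurations of two null planes'' inside Proposition~\ref{prop:bijection reg dom} is not really where the work lies (and the antipodal worry is harmless, since antipodal boundary points give non-parallel null planes); the real content you have absorbed into Corollary~\ref{thm-CGC-problem-2} is the Gauss-map-image argument. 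Your proof is valid given Corollary~\ref{thm-CGC-problem-2} as a black box, but anyone tracing the logic will find that the paper's proof of this very corollary is the place where that black box is opened.
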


Next, we will prove that any regular domain that is not a wedge is foliated by CGC surfaces with constant Gaussian curvature ranging from 0 to $\infty$:

\begin{thmx} \label{thm CGC foliation}
For every regular domain $\mathcal D$ in $\R^{2,1}$ which is not a wedge, there exists a unique foliation by properly embedded CGC $K$-surfaces, with $K\in(0,\infty)$.
\end{thmx}
\noindent
As a result, the function $\tau = K^{-1}$ gives a canonical proper function with time-like gradient on every regular domain $\mathcal D$. It is an example of a \emph{geometric canonical time function}, called the $K$-time in \cite{barbotzeghib}.

The study of CGC surfaces in Minkowski space goes back at least to Hano and Nomizu \cite{hanonomizu} who first pointed out the existence of non-standard isometric immersions
of $\Hyp^2$ in $\R^{2,1}$. In \cite{Li} An-Min Li proved the existence part of Theorem \ref{thm-CGC-problem} and 
Corollary \ref{cor classification Ksurfaces} in the case that $\ph$ is smooth.
This result was improved by Guan, Jian and Schoen \cite{schoenetal}: they proved  the existence of an entire CGC $K$-surface only assuming $\ph$ is Lipschitz and possibly infinite on a single open arc. In another direction, Barbot, B\'eguin and Zeghib proved  in  \cite{barbotzeghib} that any regular domain invariant by an affine deformation of a uniform lattice in $\SO(2,1)$ 
contains a CGC $K$-surface. In \cite{Bonsante:2015vi}  the first two authors proved the existence of a CGC surface in a given regular domain under the assumption that the corresponding function $\ph$  is lower semi-continuous and bounded. Moreover in that work it was proved that entire CGC surfaces with bounded second fundamental form are in bijection with regular domains whose corresponding function is Zygmund continuous.

In higher dimensions the problem can be generalized in different ways.
Li's original theorem applies to  
hypersurfaces of constant extrinsic curvature in any dimension. However in dimensions greater than 3 the smoothness condition on $\ph$ plays an 
important role. In fact  an example has been pointed out in \cite{bonfill} of an affine deformation of a uniform lattice in $\SO(3,1)$ which preserve
no hypersurface in $\R^{3,1}$ with constant extrinsic curvature. By contrast in \cite{grahamsmith} it has been proved that any affine deformation
of a uniform lattice in $\SO(3,1)$ preserves exactly one hypersurface of constant scalar curvature. 
%Question: do we know if the domain of discontinuity cannot have a non-invariant surface of CGC?

Theorem \ref{thm-CGC-problem} has been obtained as a consequence of  more general statements about  properly embedded spacelike 
surfaces in Minkowski 3-space of positive Gaussian curvature. Recall that there is a natural notion of Gauss map for spacelike surfaces in Minkowski space. In this context the Gauss map takes values on the hyperbolic plane, which is identified with the set of future-directed unit timelike vectors.
%Recall that the hyperbolic plane can be identified to the set of unit timelike  
%future-directed vectors in $\R^{2,1}$, so for any  smooth spacelike  surface $\Sigma$ in $\R^{2,1}$ a Gauss
%map is defined as $G_\Sigma:\Sigma\to\Hyp^2$ sending $x$ to the positive unit normal of $\Sigma$ at $x$.
%When $\Sigma$ is strictly convex and properly embedded,  $G_\Sigma$ is a homeomorphism  onto some open subset of $\Hyp^2$.
%Upon identifying $T_x\Sigma$ with $T_{G_\Sigma(x)}\Hyp^2$ by parallel transport in $\R^{2,1}$, the differential of the Gauss map is an endomorphism of $T\Sigma$. In analogy with the Euclidean case its determinant is called the Gaussian curvature $\kappa_\Sigma$.
%However in this setting it  coincides with the opposite of the intrinsic curvature (in particular convex surfaces have negative curvature). 
We first prove that if the Gaussian curvature of $\Sigma$ is bounded  by two positive constants  then the image of the Gauss map is a domain
of $\Hyp^2$ bounded by geodesics. More specifically, by Corollary \ref{cor domains dependence}, the domain of dependence of $\Sigma$ is of the form $\mathcal{D}_\ph$ for some lower semi-continuous function $\ph: \partial \Hyp^2 \to \R \cup \{+\infty\}$.
\begin{thmx}\label{thm gauss image}
Let $\Sigma$ a properly embedded spacelike surface in $\mathbb R^{2,1}$ with Gaussian curvature  bounded from above and below by positive constants. Let $\ph:\partial\Hyp^2\to\mathbb R\cup\{+\infty\}$
be such that the domain of dependence of $\Sigma$ is $\mathcal{D}_\ph$. Then the Gauss map of $\Sigma$ is a diffeomorphism onto the interior of the convex hull of
$\partial\mathbb H^2\setminus\ph^{-1}(+\infty)$. %\blue{(I have a slight preference for $\{\varphi<+\infty\}\subseteq\partial\Hyp^2$)}
 \end{thmx}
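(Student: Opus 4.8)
The plan is to pass to the graph picture and then to the Klein model. Write $\Sigma$ as the graph of $f\in C^\infty(\R^2)$ with $|\nabla f|<1$ (possible by Proposition \ref{prop properly emb iff entire}); up to time reversal assume $\Sigma$ is future-convex, so $f$ is convex. In the Klein model of $\Hyp^2$ the Gauss map $N=(1-|\nabla f|^2)^{-1/2}(\nabla f,1)$ becomes the gradient map $\nabla f\colon\R^2\to\D$, the ideal boundary $\partial\Hyp^2$ becomes $\partial\D$, and $\ph$ lives on $\partial\D$. The curvature identity $K=\det\Hess f\,(1-|\nabla f|^2)^{-2}$ turns the hypothesis $\kappa_1\le K\le\kappa_2$ into the Monge--Amp\`ere pinching $\kappa_1(1-|\nabla f|^2)^2\le\det\Hess f\le\kappa_2(1-|\nabla f|^2)^2$; in particular $\Hess f>0$, so $f$ is strictly convex. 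Hence $\nabla f$ is a diffeomorphism onto its image $\Omega:=\nabla f(\R^2)$, which is an open convex subset of $\D$ (being the interior of the domain of the Legendre transform $f^{*}$). Thus $N\colon\Sigma\to\Omega$ is a diffeomorphism onto an open convex subset of $\Hyp^2$, and everything reduces to the identity $\Omega=\mathrm{int}\,\mathcal C$, where $\mathcal C$ is the convex hull of $\partial\Hyp^2\setminus\ph^{-1}(+\infty)$ inside $\overline{\Hyp^2}$.

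The inclusion $\mathrm{int}\,\mathcal C\subseteq\Omega$ is the soft half. Fix $\mathsf y\in\partial\D$ with $\ph(\mathsf y)<+\infty$. By the formula $\ph(\mathsf y)=-\lim_{r\to+\infty}(f(r\mathsf y)-r)$ the convex function $g(r):=f(r\mathsf y)-r$ converges to the finite value $-\ph(\mathsf y)$; a convex function on $[0,+\infty)$ with finite limit is non-increasing with $g'(r)\to0$, hence $\langle\nabla f(r\mathsf y),\mathsf y\rangle=1+g'(r)\to1$, and since $|\nabla f(r\mathsf y)|<1$ this forces $\nabla f(r\mathsf y)\to\mathsf y$. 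Therefore $\partial\D\setminus\ph^{-1}(+\infty)\subseteq\overline\Omega$; as $\overline\Omega$ is closed and convex it contains $\mathcal C$, and $\mathrm{int}\,\mathcal C\subseteq\mathrm{int}\,\overline\Omega=\Omega$.

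For the reverse inclusion $\Omega\subseteq\mathrm{int}\,\mathcal C$ — equivalently $\overline\Omega\subseteq\mathcal C$ — I would establish the following two facts, and then conclude by a short convexity argument: (i) $\overline\Omega\cap\partial\D=\overline{\ph^{-1}(\R)}$; and (ii) $\partial\Omega\cap\Hyp^2$ is a union of complete geodesics. Granting (i)--(ii): the extreme points of the convex set $\overline\Omega$ cannot lie on a relatively open geodesic arc, so they all lie in $\overline\Omega\cap\partial\D=\overline{\ph^{-1}(\R)}$, whence $\overline\Omega=\co(\overline{\ph^{-1}(\R)})=\mathcal C$, as desired (this also shows $\ph^{-1}(\R)$ has at least three points, so the statement is not vacuous). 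To prove (i) note that $\mathsf z\in\overline\Omega\cap\partial\D$ means $\nabla f(p_n)\to\mathsf z$ for some $p_n\to\infty$ in $\Sigma$, so along (roughly) the ray in direction $\mathsf z$ the directional derivative of $f$ tends to $1$; the curvature lower bound $K\ge\kappa_1$ then rigidifies this into $f(r\mathsf z)-r$ having a finite limit — equivalently $\ph(\mathsf z)<+\infty$, or $\mathsf z$ a limit of such directions — because otherwise $f$ would have to grow with recession slope genuinely less than $1$ in direction $\mathsf z$ (a merely convex entire graph can grow like $r-\sqrt r$ along a ray, but the pinched Monge--Amp\`ere equation for $f^{*}$ rules this out while keeping $f$ entire and spacelike). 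Dually: the pinching passes to $\kappa_2^{-1}(1-|v|^2)^{-2}\le\det\Hess f^{*}\le\kappa_1^{-1}(1-|v|^2)^{-2}$ on $\Omega$, and since $(f^{*})^{*}=f$ is finite on all of $\R^2$ one shows $f^{*}$ is strictly convex up to $\partial\Omega$; this is precisely (ii), and also yields (i) by reading off the null support planes of $\mathcal D_\Sigma=\mathcal D_\ph$ (Corollary \ref{cor domains dependence}, Proposition \ref{prop:bijection reg dom}) as limits of tangent planes of $\Sigma$. Geometrically, the same facts come from comparing $\Sigma$ with the hyperboloids of curvature $\kappa_1$ and $\kappa_2$ tangent to it: these barriers pin the tangent planes of $\Sigma$ to $\partial\mathcal D_\ph$ near infinity, forcing $\Sigma$ to be asymptotically tangent, over each wedge edge of $\partial\mathcal D_\ph$, to the two null planes bounding it, so that $N$ never crosses the corresponding geodesic of $\Hyp^2$.

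The main obstacle is exactly facts (i)--(ii): the formal arguments give $\mathrm{int}\,\mathcal C\subseteq\Omega$ and the convexity of $\Omega$ for free, but they do not preclude $\Omega$ from being a ``lens'' bulging past $\mathcal C$ in directions where $\ph=+\infty$ — a phenomenon that genuinely occurs for convex entire graphs whose curvature is allowed to vanish. Ruling it out requires the quantitative two-sided bound $\kappa_1\le K\le\kappa_2$, via barrier/comparison estimates with the tangent hyperboloids or via boundary regularity for the non-degenerate Monge--Amp\`ere equation satisfied by $f^{*}$ on $\Omega$; I expect this to be the technical heart of the proof, the reductions of the first two paragraphs being essentially formal.
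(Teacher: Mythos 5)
Your reductions in the first two paragraphs are sound. Working in the Klein model, the Gauss map becomes $\nabla f$, its image $\Omega$ is an open convex set, and the theorem reduces to $\Omega=\mathrm{int}\,\mathcal C$. Your proof of the soft inclusion $\mathrm{int}\,\mathcal C\subseteq\Omega$ via the one-dimensional convex function $g(r)=f(r\mathsf y)-r$ is correct (and in fact this half also follows formally from convexity of $\mathrm{dom}\,f^*$, since $\mathrm{dom}\,f^*$ contains $\ph^{-1}(\R)$ and $\Omega$ is its interior). The logic of deducing $\overline\Omega=\mathcal C$ from your facts (i)--(ii) via extreme points is also correct.

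However, you have correctly identified (i)--(ii) as the technical heart and then not proved them. The phrases ``the pinched Monge--Amp\`ere equation rules this out,'' ``one shows $f^*$ is strictly convex up to $\partial\Omega$,'' and ``comparing $\Sigma$ with the hyperboloids tangent to it'' are heuristics, not arguments. Two specific gaps: first, strict convexity of $f^*$ in $\Omega$ is automatic from Aleksandrov--Heinz and says nothing about the shape of $\partial\Omega$, so it does not yield (ii); second, ``tangent hyperboloids'' supply only local barriers, whereas ruling out a bulge of $\Omega$ into $\{\ph=+\infty\}$ is a global statement near $\partial\D$. Most importantly, any comparison argument here must be carried out for the support function $u_\Sigma=f^*|_{\overline\D}$, which takes the value $+\infty$, and the standard maximum principle for Monge--Amp\`ere (Theorem \ref{comparison principle}) does not apply to such functions. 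The paper develops precisely the missing machinery: a generalized comparison principle for gradient-surjective convex functions allowed to be $+\infty$ (Proposition \ref{prop:genmax}, packaged as Proposition \ref{prop conv comparison}), together with two explicit global barriers in the dual picture --- the rescaled hyperboloid $v(\mathsf z)=-K_0^{-1/2}\sqrt{1-|\mathsf z|^2}$, which forces $\mathrm{dom}(u_\Sigma)=\mathcal C$ (your (i) and the absence of bulges past $\mathcal C$ in one stroke), and the half-disk barrier $u_{\Sigma_0^{K_0}}$ of Example \ref{ex CGC in half-disk}, conjugated by hyperbolic translations $A_\epsilon$ and sent $\epsilon\to0$, which pins $u_\Sigma$ to the affine interpolation on boundary chords. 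Until you supply an argument of comparable force for the hard inclusion, the proof is incomplete.

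Finally, note a difference in framing: you work on the primal side with $f$ and $\nabla f$ and propose to dualize at the last step, whereas the paper works in the dual (support-function) picture from the start. The dual picture is what makes the barriers tractable, because the Monge--Amp\`ere equation for $u_\Sigma$ is uniformly non-degenerate on $\Omega$ and the barriers have closed-form support functions; it is worth switching to it before you attack (i)--(ii).
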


%Todo: make sure \ph and \phi agree
In Section \ref{sec image of gauss map and support function on boundary} we will give a more precise version of this result, see Theorem \ref{prop constant curvature convex hull2}.
Notice in particular that the image of the Gauss map of a surface with Gaussian curvature bounded by two positive constants 
depends only on the domain of dependence of $\Sigma$. 
We will denote by $\Omega_{\ph}$ the interior of the convex hull of $\partial\Hyp^2\setminus\ph^{-1}(+\infty)$. %Changed $\Omega_\mathcal D$ to $\Omega_\ph$
The second general result which we achieve, and which implies Theorem \ref{thm-CGC-problem}, concerns the Minkowski problem. In general 
the Minkowski problem asks for a convex surface $\Sigma$  in $\R^{2,1}$ for which the domain $\Omega_\Sigma:=G_\Sigma(\Sigma)$
and  the function $\psi :=\kappa_\Sigma\circ G_{\Sigma}^{-1}:\Omega_{\Sigma}\to \R_{>0}$ are prescribed.
We will prove the following statement: 

\begin{thmx} \label{thm mink problem}
Let  $\mathcal D$ be a regular domain in $\R^{2,1}$ which is not a wedge, defined by a function $\ph:\partial\Hyp^2\to\mathbb R\cup\{+\infty\}$,
and let $\psi$  be a continuous function defined on $\Omega_{\ph}$
 which is bounded by two positive constants.
 There exists a {unique} entire spacelike surface $\Sigma$ in $\mathcal D$ whose domain of dependence is $\mathcal D$ and whose curvature function satisfies:
$$\kappa(p)=\psi\circ G_\Sigma(p)$$
for every $p\in\Sigma$, where $G_\Sigma$ is the Gauss map of $\Sigma$.
\end{thmx}

Finally, we give an application of Theorem \ref{thm gauss image} to minimal Lagrangian maps between hyperbolic surfaces. 
The Gauss map of a CGC isometric immersion with $K=1$ into $\mathbb R^{2,1}$ is minimal Lagrangian: this means that it is area preserving and its graph
is a minimal surface in the product. Conversely if $F:\Sigma\to\mathbb H^2$ is a minimal Lagrangian map with $\Sigma$ hyperbolic, 
one can produce an isometric immersion $\sigma_F:\Sigma\to\mathbb R^{2,1}$ such that $F$ coincides with the Gauss map of
$\sigma_F$.
Theorem \ref{thm gauss image} states that if $\sigma_F$ is a proper embedding, then $F$ is injective and its image is a domain bounded by geodesics.
As $\sigma_F$ is always a proper embedding if the domain is complete, we get the following corollary:
  
\begin{corx} \label{cor image min lag map} 
Let $F:\Hyp^2\to\Hyp^2$ be a minimal Lagrangian map. Then $F$ is a diffeomeorphism onto the interior of the convex hull of 
$\overline{F(\Hyp^2)}\cap \partial\Hyp^2$.
\end{corx}

% On the other hand if a minimal Lagrangian map $f:\Sigma\to\mathbb H^2$ is injective then $f^{-1}$ is also minimal Lagrangian as a map from the image.
% Moreover it can be shown that $\sigma_f$ is a proper embedding if and only if $\sigma_{f^{-1}}$ is.
% Using this fact we prove the following property of properly embedded CGC surfaces in Minkowski space.

% \begin{corx} \label{cor intrinsic CGC} 
% Let $\Sigma$ be a properly embedded CGC $1$-surface in Minkowski space.
% Then it is intrinsically isometric to a domain of $\mathbb H^2$ bounded by geodesics.
% \end{corx}

\subsection*{Strategy of the proofs} \addtocontents{toc}{\SkipTocEntry}

The \emph{support function} $u_\Sigma$ (Definition \ref{defi support functions}) of a surface $\Sigma$ is a closed convex function (Definition \ref{def closed covex}) defined on the closed unit disk $\D$, the Klein model of $\overline{\mathbb{H}^2}$. If $\Sigma$ is properly embedded and has positive Gaussian curvature $\kappa(p)$ at every point  $p$, then we show that the Gauss map $G_\Sigma$ is injective, $u_\Sigma$ is finite on the image of $G_\Sigma$.  Moreover on this image $u_\Sigma$ satisfies the equation
\begin{equation} \label{eqn: minkowski problem monge-ampere}
\det D^2 u_\Sigma (\mathsf{x}) =\frac{1}{\kappa(G^{-1}_\Sigma(\mathsf{x}))}(1-|\mathsf x|^2)^{-2}
\end{equation}
where  $|\mathsf x|$ is the Euclidean norm of $\mathsf x$ in the disk \cite{Li}.
The function $\ph$ defining the domain of dependence of $\Sigma$  coincides with the restriction of $u_\Sigma$ to $\partial\mathbb H^2$. 

The support function $u_\Sigma$ determines the surface $\Sigma$.
%, and $G_\Sigma^{-1}$ can be explicitly written in terms of the support function \cite{Fillastre}. 
In this way, Theorem \ref{thm mink problem} can be interpreted as the existence and uniqueness of solutions to a generalized Dirichlet problem for equation (\ref{eqn: minkowski problem monge-ampere}) with boundary data given by $\ph$. It differs from the standard Dirichlet problem in that the boundary data $\ph$ and the solution $u$ may both take the value $+ \infty$. At the same time, the condition that $\Sigma$ be entire restricts the class of functions $u$ we consider to those that are \emph{gradient surjective} (Definition \ref{defi gradient surjective}).

This problem is made tractable by Theorem \ref{thm gauss image} and its more precise form Theorem \ref{prop constant curvature convex hull2}. These theorems allow us to reduce our generalized Dirichlet problem on $\D$ to a problem on the smaller domain $\Omega_\ph$, on which $u$ is necessarily finite.

%One of the key results of the paper is that the domain of dependence of a surface $\Sigma$ with positive Gaussian curvature in some interval $(a,b)$
%determines the region $\Omega_\Sigma$ where the support function $u$ is defined and the boundary value of $u$.
%In fact $\Omega_\Sigma$ is the convex hull of the set of points where $\ph$ is finite and $u$ coincides with the affine interpolation
%of $\ph$ on $\partial\Omega_\Sigma$, see Theorem \ref{prop constant curvature convex hull2}.

%So the main advantage to consider the support function $u$ instead of the $1$-Lipschitz function $f$ on the plane defining $\Sigma$ as a graph is that in a sense Minkowski problem turns out to be equivalent to a Dirichlet problem for a Monge-Amp\`ere equation. It turns out  that $f$ is the Legendre transform of $u$ (or conversely  that $u$ is the Legendre transform of $f$).

%Todo: isn't the idea just to show that the closure contains the convex hull (by definition \Omega_\ph)? It's obviously open
The idea to prove Theorem \ref{thm gauss image}, or more generally Theorem \ref{prop constant curvature convex hull2},
is to use a barrier argument: if $\Sigma$ is a surface with Gaussian curvature bounded by two positive constants,
then for every boundary chord $c$  of $\Omega_\ph$ we produce a convex surface $\Sigma'$ so that
\begin{itemize}
\item $\Sigma'$ lies above $\Sigma$;
\item the image of the Gauss map of  $\Sigma'$ does not contain any point in the half plane $H$ bounded by  $c$  in the complement of $\Omega_{\ph}$.
\end{itemize}
The first point implies that the image of the Gauss map of $\Sigma$ is contained in the image of the Gauss map of $\Sigma'$ so that the second point shows that
$G_\Sigma(\Sigma)$ does not contain any point in $H$.
%As on the other hand $\overline{G_\Sigma(\Sigma)}$ contains $G(\partial \mathcal D)$ we conclude that $G_\Sigma(\Sigma)$ coincides with the interior of $G(\partial \mathcal D)$.

An important ingredient in the proof is the comparison principle for Monge-Amp\`ere equations. 
However we have to apply the comparison principle to functions that are in general unbounded.
So we need to prove a refined version of the comparison principle
for convex functions that are possibly infinite at some points, which we do in Proposition \ref{prop:genmax}. 
Here the hypothesis that $\Sigma$ is properly embedded plays a key role.

Once we have reduced Theorem \ref{thm mink problem} to a problem on $\Omega_\ph$, we are able to produce a solution. But in order to prove that the corresponding CGC surface is entire, we need another barrier. Specifically, from the point of view of the surfaces themselves we need a lower barrier, or from the dual point of view of support functions we need an upper barrier.
%Also the proof of the existence in Theorem \ref{thm mink problem} is based on a barrier method. 
The general strategy follows the same line as in \cite{Bonsante:2015vi, schoenetal}. However, those papers use upper barriers invariant under a $1$-parameter group, whereas such surfaces can never provide upper barriers for the general class of boundary values $\ph$ that we consider. The support function of a barrier which is invariant under a one-parameter group must have boundary values which are finite on at least an open interval, whereas we consider functions $\ph$ that are finite on as few as three points.
%Let us mention that in those papers
% barriers invariant by a $1$-parameter group are constructed. 
 % However those barrier are not  sufficient to prove Theorem \ref{thm mink problem} in that generality. 
Therefore we construct in Section \ref{sec triangular surfaces} an entire CGC-$K$ for which $\ph$ is finite at exactly three points, i.e. one whose domain of dependence is the intersection of the future of three null planes.
This surface and the refined comparison principle are the key new ingredients to prove Theorem \ref{thm mink problem}.

%More precisely we will point out an entire surface of constant curvature $-1$ which is intriniscally isometric to the interior of an ideal triangle and 
%whose domain of dependence is the intersection of three half spaces bounded by lightlike planes. 
The construction of this particular surface is based on the harmonic maps $f_\pm:\mathbb C\to\mathbb H^2$ with Hopf differential $\pm zdz^2$.
 It is known that the images of $f_\pm$ are open ideal triangles $T_\pm$ \cite{hantamtreibergswan}. The map $F=f_+\circ f_-^{-1}:T_-\to\mathbb H^2$ is minimal Lagrangian
 and one studies the corresponding embedding $\sigma_F:T_-\to\mathbb R^{2,1}$. %The key point is to prove that $\sigma_f$ is a proper embedding.
The embedding data of this surface are explicitly described in terms of the Hopf differential and the holomorphic energy of the harmonic map.
The technical part is to show that the corresponding surface is properly embedded. Using the symmetry of the embedding $\sigma_F$ we reduce the problem to
showing that the image of a line of symmetry is a properly embedded curve in Minkowski space. This is finally proved by studying  the growth of the holomorphic energy
of the harmonic map along the curve and its relation with the principal curvature of this isometric immersion.
Once the barrier (which we will call a \emph{triangular} surface) is fully described, the Minkowski problem is considered. 

For a given lower semicontinuous function $\ph:\partial\mathbb H^2\to\mathbb R\cup\{+\infty\}$ and a bounded continuous %Todo: smooth? Measurable?
 function $\psi$ defined on the interior $\Omega_\ph$ of the convex hull %Remark: changed notation from C to \Omega
of $\ph^{-1}(\mathbb R)$ we construct a function $u$ on $\overline{\Omega_\ph}$ which solves the equation
\begin{equation}\label{eq:monge ampere}
   \det D^2 u (\mathsf{x}) =\frac{1}{\psi(\mathsf{x})}(1-|\mathsf x|^2)^{-2}
\end{equation}
and is the linear interpolation of $\ph$ on the boundary of $\Omega_\ph$.
To the end we consider the convex envelope $\mathrm{conv}(\ph)$  of $\ph$. Taking an interior approximation $\Omega_n$ of $\Omega_\ph$ by convex domains, we consider the solution $u_n$ 
of  the equation \eqref{eq:monge ampere} 
on $\Omega_n$  with boundary data $\mathrm{conv}(\ph)$. Applying the comparison principle with classical barriers we prove that $u_n$ converges %smoothly
to the solution of the problem. The function $u$ defines a spacelike convex surface $\Sigma$ in Minkowski space, that must be proved to be entire.
More precisely $\Sigma$ is part of an entire achronal surface, which however could contain some additional regions which are not strictly convex. 
 The problem reduces to showing that $\overline\Sigma$ does not meet any plane $P$ whose normal vector lies in $\partial \Omega_\ph$. %corresponding to the point $(\mathsf(x), \ph(\mathsf{x}))\in\partial\Hyp^2\times\mathbb R$.
 In fact for such a $P$ 
 we will show that there is a triangular surface which separates $\Sigma$ from $P$. The proof of this fact is based again on the comparison principle.

% The construction of this particular surface is based on the harmonic maps $\phi_\pm:\mathbb C\to\mathbb H^2$ with Hopf differential $\pm zdz^2$.
 %It is known that the images of $\phi_\pm$ are open triangles $T_\pm$ \cite{}. The map $f=\phi_+\circ \phi_-^{-1}:T_-\to\mathbb H^2$ is minimal Lagrangian
% and one studies the corresponding immersion $\sigma_f:T_-\to\mathbb R^{2,1}$. The key point is to prove that $\sigma_f$ is a proper embedding.
% Here we use the symmetries of the surface and some deep estimate on the holomorphic energy of the maps $\phi_\pm$ that gives some control on
 %the \emph{bending growth} of  $\sigma_f$ along lines that ends on some edge of $T$.

\subsection*{Organization of the paper} \addtocontents{toc}{\SkipTocEntry}

% %%%Made some CMC removal changes here
% In the first part of the paper we will introduce the general setup, and prove some general theorems for which we could not find references.
% The second part is dedicated to CGC surfaces, and more generally to surfaces with pinched Gaussian curvature.
%  It is the heart of the paper: we will prove the main results, Theorems \ref{thm-CGC-problem-2}, \ref{thm gauss image}, \ref{thm mink problem}.
% %Finally in the third part we will focus on CMC surfaces. We will explain details of the correspondence between CMC surfaces and CGC surfaces.
% %The goal of the section is to prove Theorem \ref{thm CMC problem}.

Sections \ref{sec:splk}, \ref{analytical sec}, and \ref{sec monge ampere} contain preliminaries as well as proofs of some general theorems for which we could not find references. In Section \ref{sec:splk} we quickly review the theory of spacelike surfaces in Minkowski space. First and second fundamental forms are introduced
and the relevant Gauss-Codazzi equations explained. We show that properly embedded spacelike surfaces are graph, and introduce the notion of domain 
of dependence. We will see that aside from few exceptions  the domain of dependence is a regular domain.
%Subsection \ref{subsec mink problem} is devoted to the study  of convex spacelike surfaces. 
In Section \ref{analytical sec}, the notion of support function is given and the relation between the boundary
value of the support function and the domain of dependence of the surface is  pointed out. The relation between curvature of the surface and the support function
is described, and Minkowski problem is shown to be equivalent to a Dirichlet problem for a Monge-Amp\`ere equation.
In Section \ref{sec monge ampere} 
we describe the analytical tools we will need to solve our problem. Classical results of stability for solutions of Monge-Amp\`ere equations are
given and a refined version of the comparison principle for unbounded functions is proved.
%That is the main new result proved in Part \ref{part:1}. 
%We also add short proofs also for few other facts for which no reference is available 
%(even though they are probably known by experts).

The remainder of the paper contains our main results on CGC surfaces. In Section \ref{sec image of gauss map and support function on boundary} we prove Theorem \ref{thm gauss image}. First we study some special CGC surfaces whose domain of dependence is the future
of a spacelike half-line in Minkowski space. Those surfaces and our comparison principle  will be the key ingredients to prove Theorem \ref{thm gauss image}.
In Section \ref{sec triangular surfaces} we study the triangular  surfaces. First we construct the embedding data of a CGC immersion 
on $\mathbb C$ by means of a correspondence with harmonic maps and minimal Lagrangian maps. Then we prove that 
this immersion is a proper embedding. Section \ref{sec existence and uniqueness} is devoted to solving the Minkowski problem. As an application we will prove in
 Section \ref{sec:foliation}  that any regular domain $\mathcal D$ which is not a wedge is foliated by CGC surfaces.
 
 Finally in Section \ref{sec:final} we point out an open question.
 
 \subsection*{Acknowledgements} We thank Jean-Marc Schlenker for his interest and encouragement throughout. The third author also wishes to thank Shing-Tung Yau for inspiring interest in the problem.

\section{Spacelike surfaces in Minkowski space}\label{sec:splk}

Minkowski $(2+1)$-space is the simply connected geodesically complete flat Lorentzian manifold $\R^{2,1}=(\R^3, dx_1^2+dx_2^2 - dx_3^2)$.
%Minkowski space is the vector space $\R^3$ endowed with a constant metric of signature (2,1):
%$$\R^{2,1}:=(\R^3,dx_1^2 + dx_2^2 - dx_3^2)~.$$ 
%Todo: resolve the conflict between $x_1$ and $x_3$.
%Its isometry group is naturally described as 
%$$\isom(\R^{2,1})=\mathrm{O}(2,1)\ltimes \R^{2,1}~,$$
%where $\mathrm{O}(2,1)$ is the group of linear isometries for the quadratic form $\langle\cdot,\cdot\rangle$. {We fix an orientation and a time-orientation on $\R^{2,1}$, and we say that parameterized timelike curves are future-directed if the $x_3$-component is increasing along the parameterization. Then } the connected component of the identity in $\isom(\R^{2,1})$ is the group of orientation-preserving and {future}-preserving isometries, namely:
%$$\isom_0(\R^{2,1})=\SO_0(2,1)\ltimes \R^{2,1}~,$$
%where $\SO_0(2,1)$ is the identity component of $\mathrm{O}(2,1)$.
A nonzero tangent vector $\pmb v$ is called \emph{spacelike, lightlike} or \emph{timelike} if $\langle \pmb v,\pmb v\rangle>0$, $\langle \pmb v,\pmb v\rangle=0$ or $\langle \pmb v,\pmb v\rangle<0$ respectively. 
We also say $\pmb v$ is \emph{causal} if it is either lightlike or timelike, and $\pmb v$ is \emph{achronal} if it is either lightlike or spacelike. A causal vector is either \emph{future-directed} if its $x_3$-component is positive and \emph{past-directed} if its $x_3$-component is negative. 

A point $\pmb p$ is in the \emph{future} of $\pmb q$ (and $\pmb q$ is in the \emph{past} of $\pmb p$) if $\pmb p-\pmb q$ is timelike future-directed. We denote by $I^+(\pmb p)$ (resp. $I^-(\pmb p)$) the open cone of points in the future (resp. past) of $\pmb p$. If $S$ is any set in Minkowski space, we then define the \emph{future} and \emph{past} of $S$ as
$$I^+(S)=\bigcup_{\pmb p\in S}I^+(\pmb p)\qquad\text{and}\qquad I^-(\Sigma)=\bigcup_{\pmb p\in S}I^-(\pmb p) $$
and we say $S$ is future-complete if $I^+(S) \subset S$.

A $C^0$ submanifold $\Sigma$ is causal (resp. achronal) if for each point $\pmb p \in \Sigma$, there is a neighborhood of $\pmb p$ in which every point of $\Sigma$ is causally (resp. achronally) separated from $\pmb p$. For some of the preliminaries we allow immersed surfaces, in which case ``locally'' means locally in the domain; however for the bulk of the paper we are concerned only with entire surfaces, which are necessarily embedded. 
A $C^1$ surface is \emph{spacelike}, \emph{lightlike}, or \emph{timelike} if the induced metric on the tangent space is positive definite, degenerate, or indefinite respectively. If $\Sigma \subset \R^{2,1}$ is a $C^1$ spacelike surface, the \emph{future unit normal vector field} is the unique future-directed vector field $\pmb n$ orthogonal to $\Sigma$ such that $\langle \pmb n,\pmb n\rangle=-1$.

The purpose of the following section is to introduce preliminary geometric notions on spacelike surfaces in $\R^{2,1}$, including the definition of \emph{entire surface} and of \emph{domain of dependence}, and finally state the Minkowski and CGC problems which are the main focus of this paper.
%\sout{which consists in differentiating in each coordinate, and is also the Levi-Civita connection for the Minkowski metric.}

\subsection{Embedding data for spacelike surfaces}

Let us denote by $D$ the flat connection of $\R^3$. For a smoothly immersed spacelike surface $\Sigma$ in $\R^{2,1}$ we recall:
\begin{itemize}
\item The \emph{first fundamental form} $\I$ is the Riemannian metric on $T \Sigma$ given by the restriction of the metric $\langle\cdot,\cdot\rangle$.
\item The \emph{Levi-Civita connection $\nabla$} and \emph{second fundamental form $\II$} are defined on $T\Sigma$ as the tangential and normal components respectively of the connection $D$:
$$D_{\pmb v} \pmb w = \nabla_{\pmb v} \pmb w + \II(\pmb v,\pmb w)\pmb n.$$
\item The \emph{shape operator} $B$ is the self-adjoint endomorphism of $T \Sigma$ given by differentiating the normal vector field $\pmb n$:
$$B(\pmb v)= D_{\pmb v}(\pmb n).$$
\end{itemize}

%For a (smoothly) embedded spacelike surface $\Sigma$, we define:
%\begin{itemize}
%\item The \emph{first fundamental form} as the Riemannian metric $\I$ on $\Sigma$ defined by the restriction of $\langle\cdot,\cdot\rangle$ to $T_p \Sigma$ at every point $p\in\Sigma$;
%\item The \emph{second fundamental form} as the (2,0)-tensor $\II$ on $\Sigma$ defined by
%$$D_{v}\widehat w=\nabla^\I_{v}\widehat w+\II(v,w)N~,$$
%where $\widehat w$ is any vector field extending $w\in T_p\Sigma$ and $\nabla^\I$ is the Levi-Civita connection of $\I$;
%\item The \emph{shape operator} as the (1,1)-tensor $B\in\Gamma(\mathrm{End}(T\Sigma))$
%$$B(v)=D_v N~.$$
%\end{itemize}  

The three objects $\I$, $\II$, and $B$ are related by the \emph{Weingarten equation} $\II(\pmb v,\pmb w)=\I(B(\pmb v),\pmb w)$. The \emph{third fundamental form} $\III$ is defined by $\III(v,w) = \I(B(v),B(w))$.
Moreover, the pair $(\I,B)$ satisfies the \emph{Gauss equation}:
\begin{equation}  \label{eq gauss}
\kappa_I=-\det B~,
\end{equation}
where  $\kappa_I$ is the intrinsic curvature of $\I$, and the \emph{Codazzi equation}:
\begin{equation}  \label{eq codazzi}
d^{\nabla}B=0~,
\end{equation}
where $d^{\nabla}$ is the extension of $\nabla$ to $T\Sigma$-valued differential forms, which is given by the formula (equivalent to the vanishing of the torsion of $\nabla$):
$$d^{\nabla}B(\pmb v,\pmb w)=(\nabla_{\pmb v} B)(\pmb w)-(\nabla_{\pmb w} B)(\pmb  v)~.$$

The Fundamental Theorem of surface theory, in the case of Minkowski space, shows that Equations \eqref{eq gauss} and \eqref{eq codazzi} also provide sufficient conditions to determine, at least for a simply connected surface $\Sigma$, a spacelike immersion into $\R^{2,1}$:

\begin{theorem} \label{fund theorem}
Let $\Sigma$ be a simply connected surface. Given a Riemannian metric $\I$ on $\Sigma$ and a (1,1)-tensor $B\in\Gamma(\mathrm{End}(T\Sigma))$, self-adjoint for $\I$, such that the pair $(\I,B)$ satisfies Equations \eqref{eq gauss} and \eqref{eq codazzi}, there exists a spacelike immersion $\sigma:\Sigma\to\R^{2,1}$ such that the pull-back of the first fundamental form and shape operator of $\sigma(\Sigma)$ coincide with $\I$ and $B$. Moreover, any two such immersions differ by post-composition with a global isometry of $\R^{2,1}$.
\end{theorem}

We define the Gaussian curvature in an extrinsic way:

%Two types of surfaces which will be important for this paper are the surfaces of constant Gaussian curvature and of constant mean curvature:

\begin{defi} \label{defi HKsurface}
%The \emph{mean curvature} of $\Sigma$ is $\tr B/2$ and 
The \emph{Gaussian curvature} of $\Sigma$ is $\det B$. A surface with constant %mean curvature equal to $H$ is called \emph{CMC}-$H$ and a surface with constant 
Gaussian curvature equal to $K$ is called \emph{CGC}-$K$.
\end{defi}

By Gauss' equation \eqref{eq gauss}, $\Sigma$ is a CGC-$K$ surface if and only if the first fundamental form has constant intrinsic curvature $-K$.

\begin{example} \label{example hyperboloid} (See Figure \ref{fig:hyperboloid})
The future sheet of the two-sheeted hyperboloid
$$\mathrm{Hyp}:=\{\pmb x\in\R^{2,1}\,:\,\langle \pmb x,\pmb x\rangle=-1, x_3 > 0\}~,$$
is CGC-1. Since it is simply connected and the first fundamental form $\I$ is a complete hyperbolic metric (i.e. of constant intrinsic curvature $-1$), $\mathrm{Hyp}$ is isometric to the hyperbolic plane $\Hyp^2$. The normal vector of $\mathrm{Hyp}$ at a point $\pmb p$ is $\pmb n(\pmb p)=\pmb p$, hence the shape operator of $\mathrm{Hyp}$ is the identity. When considered as a surface in its own right we will use the notation $\mathrm{Hyp}$ and when viewed as the target of the Gauss map (see below) of any surface, we will refer to it as $\Hyp^2$.
\end{example}
%Todo: \mathrm{Hyp} v. \Hyp
\begin{figure}[htb]
\centering
\includegraphics[height=4.5cm]{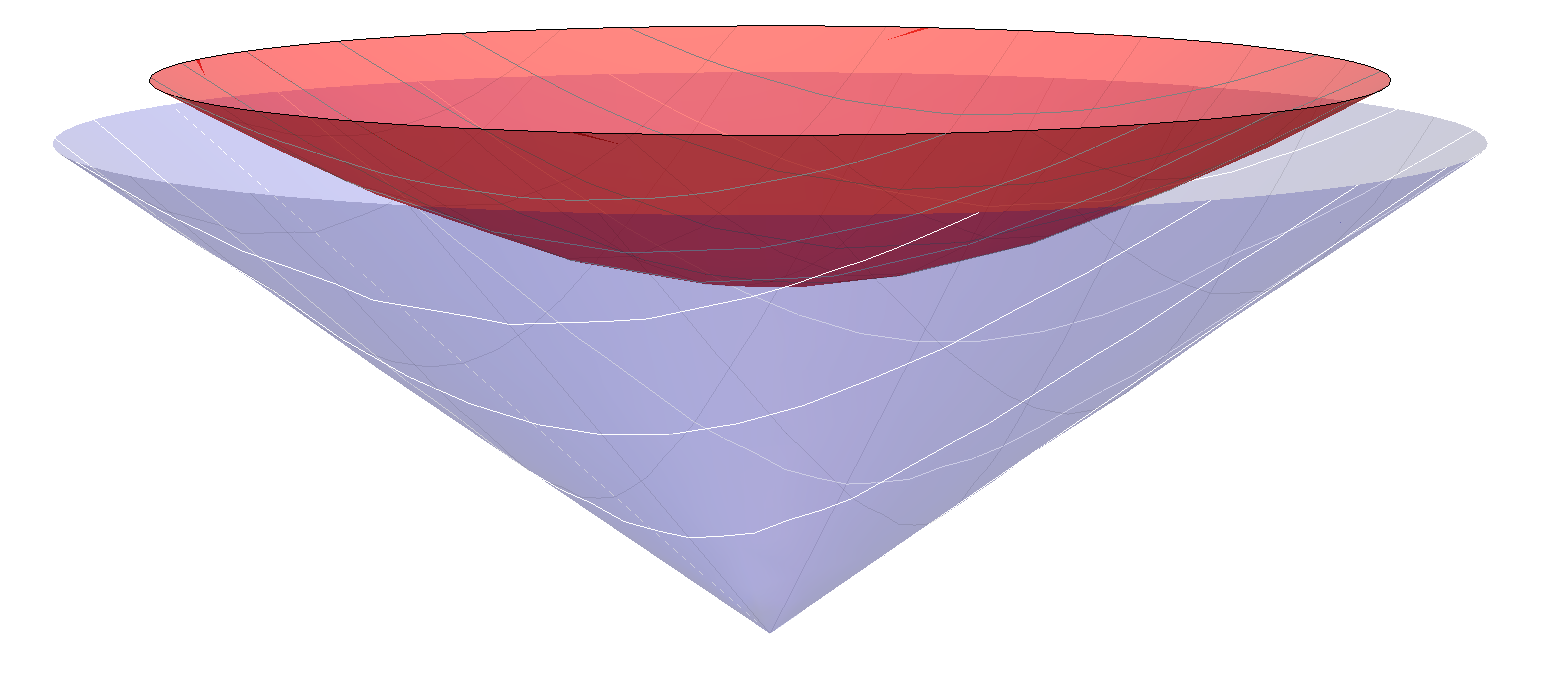}
\caption{The hyperboloid $\mathrm{Hyp}$, whose domain of dependence is the cone $I^+(\pmb 0)$. \label{fig:hyperboloid}}
\end{figure}

\begin{example} \label{ex support function flat}
Define the trough $T$ by
$$T:=\{\pmb x\in\R^{2,1}\,:\,x_2^2-x_3^2=-1,\,x_3>0\}~.$$
It can be described as the cartesian product of a hyperbola $x_2^2-x_3^2=-1$ and a line. The eigenvalues of the shape operator of $T$ are $1$ and 0, so it has zero Gaussian curvature. See Figure \ref{fig:ruled}.
\end{example}

\begin{figure}[htb]
\centering
\includegraphics[height=4.5cm]{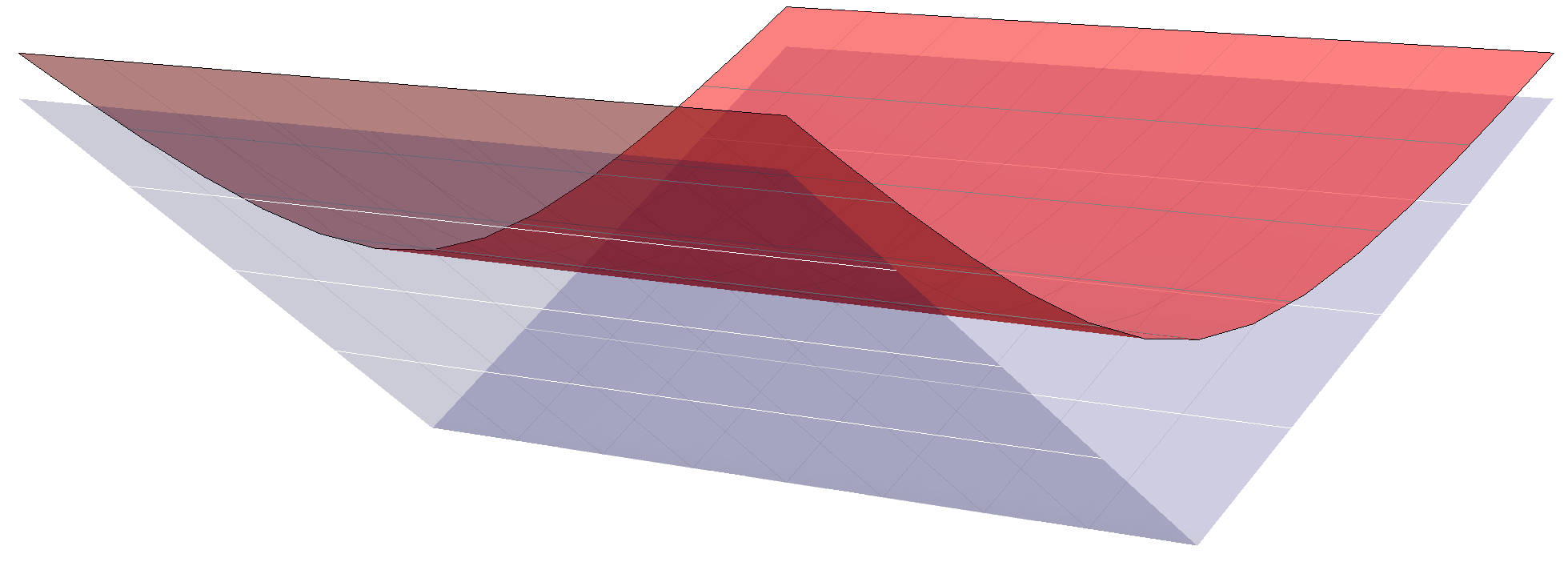}
\caption{The trough $T$, whose domain of dependence is the wedge $I^+(\ell)$. \label{fig:ruled}}
\end{figure}

The \emph{Gauss map} of a $C^1$ spacelike surface $\Sigma$, analogously to the Euclidean case, is the function
$$G_\Sigma:\Sigma\to\Hyp^2~,$$
defined by 
$$G_\Sigma(\pmb p)=\pmb n(\pmb p)~,$$
where $\pmb n$ is the future unit normal vector of $\Sigma$, considered as a point of $\Hyp^2$. Since the shape operator $B$ is the derivative of the Gauss map, the third fundamental form is the pull back under $G_\Sigma$ of the hyperbolic metric $h_{\Hyp^2}$ on $\Hyp^2$. %We will use the notation $\Omega_\Sigma$ to refer to the image of the Gauss map of $\Sigma$; observe that this is an open domain in $\Hyp^2$.
%Todo: at some point, compare the notation \Omega_D, \Omega_\phi, \Omega_\Sigma

\subsection{CGC surfaces and minimal Lagrangian maps}
Let us now explain the relation between surfaces of constant Gaussian curvature and   minimal Lagrangian diffeomorphisms between hyperbolic surfaces.

\begin{defi} \label{defi minimal lagrangian}
Given two hyperbolic surfaces $(S,h)$ and $(S',h')$, a diffeomorphism $F:(S,h)\to (S',h')$ is \emph{minimal Lagrangian} if the unique positive definite $h$-symmetric tensor $b\in\Gamma(\mathrm{End}(TS))$ such that $F^*h'=h(b\cdot,b\cdot)$ satisfies the Codazzi equation $d^{\nabla_h} b=0$, where $\nabla_h$ is the Levi-Civita connection of $h$.
\end{defi}

\begin{remark} \label{remark det = 1}
The tensor $b$ can also be described as the symmetric part of the polar decomposition of the linear map $dF$ with respect to the inner products $h$ and $h'$. If $F$ is a minimal Lagrangian map between hyperbolic surfaces, it follows that $\det b = 1$ \cite{labourieCP}.
\end{remark}

\begin{lemma} \label{lemma min lag}
Given any convex CGC-$K$ surface $\Sigma^K$ in $\R^{2,1}$, with first fundamental form $\I$, the Gauss map of $\Sigma^K$ is a minimal Lagrangian map, when considered as a map:
$$G:\left(\Sigma^K,K\cdot \I\right)\to(\Hyp^2,h_{\Hyp^2})~.$$
\end{lemma}
\begin{proof}
First of all, observe that, by Gauss' equation \eqref{eq gauss} the intrinsic curvature of $\I$ equals $-K$, and therefore the metric $K\cdot \I$ is a hyperbolic metric. Now, let us take $b=(1/\sqrt K)B$. 
The pull-back of the hyperbolic metric of $\Hyp^2$ by the Gauss map is:
\begin{equation} \label{eq pullback Gauss}
G^*h_{\Hyp^2}(v,w)=\III(v,w)=\I(B(v),B(w)) = K \I(b(v),b(w))~,
\end{equation}
where $B= D\pmb n$ is the shape operator of $\Sigma$. Moreover, since $B$ is self-adjoint and Codazzi for $\I$, then it is also self-adjoint and Codazzi for $K\cdot\I$, and so is $b$.
%Finally, 
%$$\det b=\frac{1}{K}\det B=1~,$$
%since $\Sigma^K$ is a $K$-surface.
\end{proof}

\begin{lemma} \label{rmk min lag realizable}
Given a simply connected hyperbolic surface $(S,h)$, possibly not complete, and a minimal Lagrangian local diffeomorphism $F: (S,h) \to \Hyp^2$, there exists an isometric immersion $\sigma:(S,(1/K)\cdot h)\to \R^{2,1}$ with Gauss map equal to $F$.
% Given two simply connected hyperbolic surfaces $(S,h)$ and $(S,h')$ and a minimal Lagrangian diffeomorphism $F:(S,h)\to (S,h')$, there exists an isometric immersion $\sigma:(S,(1/K)\cdot h)\to \R^{2,1}$ such that $G_\sigma\circ F^{-1}:(S,h')\to\Hyp^2$ is a local isometry, where $G_\sigma:(S,h)\to\Hyp^2$ is the Gauss map of the immersion $\sigma$.
\end{lemma}
\begin{proof}
Let $b$ be as in Definition \ref{defi minimal lagrangian}. Then, the proof of Lemma \ref{lemma min lag} suggests the ansatz $((1/K)\cdot h,\sqrt K b)$ for the embedding data of a CGC-$K$ surface. It then follows from Remark \ref{remark det = 1} that the pair $((1/K)\cdot h,\sqrt K b)$ satisfies the equations of Gauss and Codazzi. Hence by Theorem \ref{fund theorem}, there exists an immersion $\sigma$ having $((1/K)\cdot h,\sqrt K b)$ as embedding data. 

Moreover, from the definition of $b$ we have $F^*h_{\Hyp^2}=h(b\cdot,b\cdot)$, while from the same computation as in the proof of Lemma \ref{lemma min lag}, $G_\sigma^*h_{\Hyp^2}=(1/K)\cdot h(\sqrt K b\cdot,\sqrt K b\cdot)=h(b\cdot,b\cdot)$. Hence at each point $\pmb p \in S$, $F$ and $G$ differ by an isometry of $\Hyp^2$ in a neighborhood of $\pmb p$. Since $S$ is connected, this isometry must in fact be constant. By postcomposing $\sigma$ with the corresponding isometry of $\R^{2,1}$, we may take it to be the identity.
\end{proof}

%In the discussion below, we will mostly restrict to the case of CGC-$1$ surfaces. However, using a rescaling argument as in Lemma \ref{lemma min lag}, it is not difficult to formulate the general case $K>0$. 

%From this fact, we can derive an expression for the embedding data of a CGC surface, in terms of the holomorphic energy density and the Hopf differential of the corresponding harmonic maps as in Proposition \ref{prop min lag decomposition}. This is the content of the following lemma. Since we study spacelike surfaces in $\R^{2,1}$, we consider simply connected surfaces, hence we will assume that $(S_0,X_0)$ is biholomorphic either to $\C$ or to the unit disc $\D$, and we will pick a global holomorphic coordinate $z$ on $(S_0,X_0)$.

%\begin{lemma}
%Let $F=f'\circ f^{-1}:(S,h)\to (S,h')$ be a minimal Lagrangian diffeomorphism between two simply connected hyperbolic surfaces, for $f:(S_0,X_0)\to \Hyp^2$ and $f':(S_0,X_0)\to \Hyp^2$ harmonic diffeomorphisms with opposite Hopf differentials, where $(S_0,X_0)=(\C,z)$ or $(\D,z)$. Let us denote the flat metric on $(S_0,X_0)$ by $g=|dz|^2$.
%Then:
%are the embedding data of a surface of constant Gaussian curvature $1$, isometric to $(S,h)$, such that $G_\Sigma\circ F^{-1}:(S,h')\to\Hyp^2$ is a local isometry.
%\end{lemma}
%\begin{proof}

%\end{proof}

\subsection{Entire spacelike surfaces}

In this paper, we will study \emph{entire} embedded spacelike surfaces. Let us introduce this notion.

\begin{defi}
An achronal surface in $\R^{2,1}$ is entire if  $\pi|_{\Sigma}:\Sigma\to\R^2$ is a homeomorphism, where $\pi:\R^{2,1}\to\R^2$ is the vertical projection $\pi(x_1,x_2,x_3)=(x_1,x_2)$.
\end{defi}

Entire achronal surfaces are exactly the graphs of 1-Lipschitz functions on $\R^2$. Entire spacelike surfaces are exactly the graphs of $C^1$ and strictly 1-Lipschitz functions on $\R^2$. Clearly an entire surface is properly immersed. The following elementary proposition says that the converse is true as well.

\begin{prop} \label{prop properly emb iff entire}
Every properly immersed achronal surface in $\R^{2,1}$ is entire.
%An achronal surface in $\R^{2,1}$ is properly embedded if and only if it is entire.
\end{prop}

\begin{proof}

Let $\Sigma$ be a properly immersed achronal surface. By the achronal condition, the projection $\pi: \Sigma \to \R^2$ is a local homeomorphism. We now prove that $\pi$ has the path lifting property: given a point $\pmb p$ is $\Sigma$ and a curve $\gamma: [0,1] \to \R^2$ with $\gamma(0) = \pi(\pmb p)$, there is a lift $\tilde{\gamma}: [0,1] \to \Sigma$ with $\pi \circ \tilde{\gamma} = \gamma$. Let $\gamma:[0,1] \to \R^2$ be such a curve. Since $\pi$ is a local homeomorphism, the path $\gamma$ can be lifted to an open neighborhood. Since $\Sigma$ is achronal, the length of any partial lift $\tilde{\gamma}$ measured using the Euclidean metric on $\R^3$ is at most $\sqrt{2}$ times the length of $\gamma$ in $\R^2$. Since the immersion is proper, the induced \emph{Euclidean} metric on $\Sigma$ is complete. As a consequence, the partial lift of $\gamma$ can also be extended to all limit points. Therefore the interval on which we can lift $\gamma$ is both open and closed, so it is the entire interval $[0,1]$. 

We have shown that $\pi$ is a local homeomorphism with the path lifting property, so it is a covering map \cite[p. 383]{Carmo:1976aa}. But the image $\R^2$ is simply connected, so $\pi$ must be a homeomorphism.
\end{proof}

%One direction is easy: if $\Sigma$ is entire, ie the projection $\pi:\Sigma\to\R^2$ is a homeomorphism, then it certainly cannot be immersed and must i fact be properly embedded.
%Let now $\Sigma$ be a properly embedded spacelike surface. Since it is properly embedded, it is the boundary of a domain $U$, which we may choose to be the past of $\Sigma$. Then any future-directed timelike curve intersecting $\Sigma$ must be leaving $U$, so each timelike curve can intersect $\Sigma$ at most once. Applying this observation to vertical lines, we see that the projection to the $x_1x_2$-plane must be injective. 
%Moreover, if $\pmb p$ is any point on $\Sigma$, then the entire future cone $I^+(\pmb p)$ must lie outside of $U$ and the entire past cone of $\pmb p$ must line inside $U$, since otherwise one could find a future-directed timelike curve entering $U$. Since every vertical line starts in the past cone of $\pmb p$ and ends in the future cone of $\pmb p$, this shows that the projection to the $x_1x_2$-plane is surjective as well. By invariance of domain, it is a homeomorphism.

\begin{remark}
Proposition \ref{prop properly emb iff entire} shows that the condition of being  entire is preserved by isometries of $\R^{2,1}$. In other words, if $\pi|_{\Sigma}$ is a homeomorphism, then the orthogonal projection from $\Sigma$ to \emph{any} spacelike plane is a homeomorphism.
\end{remark}

\begin{remark} \label{rmk complete implies entire}
The projection $\pi$ is distance non-decreasing. Therefore, if the first fundamental form of a spacelike surface $\Sigma$ is a complete Riemannian metric, then $\Sigma$ is necessarily entire. The converse is false; a counterexample will be provided by the entire surface studied in Section \ref{sec triangular surfaces}, whose fundamental form is isometric to an ideal triangle in $\Hyp^2$. See also \cite[Appendix A]{Bonsante:2015vi} for another counterexample.
\end{remark}

\subsection{Domains of dependence}
Recall that a continuous curve $\gamma:I\to\R^{2,1}$ is called \emph{causal} if for all pairs of points $t,s \in I$, the images $\gamma(t)$ and $\gamma(s)$ differ by a lightlike or timelike vector.

\begin{defi} \label{def domain of dependence}
Given a spacelike surface $\Sigma$ in $\R^{2,1}$, the \emph{domain of dependence} $\mathcal D_\Sigma$ of $\Sigma$, 
is the set of all points $\pmb p\in\R^{2,1}$ such that every inextendable causal curve through $\pmb p$ intersects $\Sigma$.
\end{defi}

Let us provide the following description of domains of dependence for entire spacelike surfaces. We say that a half-space is \emph{null} if it is bounded by a lightlike plane. An open null half-space is equal either to the future or to the past of its boundary plane. 

\begin{lemma} \label{lemma domains dependence} 
If $\Sigma$ is an entire spacelike surface in $\R^{2,1}$, then its domain of dependence $\mathcal D_\Sigma$ is open, and is equal to the intersection of the open null half-spaces containing $\Sigma$. Moreover, exactly one of the following holds:
\begin{enumerate}
\item $\mathcal D_\Sigma = \R^{2,1}$;
\item $\mathcal D_\Sigma = I^+(Q) \cap I^-(P)$ where $Q$ and $P$ are parallel null planes, with $P$ lying in the future of $Q$;
\item $\mathcal D_\Sigma=\bigcap_{Q\in\mathcal F} I^+(Q)$ where $\mathcal F$ is a nonempty family of null planes; or
\item $\mathcal D_\Sigma=\bigcap_{Q\in\mathcal F} I^-(Q)$ where $\mathcal F$ is a nonempty family of null planes.
\end{enumerate}
\end{lemma}

\begin{proof}[Proof of Lemma \ref{lemma domains dependence}]
We divide the proof into several steps.
\begin{steps}
\item \emph{We first prove that $\mathcal{D}_\Sigma$ is open.} Let $\mathrm{Caus}_{\R^{2,1}}$ be the space of all inextendable causal curves in $\R^{2,1}$, with the topology of local uniform convergence. Since every such curve is the graph of a 1-Lipschitz function from $\R$ to $\R^2$, the Arzel\`a-Ascoli theorem implies that for any compact set $K \in \R^{2,1}$, the subset $\mathrm{Caus}_K$ of such curves intersecting $K$ is compact. We now show that the set $\mathrm{Caus}_\Sigma$ of such curves intersecting $\Sigma$ is open. Suppose $\gamma \in \mathrm{Caus}_{\R^{2,1}}$ intersects $\Sigma$ at a point $\pmb p$. Since $\Sigma$ is spacelike, a small circle in $\Sigma$ around $\pmb p$ must be at least some fixed Euclidean distance from the light cone of $\pmb p$. Perturbing $\gamma$ by less than this distance, it must still pass through the circle and hence intersect $\Sigma$.

To complete the proof that $\mathcal{D}_\Sigma$ is open, for any $\pmb p \in \mathcal{D}_\Sigma$, let $K_n$ be a sequence of compact neighborhoods of $\pmb p$ in $\R^{2,1}$ whose intersection is $\pmb p$. Then $\mathrm{Caus}_{\pmb p} = \bigcap_n \mathrm{Caus}_{K_n} \subset \mathrm{Caus}_\Sigma$. Since $\mathrm{Caus}_{K_n}$ are compact and $\mathrm{Caus}_\Sigma$ is open it follows that for $n$ sufficiently large, $\mathrm{Caus}_{K_n} \subset \mathrm{Caus}_\Sigma$ whence $K_n \subset \mathcal{D}_\Sigma$.

\item \emph{We show that every open null half-space containing $\Sigma$ also contains $\mathcal{D}_\Sigma$.} Let $H$ be an open null half-space containing $\Sigma$. For any point $\pmb p\notin H$, the null line through $\pmb p$ parallel to the boundary of $H$ lies entirely outside of $H$. Since $\Sigma$ is contained in $H$, this line exhibits a causal curve containing $\pmb p$ which does not meet $\Sigma$, showing that $\pmb p \notin \mathcal{D}_\Sigma$. Therefore $\mathcal{D}_\Sigma \subset H$.

\item
\emph{We prove that $\mathcal{D}_\Sigma$ is the intersection of the open null half-spaces containing $\Sigma$}. By Step 2, $\mathcal{D}_\Sigma$ is contained in this intersection. Now we simply need to show that if $\pmb p \notin \mathcal{D}_\Sigma$, we can find a closed null half space containing $\pmb p$ but not $\Sigma$. 

Let $\pmb p$ be a point not in $\mathcal{D}_\Sigma$. Since $\Sigma \subset \mathcal{D}_\Sigma$, $\pmb p$ is either in the past or the future of $\Sigma$. If $\pmb p$ is in the past of $\Sigma$, then any point in $\overline{I^-(\pmb p)}$ can be connected to $\pmb p$ by a causal geodesic which does not meet $\Sigma$. Hence if one can ``escape'' $\Sigma$ from $\pmb p$, one can also escape $\Sigma$ from any point in the past of $\pmb p$, so all of $\overline{I^-(\pmb p)}$ must be outside of $\mathcal{D}_\Sigma$. Similarly, if $\pmb p \in I^+(\Sigma)$, then all of $\overline{I^+(\pmb p)}$ must lie outside of $\mathcal{D}_\Sigma$. 

Up to time reversal, we may assume that $\pmb p \in I^-(\Sigma)$. Let $\pmb q$ be a point in $\overline{I^+(\pmb p)}$ which is still below $\Sigma$ but is contained in the boundary of $\mathcal{D}_\Sigma$ (it may be that $\pmb q = \pmb p$). Since $\pmb q \notin \mathcal{D}_\Sigma$, there is an inextendable causal curve $\gamma$ containing $\pmb q$ which does not intersect $\Sigma$. We first show that the part $\gamma^+$ of $\gamma$ in the closed future of $\pmb q$ must be a null geodesic ray. Otherwise, it would contain a point $\pmb r$ which was timelike separated from $\pmb q$, and so by the previous paragraph, $\overline{I^-(\pmb r)}$ would be disjoint from $\mathcal{D}_\Sigma$. But $I^-(\pmb r)$ contains an open neighborhood of $\pmb q$, which contradicts $\pmb q \in \partial \mathcal{D}_\Sigma$.

Let $H = I^-(\gamma^+)$. This is the unique open past-complete null half-space containing $\gamma^+$ in its boundary. By the same reasoning as above, $H$ cannot intersect $\mathcal{D}_\Sigma$, and since $\mathcal{D}_\Sigma$ is open, neither can $\overline{H}$. But $\pmb q$ and $\pmb p$ are both in $\overline{H}$, which completes the proof.

\item
\emph{We prove that exactly one of the four options must hold.} It is enough to observe that if $\mathcal{D}_\Sigma$ is contained in the intersection of a past-complete null half-space $H^-$ and a future-complete null half-space $H^+$ then the boundaries of $H^+$ and $H^-$ must be parallel. Otherwise, the projection of $\mathcal{D}_\Sigma$ to $\R^2$ could not be surjective, but it must be since $\Sigma \subset \mathcal{D}_\Sigma$ and $\Sigma$ is entire. \qedhere
\end{steps}
\end{proof}

We have the following definition of future-complete domains and future-convex spacelike surfaces.
%A domain $\mathcal D\subseteq \R^{2,1}$ is called \emph{future-complete} if for every $\pmb p\in\mathcal D$, $I^+(\pmb p)\subseteq \mathcal D$.
\begin{defi} An entire achronal surface $\Sigma$ is called \emph{future-convex} (resp. \emph{strictly future-convex}) if $I^+(\Sigma)$ is future-complete and convex (resp. strictly convex).
\end{defi}

\begin{remark}
The condition that a $C^2$ entire spacelike surface $\Sigma$ is future-convex is equivalent to the fact that the shape operator $B= D\pmb n$ (where $\pmb n$ is the \emph{future} unit normal vector field) is positive semi-definite. Hence these are surfaces having non-negative mean curvature and Gaussian curvature, namely $\tr B\geq 0$ and $\det B\geq 0$. 
%In this paper we will provide a classification of entire CMC-$H$ surfaces and CGC-$K$ surfaces, for $H>0$ and $K>0$. 
\end{remark}

From Lemma \ref{lemma domains dependence}, we therefore have the following characterization of domains of dependence of future-convex entire surfaces:

\begin{cor} \label{cor domains dependence}
If $\Sigma$ is a future-convex entire spacelike surface in $\R^{2,1}$, then
 $\mathcal D_\Sigma$ is a convex open domain of the form
$$\mathcal D_\Sigma=\bigcap_{Q\in\mathcal F} I^+(Q)\qquad~,$$
where $\mathcal F$ is a (possibly empty) family of null planes. We can take $\mathcal{F}$ to be the family of all null planes containing $\Sigma$ in their future. 
\end{cor}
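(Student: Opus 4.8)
The plan is to obtain the Corollary as a short consequence of Lemma \ref{lemma domains dependence}. That Lemma already tells us that $\mathcal D_\Sigma$ is open, equals the intersection of \emph{all} open null half-spaces containing $\Sigma$, and falls into one of four types. Since each future null half-space $I^+(Q)$ is convex, the whole statement will follow once we rule out the possibility that an open null half-space containing $\Sigma$ is of ``past type'' $I^-(Q)$ — equivalently, once we exclude types (2) and (4) of Lemma \ref{lemma domains dependence}. Granting this, every open null half-space containing $\Sigma$ has the form $I^+(Q)$, so the Lemma gives $\mathcal D_\Sigma=\bigcap\{I^+(Q): Q \text{ null plane},\ \Sigma\subset I^+(Q)\}$; this is exactly the asserted description, with $\mathcal F$ taken to be the family of all such planes (empty precisely when $\mathcal D_\Sigma=\R^{2,1}$), and $\mathcal D_\Sigma$ is then convex and open as an intersection of convex open sets.

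So the one real step is the claim that a future-convex entire spacelike surface is contained in no open past null half-space. To prove it I would write $\Sigma$ as the graph of $f\in C^1(\R^2)$; here $f$ is \emph{convex}, because $\Sigma$ is future-convex and $I^+(\Sigma)$ is precisely the open epigraph of $f$, and $f$ is \emph{strictly $1$-Lipschitz}, because $\Sigma$ is spacelike and entire. An open past null half-space has the form $I^-(Q)=\{(x_1,x_2,x_3):x_3<\ell(x_1,x_2)\}$, where $\ell$ is the affine function with $|\nabla\ell|=1$ whose graph is the null plane $Q$. If $\Sigma\subset I^-(Q)$ then $f<\ell$ on all of $\R^2$, so $g:=\ell-f$ is a strictly positive \emph{concave} function on $\R^2$. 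But a concave function on $\R^2$ bounded below is constant: if $g(a)<g(b)$ then by concavity $g\bigl(a+t(a-b)\bigr)\le g(a)+t\bigl(g(a)-g(b)\bigr)\to-\infty$ as $t\to+\infty$, contradicting the lower bound. Hence $g$ is a positive constant and $f=\ell-g$ is affine with $|\nabla f|=|\nabla\ell|=1$, which contradicts strict $1$-Lipschitzness of $f$ (an affine function with unit gradient realizes the Lipschitz constant along the direction of its gradient). This proves the claim, and with it the Corollary.

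I do not expect a serious obstacle here: Lemma \ref{lemma domains dependence} carries essentially all the weight, and the remaining argument is elementary. The only points that require minor care are the translation between the geometric condition ``$\Sigma\subset I^-(Q)$'' and the analytic one ``$f<\ell$ with $\ell$ affine of unit gradient'' (one must get the correct side of a null plane), and the use of \emph{strict} $1$-Lipschitzness rather than mere $1$-Lipschitzness to close the argument — this is exactly the place where the hypothesis that $\Sigma$ is spacelike, not just achronal, enters. (If one wished to avoid the elementary fact about concave functions, one could instead restrict $f$ and $\ell$ to a line in the direction of $\nabla\ell$ and argue with one-variable convex functions, but the concavity argument above is cleaner.)
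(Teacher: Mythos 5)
Your argument is correct and supplies exactly the step the paper leaves implicit: the paper states this Corollary as an immediate consequence of Lemma \ref{lemma domains dependence} without detailing why cases (2) and (4) of that lemma are excluded. Your observation — that future-convexity makes $f$ convex and strict spacelikeness makes $f$ strictly $1$-Lipschitz, and that a convex strictly $1$-Lipschitz function on $\R^2$ cannot lie below an affine function of unit gradient (else the concave difference, bounded below, would be constant and $f$ would be affine of unit slope) — is the natural way to rule out $\Sigma\subset I^-(Q)$, and is almost certainly what the authors have in mind.
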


\begin{remark}
There is clearly an analogous definition of past-complete domains and past-convex surfaces. Any isometry of $\R^{2,1}$ which is \emph{not} future-preserving exchanges future-complete  domains with past-complete  domains, and future-convex surfaces with past-convex surfaces. For this reason, we will always assume without loss of generality that our surfaces are future-convex with future-complete domains of dependence.
\end{remark}

\begin{example} \label{example hyperboloid2} 
The domain of dependence of the hyperboloid $\mathrm{Hyp}$ of Example \ref{example hyperboloid} is the future cone over the origin, namely:
$$\mathcal D_{\Hyp^2}=I^+(\pmb 0)~.$$
This is the intersection of all the future half-spaces bounded by a null plane through the origin.
\end{example}

\begin{example}\label{rmk wedge}
 The domain of dependence of the trough $T$ of Example \ref{ex support function flat} is the \emph{wedge}:
$$W=I^+(Q^1)\cap I^+(Q^2)~,$$
where $Q^1$ and $Q^2$ are two non-parallel planes, which intersect along a spacelike line $\ell$.
 Namely, $\partial W$ is composed of two null half-planes, both having the same spacelike line $\ell$ as a boundary.  See again Figure \ref{fig:ruled}.
 %Up to isometries, we can assume such line $\ell$ is contained in the plane $\{z=0\}\subset\R^{2,1}$, and thus $\partial W$ is of the form:
%$$\partial W=\{(w,|w_0\cdot w|)\,:\,w\in\C\}~,$$
%for some $w_0\in\C$ such that $|w_0|=1$.
\end{example}

%If the family $\mathcal F$ of Corollary \ref{cor domains dependence} contains at least two planes, then for every point $p$ in the corresponding domain $\mathcal{D}$, the closed past $\overline{I^-(p) \cap D}$ is compact. As a result, for every point $p$ the timelike distance of $p$ to the boundary of $D$

Let us observe that, if the family $\mathcal F$ of Corollary \ref{cor domains dependence} is empty, then $\mathcal D_\Sigma=\R^{2,1}$, while if  $\mathcal F$ contains only one element $Q$, then  $\mathcal D_\Sigma$ is the future of the null plane $Q$.
In Example \ref{rmk wedge}, we can assume $\mathcal F$ is composed of exactly two non-parallel null planes.
We will say that  $\mathcal D_\Sigma$ is a (future-complete) \emph{regular domain} if $\mathcal F$ contains at least two {non-parallel} elements. More precisely:

\begin{defi}
A convex open domain $\mathcal D\subset \R^{2,1}$ is a \emph{regular domain} if
$$\mathcal D=\bigcap_{Q\in\mathcal F} I^+(Q)~,$$
for some family $\mathcal F$ of null planes which contains at least two non-parallel distinct planes. 
%Analogously, $\mathcal D$ is a \emph{past-complete regular domain} if it is of the form
%$$\mathcal D=\bigcap_{Q\in\mathcal F} I^-(Q)~.$$
\end{defi}

\subsection{Minkowski problem in regular domains} \label{subsec mink problem}

With these preliminary remarks in hand, we can formulate more precisely the statement of the problems we consider in this paper.
Let us denote by $\kappa_\Sigma:\Sigma\to\R$ the Gaussian curvature of a spacelike surface $\Sigma$. The Minkowski problem we consider can be stated as follows:

\begin{namedthm}{Minkowski problem}
Given any regular domain $\mathcal D$ in $\R^{2,1}$ and any sufficiently regular function $\psi:\Hyp^2\to\R^{>0}$, does there exist a unique entire surface $\Sigma$  such that
\begin{enumerate}
\item $\psi\circ G_\Sigma=\kappa_\Sigma,$ and
\item $\mathcal D_{\Sigma} = \mathcal D~$?
\end{enumerate}
\end{namedthm}

We will give a positive answer (Theorem \ref{thm mink problem}) to the Minkowski problem, under the assumption that $\mathcal D$ is not a wedge (compare Example \ref{rmk wedge}), which we will show is also a necessary condition.

\begin{remark}
Let us make some remarks on the formulation of the problem.
\begin{enumerate}
\item Consistently with the classical Minkowski problem in Euclidean space, we will consider the Minkowski problem for a prescribed \emph{positive} function $\psi$ on $\Hyp^2$. This implies that a surface $\Sigma$ is strictly convex --- that is, either $I^+(\Sigma)$ or $I^-(\Sigma)$ is a strictly convex domain (with smooth boundary equal to $\Sigma$). 
\item We will give an affirmative answer to the Minkowski problem --- both for the existence and uniqueness part --- under the assumption that 
$$a<\psi<b$$
for some constants $a,b>0$. Without such assumption, the problem appears significantly more complicated, at least with the tools of this paper and of the existing literature.
\item We shall prove in Section \ref{sec image of gauss map and support function on boundary}
 that, for every entire spacelike surface $\Sigma$ with Gaussian curvature bounded from above and below by positive constants (as in the previous point), the image of the Gauss map $G_\Sigma$ coincides with the image of the subdifferential of $\partial\mathcal D_\Sigma$ --- that is, with the set of vectors $\pmb v\in\Hyp^2$ such that $\mathcal D_\Sigma$ admits a support plane orthogonal to $\pmb v$. Hence the function $\psi$ need only be defined on the image of the Gauss map of $\partial\mathcal D_\Sigma$.

\item If $\Sigma$ is a strictly convex smooth entire surface in $\R^{2,1}$, then its Gauss map $G_\Sigma:\Sigma\to\Hyp^2$ is a diffemorphism onto its image.  Hence under our assumptions, the condition of the Minkowski problem can also written as
$$\psi=\kappa_\Sigma\circ G_\Sigma^{-1}~.$$
The function $\kappa_\Sigma\circ G_\Sigma^{-1}$ is also called \emph{curvature function} of $\Sigma$.
\end{enumerate}
\end{remark}

A particular case is obtained when the prescribed curvature function is constant. In Theorem \ref{thm-CGC-problem} we will give a positive answer, under the necessary and sufficient condition that the regular domain $\mathcal D$ is not a wedge, to the following problem:

\begin{namedthm}{CGC problem}
Given any regular domain $\mathcal D$, 
does there exist for every $K>0$ a unique entire CGC-$K$ surface $\Sigma^K$ such that its domain of dependence $\mathcal D_{\Sigma^K}$ is the prescribed regular domain $\mathcal D$?
\end{namedthm}

\section{Analytical formulation} \label{analytical sec}

The purpose of this section is to translate the study of convex surfaces in Minkowski space in analytical terms, with particular focus on the aforementioned Minkowski problem. That is, we introduce the \emph{support function} for convex spacelike surfaces and we express the Minkowski problem in terms of a partial differential equation of Monge-Amp\`{e}re type.

\subsection{Support functions}

It will be convenient to introduce the following definitions from the theory of convex functions:

\begin{defi} \cite{rockafellar} \label{def closed covex}
A function $\R^n \to \R \cup \{\pm \infty\}$ is called \emph{convex}, resp. \emph{closed}, if its supergraph $\{(\mathsf x, z) \subset \R^n \times \R\, |\, f(\mathsf x) < + \infty \textrm{ and } z \geq f(\mathsf x)\}$ is convex, resp. closed. A function $f$ is \emph{proper} if $f(\mathsf x) < + \infty$ for at least one $\mathsf x$ and $f(\mathsf x) > -\infty$ for every $\mathsf x$. The \emph{essential support} of a convex function $f$ is the set on which $f$ is finite.
\end{defi}

Except for minor technicalities, we are concerned only with proper functions. However, it is essential that we consider functions which take the value $+ \infty$ at some points, so we will henceforth allow all our functions to be infinite without further ado. Note that a function is closed if and only if it is lower semi-continuous. If $X$ is a subset of $\R^n$, we will say $f$ is a function on $X$ if it is a (proper) function on $\R^n$ with essential support contained in $X$.

In the following definition, we are interested especially in the case where the set $S$ is a future-convex entire spacelike surface and the case where $S$ is a domain of dependence.

\begin{defi} \label{defi support functions} 
Let $S$ be a nonempty subset of $\R^{2,1}$. Then
the \emph{support function} of S is the function $u_S:\overline \D \to\R\cup\{+\infty\}$
defined by 
$$u_S(\mathsf{y})=\sup_{\pmb x\in S}\langle \pmb  x,(\mathsf{y},1)\rangle~,$$
where $\overline\D$ is the closed unit disk in $\R^2$ and $\mathsf{y}=(y_1,y_2)\in \overline\D$.
\end{defi}

Observe that the plane $\{ \pmb x \in \R^{2,1}\, |\, \langle \pmb x, (\mathsf y, 1) \rangle = z\}$ is spacelike for $\mathsf y$ in the interior of the unit disk and null for $\mathsf y$ on the unit circle. In fact, as $\mathsf y$ and $z$ range over $\overline\D \times \R$, this parametrizes all spacelike and null planes in $\R^{2,1}$. Adorned with an appropriate geometric structure, the space of such planes is known in the literature as co-Minkowski space \cite{surveyseppifillastre} or half-pipe geometry \cite{Danciger:2013ab}. Of course, we could just as well think of it as the space of all future-complete half-spaces in $\R^{2,1}$ with spacelike or null boundary.

For our purposes, we are concerned only with the topology and convexity of this space. Recall that if $f$ is a function on $\R^2$ valued in $\R \cup \{+ \infty\}$ and not identically equal to $+\infty$, the Legendre transform of $f$ is the function $f^*: \R^2 \to \R \cup \{+ \infty\}$ defined by
$$f^*(\mathsf{y})=\sup_{\mathsf{x}\in\R^2}(\mathsf{y} \cdot \mathsf{x} -f(\mathsf{x}))~.$$
It follows from the definitions of support function and Legendre transform that, if an achronal $\Sigma$ is the graph of some function $f:\R^2\to\R$, then its support function $u_\Sigma$ equals the Legendre transform $f^*$ restricted to $\overline\D$. Moreover, $f^*$ is $+\infty$ outside $\overline\D$.

\begin{prop}\cite[Cor 12.2.1]{rockafellar} The Legendre transform gives an involutive one-to-one correspondence between proper closed convex functions on $\R^2$.
\end{prop}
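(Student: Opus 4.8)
The plan is to argue at the level of supergraphs, using the separating hyperplane theorem for closed convex subsets of $\R^3$. Since $f^*(\mathsf y)=\sup_{\mathsf x}(\mathsf y\cdot\mathsf x-f(\mathsf x))$ is, for any $f$ not identically $+\infty$, a pointwise supremum of the affine functions $\mathsf y\mapsto\mathsf y\cdot\mathsf x-f(\mathsf x)$ indexed by the points $\mathsf x$ of the essential support of $f$, the conjugate $f^*$ is automatically convex and closed. If $f$ is moreover proper, then $f^*(\mathsf y)\ge\mathsf y\cdot\mathsf x_0-f(\mathsf x_0)>-\infty$ for any $\mathsf x_0$ in the essential support, so $f^*$ never takes the value $-\infty$; and since a proper convex function admits an affine minorant $\mathsf x\mapsto\mathsf q\cdot\mathsf x-c$ (choose $\mathsf x_1$ in the relative interior of the essential support, where the subdifferential of $f$ is nonempty, and take any subgradient), one gets $f^*(\mathsf q)\le c<+\infty$, so $f^*$ is proper as well. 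Thus $f\mapsto f^*$ sends proper closed convex functions on $\R^2$ to proper closed convex functions on $\R^2$, and it remains only to prove the transform is an involution, i.e.\ that $f^{**}=f$.

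First I would record the easy inequality $f^{**}\le f$: the Fenchel inequality $f^*(\mathsf y)+f(\mathsf x)\ge\mathsf y\cdot\mathsf x$, immediate from the definition of $f^*$, gives $f(\mathsf x)\ge\mathsf y\cdot\mathsf x-f^*(\mathsf y)$ for every $\mathsf y$, hence $f(\mathsf x)\ge f^{**}(\mathsf x)$. For the reverse inequality I would fix a point $\mathsf x_0$ and a real number $z_0<f(\mathsf x_0)$, so that $(\mathsf x_0,z_0)$ lies outside the supergraph of $f$, which is a nonempty closed convex set, and separate: there are $(\mathsf p,\beta)\neq 0$ and $\gamma\in\R$ with $\mathsf p\cdot\mathsf x+\beta z>\gamma>\mathsf p\cdot\mathsf x_0+\beta z_0$ for all $(\mathsf x,z)$ in the supergraph. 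Stability of the supergraph under increasing $z$ forces $\beta\ge 0$. If $\beta>0$, dividing the separating inequalities by $\beta$ exhibits an affine function $a$ with $a\le f$ everywhere and $a(\mathsf x_0)>z_0$, and since every affine minorant of $f$ is dominated pointwise by $f^{**}$ we get $f^{**}(\mathsf x_0)\ge a(\mathsf x_0)>z_0$. If $\beta=0$, the separating hyperplane is vertical: $\mathsf p\cdot\mathsf x>\gamma$ on the essential support of $f$ while $\mathsf p\cdot\mathsf x_0<\gamma$; starting from the affine minorant $a_0(\mathsf x)=\mathsf q\cdot\mathsf x-c\le f$ found above, the tilted affine functions $a_t(\mathsf x)=a_0(\mathsf x)-t(\mathsf p\cdot\mathsf x-\gamma)$ still satisfy $a_t\le f$ for every $t\ge 0$ (on the essential support the subtracted term is $\ge 0$, and off it $f=+\infty$), while $a_t(\mathsf x_0)=a_0(\mathsf x_0)+t(\gamma-\mathsf p\cdot\mathsf x_0)\to+\infty$, so $f^{**}(\mathsf x_0)=+\infty$. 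In every case $f^{**}(\mathsf x_0)>z_0$; letting $z_0\uparrow f(\mathsf x_0)$ --- or $z_0\to+\infty$ when $f(\mathsf x_0)=+\infty$ --- gives $f^{**}(\mathsf x_0)\ge f(\mathsf x_0)$, and together with $f^{**}\le f$ this shows $f^{**}=f$. Hence $f\mapsto f^*$ maps the set of proper closed convex functions on $\R^2$ bijectively to itself, being its own inverse.

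I expect the one genuinely delicate point to be the bookkeeping around the value $+\infty$ and the role of closedness. Concretely, one must check that proper convex functions really do possess affine minorants, and that closedness (equivalently lower semicontinuity) is exactly what makes $f^{**}=f$ hold --- for a non-closed convex $f$ one only obtains that $f^{**}$ is the lower semicontinuous hull of $f$; and one must handle the vertical-separation case $\beta=0$ carefully, since it is precisely this case that yields the correct value $f^{**}(\mathsf x_0)=+\infty$ at points $\mathsf x_0$ off the closed essential support. Everything else is a formal unwinding of the definition of the Legendre transform together with the separation theorem for closed convex sets.
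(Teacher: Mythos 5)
Your proof is correct: it is precisely the standard biconjugation (Fenchel--Moreau) argument, and the paper itself offers no proof of this proposition but simply cites \cite[Cor 12.2.1]{rockafellar}, where essentially this same separation-based argument appears. The only ingredient you invoke as known --- nonemptiness of the subdifferential at relative-interior points of the domain, used to produce an affine minorant --- is itself a standard consequence of the separation theorem, so the argument is complete as stated, and your handling of the vertical case $\beta=0$ (which, as one can check, only arises when $f(\mathsf x_0)=+\infty$) is exactly what makes the biconjugate correct off the essential domain.
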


Restricting to functions on the closed disk and associating an entire achronal surface (which is the graph of a 1-Lipschitz function) with the function of which it is a graph, an immediate corollary is the following version of convex duality:

\begin{prop} \label{prop: convex duality}
%For any set $S \subset \R^{2,1}$, the supergraph of the support function $u_S$ is the set $S^* \subset \D \times \R$ consisting of all future-complete and closed half-spaces containing $S$. The set $S^*$ is convex, closed, and upward-closed. This defines a bijective duality between convex, closed, and future-complete sets $S \subset \R^{2,1}$ and convex, closed, and upward-closed subsets $T \subset \D \times \R$.
The Legendre transform gives an involutive bijection between entire convex achronal surfaces and proper closed convex functions on $\overline{\D}$.
\end{prop}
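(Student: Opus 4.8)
The plan is to deduce this from the preceding Proposition \cite[Cor 12.2.1]{rockafellar}, which establishes that the Legendre transform is an involutive bijection on the set of all proper closed convex functions on $\R^2$. The task is therefore purely bookkeeping: identify the two special subclasses involved here --- entire convex achronal surfaces on one side, and proper closed convex functions supported on $\overline{\D}$ on the other --- and check that the Legendre transform restricts to a bijection between them. Concretely, an entire convex achronal surface is by definition the graph of a convex $1$-Lipschitz function $f:\R^2\to\R$; identifying the surface with $f$, we must show (i) that $f\mapsto f^*$ sends such $f$ to proper closed convex functions with essential support in $\overline{\D}$, and (ii) conversely that any proper closed convex function on $\overline{\D}$ is the Legendre transform of such an $f$, i.e. of a finite convex $1$-Lipschitz function on all of $\R^2$.

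First I would record the elementary fact underlying everything: for a convex function $f:\R^2\to\R$ that is finite everywhere, $f$ is $1$-Lipschitz with respect to the Euclidean norm if and only if its subdifferential $\partial f(\mathsf x)$ is contained in $\overline{\D}$ for every $\mathsf x$, which in turn holds if and only if $f^*(\mathsf y)=+\infty$ for every $\mathsf y\notin\overline{\D}$. The forward direction is immediate from the definition $f^*(\mathsf y)=\sup_{\mathsf x}(\mathsf y\cdot\mathsf x-f(\mathsf x))$: if $|\mathsf y|>1$, moving $\mathsf x$ to infinity in the direction of $\mathsf y$ makes $\mathsf y\cdot\mathsf x-f(\mathsf x)$ diverge, since $f$ grows at most linearly with slope $1<|\mathsf y|$. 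The converse is the standard duality between Lipschitz bounds on $f$ and the domain of $f^*$. This shows that $f\mapsto f^*$ maps entire convex achronal surfaces into the set of proper closed convex functions on $\overline{\D}$ --- properness holds because $f$ is finite somewhere (indeed everywhere), so $f^*>-\infty$, and $f^*$ is finite at $\nabla f(\mathsf x)$ for any $\mathsf x$ so it is not identically $+\infty$.

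For the reverse inclusion, start with a proper closed convex function $u$ on $\overline{\D}$, regarded as a proper closed convex function on $\R^2$ with $u\equiv+\infty$ off $\overline{\D}$. By the Rockafellar proposition, $u=f^*$ for a unique proper closed convex $f:\R^2\to\R\cup\{+\infty\}$, and $u^*=f$. I must check $f$ is finite everywhere and $1$-Lipschitz. Finiteness: $f(\mathsf x)=u^*(\mathsf x)=\sup_{\mathsf y\in\overline{\D}}(\mathsf x\cdot\mathsf y-u(\mathsf y))$, and since $\overline{\D}$ is compact and $u$ is closed hence bounded below on $\overline{\D}$ (a proper lower semi-continuous convex function on a compact set attains a finite infimum, or at worst is bounded below away from $-\infty$ by properness combined with the fact that $u$ is not identically $+\infty$ there), the supremum is finite for every $\mathsf x$; and it is $>-\infty$ because $u\not\equiv+\infty$ on $\overline{\D}$. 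The $1$-Lipschitz bound: for $\mathsf y\in\overline{\D}$ the function $\mathsf x\mapsto\mathsf x\cdot\mathsf y-u(\mathsf y)$ is $|\mathsf y|\le 1$-Lipschitz, so their supremum $f$ is $1$-Lipschitz. Thus $f$ is an entire convex achronal surface with $f^*=u$. Combined with the previous paragraph and the injectivity/involutivity already furnished by Rockafellar, this shows the Legendre transform restricts to an involutive bijection between the two classes, as claimed. The only point requiring a moment's care --- and the one I'd flag as the mild technical obstacle --- is the lower bound on a proper closed convex function on the compact set $\overline{\D}$ needed for finiteness of $f$; this is handled by noting that properness rules out the value $-\infty$ and lower semi-continuity on a compact set then yields an attained finite minimum on the essential support, with the boundary behaviour absorbed into the $+\infty$ convention.
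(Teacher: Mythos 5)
Your proposal is correct and follows essentially the same route as the paper, which treats the proposition as an immediate corollary of \cite[Cor 12.2.1]{rockafellar} together with the observation that the Legendre transform exchanges entire $1$-Lipschitz convex functions with proper closed convex functions supported on $\overline{\D}$. You have merely written out the routine verifications (the $1$-Lipschitz $\Leftrightarrow$ support in $\overline{\D}$ duality and the finiteness of $u^*$ via boundedness below of a proper closed convex function on the compact disk), all of which are sound.
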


Now we concentrate on the support function of a regular domain. Recall that a regular domain $\mathcal D$ is an open domain which can be written as the intersection of the futures of a family $\mathcal{F}$ of at least two nonparallel null planes in $\R^{2,1}$. Thinking of $\D \times \R$ as the space of null or spacelike planes in $\R^{2,1}$, we view $\mathcal{F}$ as a subset of $\partial\D \times \R$. The family $\mathcal{F}$ is not unique -- for instance, we may add to the family a null plane parallel to and lying below a plane already in $\mathcal{F}$ without changing the domain $\mathcal D$. However, the union of defining families is still a defining family, so given a regular domain $\mathcal D$ we may consider the maximal family $\mathcal{F}_{\mathcal D}$ of defining planes. Since $\mathcal D$ is assumed to be open, a limit of planes disjoint from $\mathcal D$ is still disjoint from $\mathcal D$, so since $\mathcal{F}_{\mathcal D}$ is maximal it must be closed as a subset of $\partial\D \times \R$. Since it is also upward-closed, $\mathcal{F}_{\mathcal D}$ is the supergraph of a closed function $\ph_{\mathcal D}$ on $\partial\D$. Note that $\ph_{\mathcal D}$ is finite at at least two points because the set $\mathcal{F}_{\mathcal D}$ by assumption contains at least two non-parallel planes. As a consequence we obtain the following proposition:

%A regular domain is determined by the achronal surface at its boundary. Restricting Proposition \cite{convex duality} further to functions with support on $\partial \D$ gives:

\begin{prop} \label{prop:bijection reg dom} The assignment $\mathcal D \mapsto \ph_{\mathcal D}$ is a bijection between the set of regular domains and the set of proper closed functions on the circle which are finite at at least two points.
\end{prop}
We will use the notation $\mathcal D_\ph$ to represent the domain corresponding to $\ph$.

Another important notion of convex geometry is the convex envelope.

%\begin{proof}
%In light of the preceding paragraph, we need only show that every such function $\ph$ comes from a regular domain $D$. Given $\ph$, define $D$ to be the strict supergraph of $\ph^*$, that is the set of $(\mathsf x, x_3) \subset \R^{2,1}$ for which $x_3 > \ph^*(x)$. The function $\ph^*$ is 1-Lipschitz, and therefore continuous, so $D$ is open. And it is the intersection of the futures of the null planes corresponding to points in the supergraph of $\ph$, so indeed $D$ is a regular domain, and $\ph = \ph_D$.
%\end{proof}

\begin{defi}\label{defi convex envelope}
If $f$ is any function on $\R^2$ valued in $\R \cup \{+ \infty\}$, the convex envelope $\mathrm{conv}(f)$ is the function whose supergraph is the closure of the convex hull of the supergraph of $f$. 
\end{defi}
%The next proposition shows that any one of the regular domain $D$, its support function $u_D$, and the function $\ph_D$ determines the other two.

Equivalently \cite[Cor 12.1.1]{rockafellar}, $\mathrm{conv} (f)$ can be equivalently expressed as the supremum of affine functions less than or equal to $f$:
$$\mathrm{conv} (f) (\mathsf x)=\sup\{v(\mathsf x)\,|\,v:\R^2\to\R\text{ is affine, }v\leq f\}~.$$

\begin{prop} \label{prop reg domain conv} Let $\mathcal D$ be a regular domain. Then the support function $u_{\mathcal D}$ is equal to $\mathrm{conv}(\ph_{\mathcal D})$. Moreover $u_{\mathcal D}$ restricted to the unit circle is equal to $\ph_{\mathcal D}$ and if $\ph_{\mathcal D}$ is infinite on an open arc with endpoints $\xi_1$ and $\xi_2$, then $u_{\mathcal D}$ restricted to the chord $[\xi_1, \xi_2]$ is the convex envelope of $\ph_{\mathcal D}|_{\{\xi_1,\xi_2\}}$.  
\end{prop}

Let us write $\ph = \ph_{\mathcal D}$. The last property of $u_{\mathcal D}$ says that $u_{\mathcal D}$ restricted to the open chord $(\xi_1,\xi_2)$ is infinite if either $\ph(\xi_1)$ or $\ph(\xi_2)$ are infinite, and otherwise is the unique affine function interpolating $\ph(\xi_1)$ and $\ph(\xi_2)$. Note that this also implies that the essential support of $\mathrm{conv}(\ph)$ is the convex hull of the essential support of $\ph$.

\begin{proof}
By construction, $\mathcal D$ is the strict supergraph of the Legendre transform $\ph^*$. Since the support function of $\mathcal D$ is the same as the support function of its closure and the support function is the restriction of the Legendre transform to the disk, $u_{\mathcal D} = \ph^{**}$. By \cite[Thm 12.2]{rockafellar}, $\ph^{**} = \mathrm{conv}(\ph)$. 

We now show that as long as $\ph$ is lower semi-continuous, $\mathrm{conv}(\ph)$ restricted to the unit circle is equal to $\ph$. Let $\ph^+$ be the supergraph of $\ph$. By assumption it is closed, and the first thing we need to show is that its convex hull is still closed. According to \cite[Cor 17.2]{rockafellar}, if $S$ is a bounded set of points in $\R^n$, then $\mathrm{cl}(\mathrm{conv}(S)) = \mathrm{conv}(\mathrm{cl}(S))$. We would like to apply this theorem with $S = \ph^+$, but since it is not bounded so we need a slightly generalized theorem. If we include $\R^2 \times \R$ into $\mathbb{RP}^3$, then the union of $\ph^+$ with the point at $z = + \infty$ is still closed, and after a projective transformation it is bounded in $\R^2 \times \R$. Applying the closure theorem to this transformed set and then transforming back, we conclude that the convex hull of $\ph^+$ is closed.

Therefore, the supergraph of $\mathrm{conv}(\ph)$ is actually the convex hull of $\ph^+$, not just its closure. Hence any point in the supergraph of $u_{\mathcal D}$ is a convex linear combination of finitely many points in $\ph^+$. If $\ph$ is supported on only one side of a line $L$, then each point in the graph of $u_{\mathcal D}|_L$ is a convex linear combination of only those points in $\ph^+|_L$. 

Applying this observation to the case where $L$ is tangent to the unit circle, we see that $u_{\mathcal D}$ restricted to the unit circle is equal to $\ph$. Applying the observation to the case where $L$ contains a chord $[\xi_1,\xi_2]$ as in the statement of the proposition, we conclude that $u_{\mathcal D}$ restricted to $[\xi_1,\xi_2]$ is the convex envelope of $\ph|_{\{\xi_1,\xi_2\}}$. 
\end{proof}

\subsection{A Dirichlet-type problem}

In this section, we characterize the support function of an entire future-convex spacelike surface with prescribed Gaussian curvature, and show that the problem of finding such a surface in a given domain of dependence is dual to a Dirichlet-like problem for the support function. 

In the following, let $\Sigma$ be an entire future-convex spacelike surface in $\R^{2,1}$. By Corollary \ref{cor domains dependence}, the domain of dependence of $\Sigma$ is the intersection of the future-complete open null half-spaces containing it. Hence we have the following lemma:

%Recall that by Lemma \ref{lemma domains dependence}, the domain of dependence of any entire spacelike surface is the intersection of the null half-spaces containing it. Since $\Sigma$ is future-convex and is not a null plane, every null half-space containing $\Sigma$ must be future-complete. Hence we have proved the following lemma:

\begin{lemma} Let $\Sigma$ be an entire spacelike future-convex surface with domain of dependence $\mathcal D$. Let $u_\Sigma$ be the support function of $\Sigma$ and let $u_{\mathcal D}$ be the support function of $\mathcal D$. Then $u_\Sigma$ and $u_{\mathcal D}$ coincide on the unit circle.
\end{lemma}

The fact that $\Sigma$ is entire gives another restriction on its support function $u_\Sigma$. In order to describe this condition, we first define the \emph{domain of support} of a proper closed convex function to be the interior of its essential domain, i.e. the largest open set on which $u$ is finite. We will use $\Omega_u$ to denote the domain of support of $u$. Since $u$ is convex, so is its essential domain, which implies that the essential domain of $u$ is either contained in a line or has nonempty interior. Setting the first possibility aside for the moment, assume that $\Omega_u$ is nonempty. The essential domain of $u$ is contained in $\overline{\Omega}_u$ and since the function $u$ is closed its values on the boundary of $\Omega_u$ are uniquely determined by its restriction to $\Omega_u$.

Now we may make the definition:
\begin{defi} \label{defi gradient surjective}
Let $u$ be a proper closed convex function on $\R^2$ such that $\Omega_u$ is nonempty and bounded and $u$ is differentiable throughout $\Omega_u$. The function $u$ is called \emph{gradient surjective} if its gradient map $Du: \Omega_u \to \R^2$ is surjective.

% satisfies, for $\Omega_u$ the domain of support of $u$:
% \begin{enumerate}
% \item $\Omega_u$ is nonempty and bounded;
% \item $u$ is differentiable throughout $\Omega_u$;
% \item $Du: \Omega_u \to \R^2$ is a proper map
% \end{enumerate}
%Let $u$ be a proper closed convex function with domain of support $\Omega_u$. We say $u$ is \emph{gradient surjective} if for every linear function $l$, the infemum $\inf_{\mathsf y \in \R^2} u(\mathsf y) - l(\mathsf y)$ is attained in $\Omega_u$.
\end{defi}

By a special case of \cite[Thm 26.3]{rockafellar}, a  function $u$ is gradient surjective if and only if its Legendre transform $u^*$ is entire and strictly convex. By convex duality (Proposition \ref{prop: convex duality}), this implies:

%If $\Sigma$ is a strictly future-convex spacelike surface, then by convex duality (Proposition \ref{prop: convex duality}) it is the graph of $u_\Sigma^*$, the Legendre transform of its support function. Given any linear function $l(\mathsf y)$, write it in the form $l(\mathsf y) = \langle \mathsf x, \mathsf y \rangle - x_3$ for some point $(\mathsf x, x_3) \in \R^{2,1}$. Then $\inf_{\mathsf y \in \R^2} u_\Sigma(\mathsf y) - l(\mathsf y) = - u_\Sigma^*(\mathsf x)$, and moreover this infemum is attained at the value $\mathsf y$ which represents the slope of the spacelike tangent plane to $\Sigma$ at $(\mathsf x, x_3)$. Since $\Sigma$ is strictly future-convex, you can roll this tangent plane along the surface $\Sigma$ to conclude that all nearby slopes also correspond to tangent planes of $\Sigma$. Therefore, $u_\Sigma$ is still finite in a neighborhood of $\mathsf y$, so $\mathsf y \in \Omega_u$. We conclude:

% and $l(\pmb x)$ is any linear function on $\Sigma$ which attains its minimum at some point $\pmb x$, then any sufficiently small perturbation of $l$ will attain its minimum at a point near $\pmb x$. By convex duality (Proposition \ref{prop: convex duality}), for any linear function $l(\mathsf y)$ and any point $\mathsf y$ at which $\inf_{\mathsf y \in \R^2} u_\Sigma(\mathsf y) - l(\mathsf y)$ is attained, $u_\Sigma$ will still be finite on a neighborhood of $\mathsf y$. This implies:

\begin{lemma}
The support function of a strictly future-convex entire spacelike surface is gradient surjective.
\end{lemma}

Applying a variant of the same theorem \cite[Thm 26.3]{rockafellar} to $u_\Sigma^*$, we also see that if $\Sigma$ is $C^1$ as well as being strictly convex, then the gradient map $Du_\Sigma$ is injective as well, so by invariance of domain it gives a homeomorphism from $\Omega_u$ to $\R^2$. We remark that this gradient is related to the inverse of the Gauss map of $\Sigma$. Namely, let us denote by $\pi:\Hyp^2\to\D$ the radial projection from the hyperboloid to the disc at height one, namely
$$\pi(y_1,y_2,y_3)= \left(\frac{y_1}{y_3},\frac{y_2}{y_3}\right)~,$$
which gives an identification of the hyperboloid model of Example \ref{example hyperboloid} with the Klein model of the hyperbolic plane. Then the composition $\pi \circ G_\Sigma$ of the projection with the Gauss map is inverse to the map 
\begin{equation} \label{eq inverse support function}
\mathsf y \mapsto (Du_\Sigma(\mathsf y), u_\Sigma^*(Du_\Sigma(\mathsf y)))
\end{equation}
as maps between $\Sigma$ and $\Omega_{u_\Sigma}$ \cite[Lemma 2.15]{bonfill}. Moreover, we note that if $\Sigma$ is convex then
\begin{equation} \label{eq support function of convex surface}
u_\Sigma(\pi \circ G_\Sigma(\pmb p)) = \langle \pmb p, (\pi \circ G_\Sigma(\pmb p),1) \rangle
\end{equation}
and if $\Sigma$ is entire then $u_\Sigma(\mathsf y) = + \infty$ if $\mathsf y \notin \overline{\pi \circ G_\Sigma(\Sigma)}$.

We now provide a formula which relates the Gaussian curvature of a $C^2$ strictly convex spacelike surface $\Sigma$ to the support function $u_\Sigma$.

\begin{lemma} \cite{Li}\label{lemma gaussian curvature monge}
Let $u_\Sigma:\D\to\R$ be the support function of a future-convex $C^2$ spacelike embedded surface $\Sigma$ in $\R^{2,1}$.
Then $u_\Sigma$ satisfies
\begin{equation} \label{eq monge ampere curvature}
\det D^2 u_\Sigma(\mathsf x)=\frac{1}{\psi(\mathsf x)}(1-|\mathsf x|^2)^{-2}~.
\end{equation}
for every $\mathsf x \in\Omega_{u_\Sigma}$, where 
$\psi=\kappa_\Sigma \circ (\pi\circ G_\Sigma)^{-1}$ is the curvature function, and $\kappa_\Sigma =\det B$ is the Gaussian curvature of $\Sigma$. 
\end{lemma}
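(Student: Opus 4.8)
The plan is to derive the Monge-Amp\`ere equation \eqref{eq monge ampere curvature} by a direct computation relating the second fundamental form of $\Sigma$ to the Hessian of $u_\Sigma$, using the geometric interpretation of $u_\Sigma$ as the value of the linear functional $\langle\,\cdot\,,(\mathsf y,1)\rangle$ on the support plane of $\Sigma$. Concretely, I would parametrize a neighbourhood of a point $\pmb p\in\Sigma$ by the composition $\pi\circ G_\Sigma$, which by \eqref{eq inverse support function} is a diffeomorphism from $\Sigma$ onto $\Omega_{u_\Sigma}$ with inverse $\mathsf y\mapsto(Du_\Sigma(\mathsf y),u_\Sigma^*(Du_\Sigma(\mathsf y)))$. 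Thus $\Sigma$ itself is recovered from $u_\Sigma$, and I would compute the first and second fundamental forms of this parametrized surface in terms of $u_\Sigma$ and its derivatives.

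The key steps, in order, are as follows. First, write the position vector of $\Sigma$ over $\mathsf x\in\Omega_{u_\Sigma}$ explicitly using \eqref{eq support function of convex surface}: since $\langle\pmb p,(\mathsf x,1)\rangle=u_\Sigma(\mathsf x)$ and the tangent plane to $\Sigma$ at $\pmb p$ is the spacelike plane $\{\langle\pmb q,(\mathsf x,1)\rangle=u_\Sigma(\mathsf x)\}$, differentiating in $\mathsf x$ shows that $\pmb p=\pmb p(\mathsf x)$ has horizontal part determined by $Du_\Sigma(\mathsf x)$ and vertical part by $u_\Sigma(\mathsf x)-\mathsf x\cdot Du_\Sigma(\mathsf x)$; this is just the Legendre-dual description already recorded before the lemma. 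Second, differentiate once more to get the differential of the parametrization, and hence compute the pullback of the flat metric; the cross terms organize so that the first fundamental form $\I$, expressed in the $\mathsf x$-coordinates, is a product of $D^2u_\Sigma(\mathsf x)$ with a factor coming from the Lorentzian signature. Third, identify the shape operator: since the Gauss map of $\Sigma$ is $G_\Sigma=(\pi\circ G_\Sigma)^{-1}$ post-composed with the inverse radial projection to $\mathrm{Hyp}$, and $B$ is the derivative of the unit normal, one finds $B$ in $\mathsf x$-coordinates in terms of the derivative of the map $\mathsf x\mapsto$ (unit normal at $\pmb p(\mathsf x)$), which involves $\mathsf x$ itself and not $D^2u_\Sigma$. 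Fourth, take determinants: the Gauss map pulls the hyperbolic metric back to $\III=\I(B\cdot,B\cdot)$, and computing $\det$ of both sides, together with the explicit conformal factor $(1-|\mathsf x|^2)^{-2}$ of the Klein-model hyperbolic metric under $\pi$, yields $\det B$ in terms of $\det D^2u_\Sigma$ and $(1-|\mathsf x|^2)^{-2}$. Rearranging, and using that $\psi=\kappa_\Sigma\circ(\pi\circ G_\Sigma)^{-1}=\det B\circ(\pi\circ G_\Sigma)^{-1}$, gives exactly \eqref{eq monge ampere curvature}.

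The main obstacle I anticipate is bookkeeping the various factors of $(1-|\mathsf x|^2)$ correctly: the radial projection $\pi$ from $\mathrm{Hyp}$ to the disk at height one distorts the hyperbolic metric by the Klein-model conformal-type factor, the Lorentzian inner product contributes sign and normalization subtleties in passing between the normal vector on $\mathrm{Hyp}$ and its image in $\D$, and one has to be careful that $D^2 u_\Sigma$ is positive definite (equivalently $\Sigma$ is strictly convex, which is exactly the hypothesis that $\psi>0$ makes available via Lemma on gradient surjectivity). An efficient way to sidestep a long direct computation is to use the known correspondence for the hyperboloid in Example \ref{example hyperboloid} as a normalization check and then argue by a first-order reduction at an arbitrary point: after a Lorentz isometry we may assume $\pmb p$ lies on the $x_3$-axis with normal $\pmb n=(0,0,1)$, i.e. $\mathsf x=0$, where the formulas simplify dramatically — $\I=D^2u_\Sigma(0)$, $B=(D^2u_\Sigma(0))^{-1}$ up to the curvature factor — and since \eqref{eq monge ampere curvature} is a pointwise identity this suffices. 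This is essentially the argument of An-Min Li in \cite{Li}, so I would cite that source for the computational details while recording the statement and the geometric setup here.
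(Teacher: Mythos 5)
The paper itself does not prove this lemma; it is attributed directly to \cite{Li}, so there is no internal argument to compare against. Your proposal reconstructs the Li computation, and the overall plan — parametrize $\Sigma$ by $\mathsf x\in\Omega_{u_\Sigma}$ via the Legendre dual, compute $\I$, $B$, $\III$ in these coordinates, and extract $\det B$ — is correct and is indeed the content of the cited reference.

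Two small points. First, the normalization at $\mathsf x=0$ is off: with $\pmb p(\mathsf x)=(Du_\Sigma(\mathsf x),\,Du_\Sigma(\mathsf x)\cdot\mathsf x-u_\Sigma(\mathsf x))$ one finds $\partial_i\pmb p=\bigl((D^2u_\Sigma)_i,(D^2u_\Sigma)_i\cdot\mathsf x\bigr)$, hence $\I_{ij}(0)=(D^2u_\Sigma(0))_i\cdot(D^2u_\Sigma(0))_j$, i.e.\ $\I(0)=(D^2u_\Sigma(0))^2$, not $D^2u_\Sigma(0)$. (The claim $B(0)=(D^2u_\Sigma(0))^{-1}$, which is what actually enters, is correct.) Second, the reduction ``after a Lorentz isometry we may take $\mathsf x=0$'' tacitly uses that the combination $\det D^2u_\Sigma(\mathsf x)\,(1-|\mathsf x|^2)^2$ is Lorentz-equivariant via the projective action on $\D$; this is true but not a free observation, because the support function transforms with a nontrivial conformal factor (cf.\ Equation~\eqref{eq action on support functions}). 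You can sidestep this entirely: at a general $\mathsf x$ one has $\I=H^{T}(\mathrm{Id}-\mathsf x\otimes\mathsf x)H$ with $H=D^2u_\Sigma$, so $\det\I=(\det H)^2(1-|\mathsf x|^2)$; while $\III_{ij}=\langle\partial_i\pmb n,\partial_j\pmb n\rangle$ is exactly the Klein metric with $\det\III=(1-|\mathsf x|^2)^{-3}$. Then $\det\III=\det\I\cdot(\det B)^2$ gives $\det B=(1-|\mathsf x|^2)^{-2}/\det H$ directly, which is the statement.
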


In particular, if $\Sigma$ is a future-convex surface of constant Gaussian curvature $\det B\equiv K>0$ (as in Definition \ref{defi HKsurface}), then {on the image of the Gauss map} $u_\Sigma$ satisfies:
\begin{equation} \label{eq monge constant curvature}
\det D^2 u_\Sigma(\mathsf x)=\frac{1}{K}(1-|\mathsf x|^2)^{-2}~.
\end{equation}

At last we are ready to translate our original problem of prescribed Gaussian curvature into a Dirichlet-like problem for the support function.

%We are now ready to reformulate the Minkowski problem from this analytical point of view.

\begin{defi} \label{def entire solution}
Let $\ph:\partial\D\to\R\cup\{+\infty\}$ be lower semicontinuous and let $\psi:\D\to\R$. We say that a proper closed convex function $u: \D\to \R\cup\{+\infty\}$ is a \emph{solution} of the Minkowski problem with curvature function $\psi$ and boundary data $\ph$ if
\begin{itemize}
\item $u$ is equal to $\ph$ when restricted to $\partial\D$,
\item $u \in C^2(\Omega_u)$ and solves the equation $$\det D^2 u(\mathsf x)=\frac{1}{\psi(\mathsf x)}(1-|\mathsf x|^2)^{-2}~,$$
on the domain $\Omega_u$.
\end{itemize}
\end{defi}

% and let $u$ be a convex function $u:\D\to \R\cup\{+\infty\}$. We say:
%\begin{itemize}
%\item $u$ is a \emph{solution} of the Minkowski problem with curvature function $\psi$ and boundary data $\ph$ if
%$$\det D^2 u(\mathsf z)=\frac{1}{\psi(\mathsf z)}(1-|\mathsf z|^2)^{-2}~,$$
%and the lower semicontinuous regularization of $u$ coincides with $\ph$ on $\partial\D$.
%\item $u$ is \emph{entire} if the gradient mapping 
%$$Du:\{\mathsf z\,:\,u(\mathsf z)<+\infty\}\to\R^2$$
% is surjective.
% \end{itemize}
%\end{defi}

With this definition, we obtain an equivalent formulation of the Minkowski problem, as stated in Section \ref{subsec mink problem}:

\begin{prop}
Given any $\ph:\partial\D\to\R\cup\{+\infty\}$ lower semicontinuous and finite at at least 3 points, and any $\psi:\D\to\R$ smooth, $u$ is a gradient-surjective solution of the Minkowski problem with data $\ph$ and $\psi$ if and only if $u$ is the support function of an entire spacelike surface $\Sigma$ such that $\mathcal D_\Sigma=\mathcal D_\ph$ and $\psi=\kappa_\Sigma\circ (\pi\circ G_\Sigma)^{-1}$. 
\end{prop}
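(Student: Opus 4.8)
The plan is to prove the equivalence by unwinding the convex–duality dictionary set up in this section and matching the data clause by clause. On the geometric side we have an entire spacelike surface $\Sigma$; on the analytic side its support function $u_\Sigma$, which by Proposition~\ref{prop: convex duality} is the Legendre transform of the function whose graph is $\Sigma$ and determines $\Sigma$ uniquely. The three conditions defining a gradient-surjective solution — the boundary values $u|_{\partial\D}=\ph$, the equation $\det D^2 u=\psi^{-1}(1-|\mathsf x|^2)^{-2}$ on $\Omega_u$, and gradient-surjectivity — should correspond under this bijection to the three conditions on the surface side — $\mathcal D_\Sigma=\mathcal D_\ph$, the prescribed curvature function $\psi=\kappa_\Sigma\circ(\pi\circ G_\Sigma)^{-1}$, and $\Sigma$ entire — and I would check each correspondence separately, in both directions.

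For the implication from a gradient-surjective solution $u$ to a surface, I would set $\Sigma:=\mathrm{graph}(u^*)$ and argue: (i) $u\in C^2(\Omega_u)$ is convex, so $D^2u\geq 0$, and the Monge-Amp\`ere equation forces $\psi>0$ and hence $D^2u>0$ on $\Omega_u$; thus $u$ is strictly convex there, $Du\colon\Omega_u\to\R^2$ is injective, gradient-surjectivity makes it onto, and the inverse function theorem makes it a $C^1$ diffeomorphism, so $u^*$ is $C^2$ on all of $\R^2$ with $Du^*=(Du)^{-1}$ valued in $\Omega_u\subset\D$; hence $|Du^*|<1$ and $\Sigma$ is a $C^2$ \emph{spacelike} entire surface, in fact strictly future-convex because $u^*$ is strictly convex; (ii) $u_\Sigma=u^{**}=u$ by convex duality; (iii) by the lemma that $u_\Sigma$ and $u_{\mathcal D_\Sigma}$ agree on $\partial\D$ together with Proposition~\ref{prop reg domain conv}, $\ph_{\mathcal D_\Sigma}=u_\Sigma|_{\partial\D}=\ph$, and since $\ph$ is finite at (at least two) points $\mathcal D_\Sigma$ has at least two non-parallel null support planes and so is a regular domain, so Proposition~\ref{prop:bijection reg dom} gives $\mathcal D_\Sigma=\mathcal D_\ph$; (iv) applying Lemma~\ref{lemma gaussian curvature monge} to $\Sigma$ and comparing with the equation satisfied by $u$, and recalling from~\eqref{eq inverse support function} that $\pi\circ G_\Sigma$ is a diffeomorphism from $\Sigma$ onto $\Omega_u=\Omega_{u_\Sigma}$, yields $\psi=\kappa_\Sigma\circ(\pi\circ G_\Sigma)^{-1}$ on the image of the Gauss map. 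For the converse I would start from $u=u_\Sigma$ with $\mathcal D_\Sigma=\mathcal D_\ph$ and $\psi=\kappa_\Sigma\circ(\pi\circ G_\Sigma)^{-1}$; the very fact that $\psi$ is a well-defined (positive, smooth) curvature function forces $\Sigma$ to be strictly future-convex and $C^2$, so the lemma on gradient-surjectivity of support functions gives that $u$ is gradient surjective, the same two propositions as in (iii) give $u|_{\partial\D}=u_{\mathcal D_\ph}|_{\partial\D}=\ph$, and Lemma~\ref{lemma gaussian curvature monge} with the hypothesis on $\psi$ gives $u\in C^2(\Omega_u)$ solving the Monge-Amp\`ere equation there.

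The hard part will be step (i): upgrading the abstract convex-duality correspondence, which only sees $1$-Lipschitz (achronal) graphs and closed convex functions, to the statement that $u^*$ is genuinely $C^2$ with $|Du^*|<1$, so that $\Sigma$ really is a $C^2$ \emph{spacelike} surface to which Lemma~\ref{lemma gaussian curvature monge} and the relation~\eqref{eq inverse support function} apply. This is exactly where the two extra hypotheses built into the notion of a gradient-surjective solution — $u\in C^2(\Omega_u)$ and surjectivity of $Du$ — do their work, via the inverse function theorem. A second, smaller subtlety is the bookkeeping around the value $+\infty$: in order to apply Proposition~\ref{prop:bijection reg dom} one must exclude the degenerate domains of dependence ($\R^{2,1}$ and the future of a single null plane), which is precisely the role of the hypothesis that $\ph$ is finite at several points. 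Beyond these two points the argument is a routine assembly of Propositions~\ref{prop: convex duality}, \ref{prop:bijection reg dom}, \ref{prop reg domain conv}, Lemma~\ref{lemma gaussian curvature monge}, and the identity~\eqref{eq inverse support function}.
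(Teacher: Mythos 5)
The paper states this proposition without an explicit proof; it is presented as an immediate consequence of the machinery assembled in the preceding pages. Your write-up correctly supplies that verification, and it uses exactly the tools the paper intends: Proposition~\ref{prop: convex duality} for the Legendre duality between support functions and entire convex achronal graphs, Proposition~\ref{prop:bijection reg dom} and Proposition~\ref{prop reg domain conv} for the matching of boundary data with the defining function of the domain of dependence, the unnamed lemma that $u_\Sigma$ and $u_{\mathcal D_\Sigma}$ agree on $\partial\D$, the characterization of gradient surjectivity via \cite[Thm 26.3]{rockafellar} following Definition~\ref{defi gradient surjective}, Lemma~\ref{lemma gaussian curvature monge} for the Monge--Amp\`ere equation, and the identity~\eqref{eq inverse support function} identifying $\pi\circ G_\Sigma$ with $(Du_\Sigma)^{-1}$. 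Your identification of step (i) as the real content --- upgrading the formal Legendre dual to an honest $C^2$ spacelike graph via $D^2u>0$, injectivity and surjectivity of $Du$, and the inverse function theorem --- is exactly right; that is precisely what the two extra requirements in Definition~\ref{def entire solution} buy you.

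One small bookkeeping point, which does not affect correctness but should be untangled in a final write-up: in step (iii) you invoke Proposition~\ref{prop reg domain conv} (which presupposes $\mathcal D_\Sigma$ is a regular domain) in the same breath as you establish regularity. The dependence is not circular, but the order should be: the unnamed boundary lemma alone gives $u_{\mathcal D_\Sigma}|_{\partial\D}=u_\Sigma|_{\partial\D}=u|_{\partial\D}=\ph$; finiteness of $\ph$ at at least three points then forces $\mathcal D_\Sigma$ to have at least three non-parallel null support planes (and, by Corollary~\ref{cor domains dependence} applied to the future-convex $\Sigma$, to be future-complete), hence a regular domain; only then does Proposition~\ref{prop reg domain conv} give $\ph_{\mathcal D_\Sigma}=\ph$ and Proposition~\ref{prop:bijection reg dom} give $\mathcal D_\Sigma=\mathcal D_\ph$. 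Likewise the parenthetical ``(positive, smooth)'' in the converse is an assumption you are adding rather than reading off the hypothesis, but it is the implicit convention the paper adopts in its formulation of the Minkowski problem, and as you note in the forward direction the positivity of $\psi$ is in fact forced by $D^2u\geq 0$ and finiteness of $\psi$, so this is harmless.
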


\subsection{Gaussian curvature and examples}

Let us now give two first basic explicit examples:

\begin{example} \label{ex support function hyperboloid}
%Todo: Maybe we should relabel it with a superscript
The hyperboloid $\mathrm{Hyp}$ (see Example \ref{example hyperboloid}), rescaled by a factor $1/\sqrt K$, is an entire strictly future-convex surface (which we denote $\mathrm{Hyp}^K$) of constant Gaussian curvature $K$. In fact, it can be checked directly that (if $\pmb n$ is the future unit normal field) its shape operator is $B=D\pmb n=\sqrt K\mathbbm 1$, where $\mathbbm 1$ is the identity operator. Such surface is invariant by the group of linear isometries $\SO_0(2,1)$. Its support function is:
$$u_{\mathrm{Hyp}^K}(\mathsf x)=-\frac{1}{\sqrt K}\sqrt{1-|\mathsf x|^2}~,$$
which is a solution of Equation \eqref{eq monge constant curvature}.
Observe that $u_{\mathrm{Hyp}^K}$ is finite on the whole disk and $u_{\mathrm{Hyp}^K}=0$ on $\partial\D$.
\end{example}

\begin{example} 
We have introduced in Example \ref{ex support function flat} the trough:
$$T:=\{\pmb x\in\R^{2,1}\,:\,x_2^2-x_3^2=-1,\,x_3>0\}~,$$
Its support function, at any point $\mathsf x=(x,y)$, is:
$$u_{T}(x,y)=\begin{cases} 
-\sqrt{1-y^2} & \text{if }x=0\text{ and }y\in[-1,1] \\
+\infty & \text{otherwise}
 \end{cases}~.$$
 We remark that the trough is convex but not strictly convex and has Gaussian curvature 0. The essential support of $u_T$ is a segment.
\end{example}

\section{Tools from Monge-Amp\`ere equations} \label{sec monge ampere}

In order to prove the existence and uniqueness of entire surfaces of prescribed curvature, we will construct solutions of Equation \eqref{eq monge ampere curvature}. For this purpose, we will need several tools from the classical theory of Monge-Amp\`ere equations --- in particular, the notion of generalized solution, the maximum principle, and some results of existence and regularity. The purpose of this section is to collect the necessary tools and prove a generalized maximum principle for Monge-Amp\`ere equations.

\subsection{Generalized solutions}

Given a convex function $u:\Omega\rar\R$ for $\Omega$ a convex domain in $\R^2$, we define the subdifferential of $u$ as the set-valued function $\partial_u$ whose value at a point $\mathsf x\in\Omega$ is:
$$\partial_u(\mathsf x)=\left\{Dv\,|\,v\text{ affine; }graph(v)\text{ is a support plane for }graph(u);\,(\mathsf x,u(\mathsf x))\in graph(v)\right\}\,.$$

%The Gauss map $G:S=graph(u)\rar\Hyp^2$ (which is also set-valued in general) is easily recovered from the normal mapping, by (setting $w=(\bar w,u(\bar w))$):
%$$z\in N_u(\bar w)\Leftrightarrow \frac{(z,1)}{\sqrt{1-|z|^2}}\in G(w).$$
In general $\partial_u(\mathsf x)$ is a convex set. If $u$ is differentiable at $\mathsf x$, then $\partial_u(\mathsf x)=\left\{Du(\mathsf x)\right\}$. We define the Monge-Amp\`ere measure on the collection of Borel subsets $\omega$ of $\R^2$:
$$M\!A_u(\omega)=\mathcal{L}(\partial_u(\omega))$$
where $\mathcal{L}$ denotes the Lebesgue measure on $\R^2$.

\begin{lemma}[{\cite[Lemma 2.3]{trudwang}}]
If $u$ is a $C^2$ convex function, then $$M\!A_u(\omega)=\mathcal{L}(Du(\omega))=\int_\omega (\det D^2 u)d\mathcal{L}\,.$$
%In general,  the regular part of the Lebesgue decomposition of $M\!A_u(\omega)$ is $\int_\omega (\det D^2 u) d\mathcal{L}$, where $\det D^2 u$ is defined almost everywhere.
\end{lemma}

\begin{defi}
Given a nonnegative measure $\nu$ on $\Omega$, we say a convex function $u:\Omega\rar\R$ is a generalized solution to the Monge-Amp\`ere equation
\begin{equation} \label{general monge ampere}
\det D^2u=\nu
\end{equation}
if $M\!A_u(\omega)=\nu(\omega)$ for all Borel subsets $\omega$. In particular, given an integrable function $f:\Omega\rar\R$, $u$ is a generalized solution to the equation $\det D^2u=f$ if and only if, for all $\omega$, $$M\!A_u(\omega)=\int_\omega fd\mathcal{L}\,.$$
\end{defi}

We collect here, without proofs, some facts which will be used in the following. Unless explicitly stated, the results hold in $\R^n$, although we are only interested in $n=2$. 

\subsection{Stability and comparison principle}

%Recall that, by Aleksandrov Theorem, a convex function $u$ on $\Omega$ is twice-differentiable almost everywhere.
Let us start by the following important lemma, which concerns the continuity of the Monge-Ampère measure.

\begin{lemma}[{\cite[Lemma 2.2]{trudwang}}] \label{convergence of solutions}
Let  $u_n$ be a sequence of convex functions on a convex domain $\Omega$. If $u_n$ converges uniformly on compact sets to $u_\infty$, then the Monge-Amp\`ere measures $M\!A_{u_n}$ converge weakly to $M\!A_{u_\infty}$. 
%In particular, if $u_n$ are solutions to a Monge-Amp\`ere equation (\ref{general monge ampere}), then the limit $u$ is also solution to the same equation.
\end{lemma}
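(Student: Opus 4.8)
The plan is the standard one for stability of Monge--Amp\`ere measures: first show that the sequence $(M\!A_{u_n})$ is locally uniformly bounded, hence weak-$*$ precompact, and then identify every weak limit with $M\!A_{u_\infty}$ by verifying the two one-sided inequalities of the Portmanteau criterion. The local mass bound comes from the equi-Lipschitz property of convex functions: if $K\Subset K'\Subset\Omega$, then $u_n\to u_\infty$ uniformly on $K'$ makes the $u_n$ uniformly bounded there, hence uniformly Lipschitz on $K$, so $\partial_{u_n}(K)$ lies in a fixed ball $\overline B(0,L)\subset\R^2$ and $M\!A_{u_n}(K)=\mathcal L(\partial_{u_n}(K))\le\mathcal L(\overline B(0,L))$ independently of $n$. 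By Banach--Alaoglu it then suffices to show that any weak-$*$ limit (against $C_c(\Omega)$) of a subsequence of $(M\!A_{u_n})$ equals $M\!A_{u_\infty}$.

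The easy half-inequality is $\limsup_n M\!A_{u_n}(K)\le M\!A_{u_\infty}(K)$ for compact $K\subset\Omega$, which follows from closedness of subdifferentials under local uniform limits: if $\mathsf p_n\in\partial_{u_n}(\mathsf x_n)$ with $\mathsf x_n\in K$, then along a subsequence $\mathsf x_n\to\mathsf x_0\in K$ and $\mathsf p_n\to\mathsf p$ (using the ball bound), and passing to the limit in $u_n(\mathsf z)\ge u_n(\mathsf x_n)+\mathsf p_n\cdot(\mathsf z-\mathsf x_n)$ gives $\mathsf p\in\partial_{u_\infty}(\mathsf x_0)$. Hence for every $\varepsilon>0$ one has $\partial_{u_n}(K)\subset(\partial_{u_\infty}(K))_\varepsilon$ for $n$ large (else extract a contradiction), and since $\partial_{u_\infty}(K)$ is closed, $\mathcal L((\partial_{u_\infty}(K))_\varepsilon)\downarrow\mathcal L(\partial_{u_\infty}(K))$, giving the claim.

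The reverse inequality $\liminf_n M\!A_{u_n}(U)\ge M\!A_{u_\infty}(U)$ for open $U\Subset\Omega$ is the step I expect to be the main obstacle, and I would argue it through a.e.\ uniqueness of contact points. For $\mathcal L$-a.e.\ $\mathsf p\in\partial_{u_\infty}(U)$ the supporting plane $\ell_{\mathsf p}$ touches the graph of $u_\infty$ at a single point $\mathsf x(\mathsf p)$, which must then lie in $U$ (equivalently, $u_\infty^*$ is differentiable $\mathcal L$-a.e.). For such $\mathsf p$, choose $V$ with $\mathsf x(\mathsf p)\in V\Subset U$; the convex function $u_\infty-\ell_{\mathsf p}$ has a \emph{strict} minimum at $\mathsf x(\mathsf p)$ over $\overline V$, so the minimizers $\mathsf x_n$ of $u_n-\ell_{\mathsf p}$ over $\overline V$ converge to $\mathsf x(\mathsf p)$ and in particular lie in the interior $V$ for $n$ large, forcing $\mathsf p\in\partial_{u_n}(\mathsf x_n)\subset\partial_{u_n}(U)$. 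Thus $\mathbbm 1_{\partial_{u_\infty}(U)}\le\liminf_n\mathbbm 1_{\partial_{u_n}(U)}$ $\mathcal L$-a.e., and Fatou's lemma gives $M\!A_{u_\infty}(U)\le\liminf_n M\!A_{u_n}(U)$.

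Combining the local mass bound with these two inequalities and applying the Portmanteau theorem for locally finite positive measures on $\Omega$ yields $M\!A_{u_n}\rightharpoonup M\!A_{u_\infty}$; concretely, testing on balls $B\Subset\Omega$ with $M\!A_{u_\infty}(\partial B)=0$, the two one-sided bounds force $M\!A_{u_n}(B)\to M\!A_{u_\infty}(B)$, which pins down the weak limit. The only genuinely delicate ingredient is the a.e.\ single-valuedness of the contact map exploited in the $\liminf$ step; the rest is soft compactness and convexity.
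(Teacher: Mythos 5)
The paper does not supply a proof of this lemma; it is stated, with others, under the heading ``We collect here, without proofs, some facts'' and attributed to Trudinger--Wang. Your argument is a correct and essentially complete rendering of the standard proof found in that reference and in, e.g., Guti\'errez's book: local mass bounds from equi-Lipschitzness, the $\limsup$ inequality from closedness of subdifferentials under local uniform convergence, the $\liminf$ inequality from a.e.\ single-valuedness of the ``inverse'' contact map plus Fatou, and Portmanteau to conclude. The only point worth tightening in a write-up is the justification of the a.e.\ single-valuedness step: the clean statement is that the set of slopes $\mathsf p$ belonging to $\partial_{u_\infty}(\mathsf x_1)\cap\partial_{u_\infty}(\mathsf x_2)$ for two distinct $\mathsf x_1,\mathsf x_2\in\Omega$ has Lebesgue measure zero (a standard lemma in the Monge--Amp\`ere literature); the ``$u_\infty^*$ differentiable a.e.'' phrasing is essentially equivalent but requires a remark that the relevant exceptional set --- the non-differentiability set in the interior of $\mathrm{dom}(u_\infty^*)$ together with $\partial\,\mathrm{dom}(u_\infty^*)$, the boundary of a convex set --- is indeed null.
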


Second, the following comparison principle is the key ingredient, for instance, for every result of uniqueness.

\begin{theorem}[Maximum principle, \cite{trudwang,gutierrez}] \label{comparison principle}
Given a bounded convex domain $\Omega$ and two convex functions $u_+,u_-\in C^0(\overline\Omega)$, if $M\!A_{u_+}(\omega)\leq M\!A_{u_-}(\omega)$ for every Borel subset $\omega$, then
$$\min_{\overline\Omega}(u_+-u_-)=\min_{\partial\Omega}(u_+-u_- )\,.$$  
\end{theorem}

The following is a direct consequence.

\begin{cor}[Comparison principle] \label{cor comparison principle}
Given a bounded convex domain $\Omega$ and two convex functions $u_+,u_-\in C^0(\overline\Omega)$, if $u_+\geq u_-$ on $\partial\Omega$ and $M\!A_{u_+}(\omega)\leq M\!A_{u_-}(\omega)$ for every Borel subset $\omega$, then
$u_+\geq u_-$ on $\Omega$.
\end{cor}

In particular, we have the following result of uniqueness.

\begin{cor} \label{cor comparison principle equality}
Given two generalized solutions $u_1,u_2\in C^{0}(\overline \Omega)$ to the Monge-Amp\`ere equation $\det D^2 u=\nu$ on a bounded convex domain $\Omega$, if $u_1\equiv u_2$ on $\partial\Omega$, then $u_1\equiv u_2$ on $\Omega$.
\end{cor}

\subsection{Existence and regularity}

The following is a classical result of existence for the Dirichlet problem for Monge-Amp\`ere equations.

\begin{theorem}[Dirichlet problem, {\cite[Theorem 1.6.2]{gutierrez}}] \label{thm existence classical monge ampere}
Let $\Omega$ be a bounded strictly convex domain. Given any continuous function $g:\partial\Omega\to\R$ and any Borel measure $\nu$ with $\nu(\Omega)<+\infty$, there exists a generalized solution $u\in C^0(\overline\Omega)$ of the problem
$$
\begin{cases}
\det D^2u=\nu & \text{in }\Omega \\
u|_{\partial\Omega}=g~.
\end{cases}
$$
\end{theorem}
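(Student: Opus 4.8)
The plan is to establish existence by Perron's method. Write $\mathcal S$ for the family of convex \emph{subsolutions}, that is, convex $v\in C^0(\overline\Omega)$ with $v\le g$ on $\partial\Omega$ and $M\!A_v\ge\nu$ in the sense of measures, and set $u:=\sup_{v\in\mathcal S}v$. The comparison principle (Corollary \ref{cor comparison principle}) forces $u$ to lie below any affine function dominating $g$ on $\partial\Omega$, so $u$ is finite; and a supremum of convex subsolutions is again a convex subsolution (the Monge-Amp\`ere measure of a maximum of convex functions dominates, on the region where a given branch is active, the Monge-Amp\`ere measure of that branch), so $u\in\mathcal S$. The content is then to prove $M\!A_u=\nu$ exactly and $u|_{\partial\Omega}=g$.

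I would handle $M\!A_u\le\nu$ together with the boundary condition most transparently by first treating the case $\nu=\sum_{i=1}^{N}a_i\delta_{p_i}$, and deducing the general case afterward by approximation. For atomic $\nu$ the family $\mathcal S$ is nonempty — take a maximum of convex cones over the points $p_i$, each with enough subdifferential mass, normalized to stay below $g$ on $\partial\Omega$ — and the envelope $u$ is polyhedral. If $M\!A_u$ charged a point outside $\{p_1,\dots,p_N\}$, or charged some $p_i$ with mass exceeding $a_i$, one could locally replace $u$ (over a small disk on which $\nu$ carries no extra mass) by the ruled convex function with the same boundary values, lying strictly above $u$ and still in $\mathcal S$, contradicting maximality; hence $M\!A_u=\nu$.

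For the boundary values one invokes the strict convexity of $\Omega$: at each $\xi_0\in\partial\Omega$ there is a supporting hyperplane $H$ with $\overline\Omega\cap H=\{\xi_0\}$, and, writing $h\le 0$ for an affine function cutting out $H$, the affine function $g(\xi_0)+\epsilon-Mh$ dominates $g$ on $\partial\Omega$ for $M$ large while equalling $g(\xi_0)+\epsilon$ at $\xi_0$; since its Monge-Amp\`ere measure is $0\le\nu$, the comparison principle gives $\limsup_{\mathsf x\to\xi_0}u\le g(\xi_0)+\epsilon$. A matching lower bound comes from exhibiting an element of $\mathcal S$ with value near $g(\xi_0)$ at $\xi_0$, again built from cones in the atomic case. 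Letting $\epsilon\to0$ gives $u|_{\partial\Omega}=g$. The general finite measure $\nu$ is then handled by choosing atomic $\nu_k$ with $\nu_k\rightharpoonup\nu$ and $\nu_k(\Omega)\le\nu(\Omega)$: the corresponding solutions $u_k$ are uniformly bounded (above, since convex functions attain their maximum on $\partial\Omega$; below, by Aleksandrov's maximum principle and the uniform mass bound), hence locally equi-Lipschitz, so along a subsequence $u_k\to u$ locally uniformly with $M\!A_u=\nu$ by the stability Lemma \ref{convergence of solutions}, while the same barriers, valid uniformly in $k$, force $u|_{\partial\Omega}=g$. Uniqueness, when needed, is Corollary \ref{cor comparison principle equality}.

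I expect the main obstacle to be the boundary analysis, and within it the construction of a \emph{lower} barrier: the flat upper barrier is immediate once one has a supporting hyperplane touching $\overline\Omega$ only at $\xi_0$, but producing subsolutions whose boundary trace at $\xi_0$ approaches $g(\xi_0)$ while keeping $M\!A\ge\nu$ is delicate for a general finite measure and a merely continuous datum $g$. This is precisely why I would route everything through the atomic case — where subsolutions are explicit polyhedral functions and both $M\!A_u=\nu$ and the barrier construction are elementary — and recover the general statement only at the end, through the a priori estimates and the stability lemma.
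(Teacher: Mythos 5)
The paper offers no proof of this statement: it is quoted directly from the cited reference \cite[Theorem 1.6.2]{gutierrez} and used as a black box, so there is no internal argument to compare against. Your outline is essentially the proof given in that reference: Perron's method over the family $\mathcal S$ of convex subsolutions, exact solvability for atomic measures first, boundary barriers built from the strict convexity of $\Omega$, and then weak approximation by atomic measures combined with Aleksandrov-type a priori bounds and the stability statement (Lemma \ref{convergence of solutions}) to pass to a general finite Borel measure. So the route is the right one and matches the source the paper relies on.

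One step would fail as written, namely your argument that $M\!A_u(\{p_i\})\le a_i$ in the atomic case. If you replace $u$ on a small disk $B$ containing $p_i$ by the ``ruled'' function (the convex envelope of $u|_{\partial B}$, which has vanishing Monge-Amp\`ere measure inside $B$), the lifted function does dominate $u$ and exceeds it at $p_i$, but precisely because it exceeds $u$ at $p_i$ the supporting planes of $u$ at $p_i$ are no longer supporting for the lift; the subdifferential at $p_i$ is not preserved, the Dirac mass $a_i\delta_{p_i}$ is lost, and the lift need not lie in $\mathcal S$, so no contradiction with maximality is obtained. (The same lifting is fine for excluding mass at a point that is not an atom, provided $B$ avoids all atoms: then the lift agrees with $u$ at each $p_j$, every supporting plane there survives, and membership in $\mathcal S$ is preserved.) The standard repair, and what the cited proof does, is to lift instead by the solution in $B$ of the one-atom problem $\det D^2 w=a_i\delta_{p_i}$ with boundary datum $u|_{\partial B}$, constructed explicitly from the envelope corrected by a cone with vertex over $p_i$, and to check that this lift dominates $u$, is strictly larger at $p_i$ when $M\!A_u(\{p_i\})>a_i$, and remains a subsolution. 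Apart from this, and the routine verifications that the glued lift is convex and that the lower boundary barriers for the atomic approximations are uniform in $k$ (they depend only on the total mass and the modulus of continuity of $g$), your plan is sound.
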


We remark here that Theorem \ref{thm existence classical monge ampere} does not apply directly to Equation \eqref{eq monge ampere curvature}, since in that case the hypothesis of finite total measure is not satisfied. Moreover, the boundary value will not be continuous in the general problem we consider. We also have the following important regularity property:

\begin{theorem}[{\cite[Theorem 3.1]{trudwang}}] \label{solution smooth}
Let $u$ be a strictly convex generalized solution to $\det D^2 u=f$ on a bounded convex domain $\Omega$ with smooth boundary. If $f>0$ and $f$ is smooth, then $u$ is smooth.
\end{theorem}

The following property will be used repeatedly in the paper, and is a peculiar property of dimension $n=2$.

\begin{theorem}[Aleksandrov-Heinz, {\cite[Remark 3.2]{trudwang}}] \label{solution strictly convex dimension 2}
Let $f$ be a positive function and let $u$ be a generalized solution of the Monge-Amp\`ere equation $\det D^2 u =f$ on a domain $\Omega\subset\R^2$. Then $u$ is strictly convex.
\end{theorem}

% Finally, we conclude by a result which concerns, in some sense, the opposite case with respect to the previous theorem. Namely, we consider the problem $\det D^2 u =0$.

% \begin{lemma} \label{lemma convex envelope det=0}
% Let $\Omega$ be a bounded strictly convex domain. Then for any lower semicontinuous $\ph:\partial\Omega\to\R$, the convex envelope (Definition \ref{defi convex envelope}) $\mathrm{conv}(\ph)$ satisfies 
% $$\det D^2 \mathrm{conv}(\ph)=0$$
% in the generalized sense.
% \end{lemma}
% The proof is a consequence of \cite[Theorem 1.5.2]{gutierrez} and the fact that $\mathrm{conv}(\ph)$ is convex in $\Omega$, and thus continuous.

\subsection{A generalized comparison principle}

In this section we will prove a version of the maximum principle (Theorem \ref{comparison principle}) which we can apply to functions valued in $\R \cup \{+\infty\}$ which satisfy a Monge-Amp\`ere equation on their domain of support. The following definition generalizes Definition \ref{defi gradient surjective}.

\begin{defi} A closed convex function $u$ on $\R^2$ taking values in $\R \cup \{+\infty\}$ is \emph{gradient-surjective} if the sub-differential gives a surjective set-valued map from the interior of its essential domain to $\R^2$.
\end{defi}

\begin{prop}[Generalized comparison principle]\label{prop:genmax}
Suppose $\Omega$ is a convex bounded domain, $u_+: \overline{\Omega} \to \R \cup \{+\infty\}$ is a closed convex function, and $u_- \in C^0(\overline{\Omega})$ is convex.
 If $u_+$ is gradient-surjective and $M\!A_{u_+}(\omega) \leq M\!A_{u_-}(\omega)$ for every Borel subset $\omega \subset \Omega_{u_+}$, then
\[
\min_{\overline{\Omega}}(u_+ - u_-) = \min_{\partial \Omega}(u_+ - u_-)~.
\]
\end{prop}

\begin{proof}
Under the assumptions, the function $u_+ - u_-$ is lower-semicontinuous on $\overline{\Omega}$, so it attains its minimum value at some point $\mathsf x_0 \in \overline{\Omega}$.
We first show that $\mathsf x_0 \notin \partial \Omega_{u_+} \setminus \partial \Omega$.

Indeed, suppose otherwise. Let $p \in \partial_{u_-}(\mathsf x_0)$, and let $l(\mathsf x) = u_-(\mathsf x_0) + p\cdot(\mathsf x - \mathsf x_0)$ be the corresponding affine support. Since $u_+$ is convex, the set $\Omega_{u_+}$ is convex, so it has a supporting hyperplane at $\mathsf x_0$. Let $q$ be the outward normal vector to such a hyperplane, so that the linear function $m(\mathsf x) = q \cdot (\mathsf x-\mathsf x_0)$ is negative on $\Omega_{u_+}$. Let $\tilde{p} = p + q$. Since $l$ is a support for $u_-$, for any $\mathsf x \in \Omega_{u_+}$, we have $l(x) < u_-(x)$, in other words $$\tilde{p}\cdot (\mathsf x - \mathsf x_0) < u_-(\mathsf x) - u_-(\mathsf x_0).$$

Now we use the property that $u_+$ is gradient-surjective to find a point $\mathsf x_1 \in \Omega_{u_+}$ for which $\tilde{p} \in \partial_{u_+}(\mathsf x_1)$. Let $\tilde{l}(\mathsf x)$ be the corresponding affine support for $u_+$ at $\mathsf x_1$, that is $$\tilde{l}(\mathsf x) = u_+(\mathsf x_1) + \tilde{p} \cdot (\mathsf x - \mathsf x_1)~.$$ Using $\tilde{l}(\mathsf x_0) \leq u_+(\mathsf x_0)$, we have $$u_+(\mathsf x_1) - u_+(\mathsf x_0) \leq \tilde{p} \cdot (\mathsf x_1 - \mathsf x_0)~.$$
Since $u_+ - u_-$ is minimized at $\mathsf x_0$, we have $$u_+(\mathsf x_0) - u_-(\mathsf x_0) \leq u_+(\mathsf x_1) - u_-(\mathsf x_1)~.$$ Putting these inequalities together gives
\[
\tilde{p}\cdot (\mathsf x_1 - \mathsf x_0) < u_-(\mathsf x_1) - u_-(\mathsf x_0) \leq u_+(\mathsf x_1) - u_+(\mathsf x_0) \leq \tilde{p} \cdot (\mathsf x_1 - \mathsf x_0)
\]
which is a contradiction. We conclude that $\mathsf x_0 \notin \partial \Omega_{u_+} \setminus \partial \Omega$. 

The rest of the argument is essentially the proof of the standard comparison principle (following \cite[Theorem 1.4.6]{gutierrez}). Suppose that $\mathsf x_0 \in \Omega_{u_+}$ and also for the sake of contradiction that $$u_+(\mathsf x_0) - u_-(\mathsf x_0) < \min_{\partial \Omega}(u_+ - u_-)~.$$ Then it follows also that $$u_+(x_0) - u_-(x_0) < \min_{\partial \Omega_{u_+}}(u_+ - u_-)~,$$ since otherwise the minimum would be attained on $\partial \Omega_{u_+} \setminus \partial \Omega$. By adding a suitable constant to $u_-$, we may arrange that $$u_+(\mathsf x_0) - u_-(\mathsf x_0) < 0 < \min_{\partial \Omega_{u_+}}(u_+ - u_-)~.$$ By replacing $u_-$ with $u_- + \delta |\mathsf x - \mathsf x_0|^2$ for small enough $\delta$, we can preserve these inequalities and also arrange that $M\!A_{u_+}(\omega) < M\!A_{u_-}(\omega)$ with strict inequality.

Let $U = \{\mathsf x \,|\, u_+(\mathsf x) - u_-(\mathsf x) < 0 \}$. A priori, since $u_+$ is only semicontinuous, $U$ need not be open; however, by arrangement $U \subset \Omega_{u_+}$, and $u_+$ is continuous on $\Omega_{u_+}$, so indeed $U$ is open. In fact, the set $\{\mathsf x \,|\, u_+(\mathsf x) - u_-(\mathsf x) \leq 0 \}$ is closed and contained in $\Omega_{u_+}$, so $U$ is compactly contained in $\Omega_{u_+}$, and $u_+$ is continuous on $\overline{U}$. Hence, $u_+ = u_-$ on the boundary of $U$, with $u_+ < u_-$ on the interior. It follows that $\partial_{u_-} (U) \subset \partial_{u_+}(U)$, which contradicts the strict inequality $M\!A_{u_+}(U) < M\!A_{u_-}(U)$.
\end{proof}

The following is a straightforward consequence of the generalized comparison principle.

\begin{prop} \label{prop conv comparison}
Let $\Omega$ be a convex domain. Suppose that $u_+, v: \overline{\Omega} \to \R \cup \{+\infty\}$ are closed convex functions with $u_+$ gradient-surjective, $v \in C^0(\overline{\Omega})$, and $M\!A_{u_+}(\omega) \leq M\!A_v(\omega)$ for every Borel subset $\omega \subset \Omega_{u_+}$. Suppose furthermore that $v(\xi) \leq 0$ at every point $\xi \in \partial \Omega$ for which $u_+(\xi) < +\infty$. Then $$u_+ \geq \mathrm{conv}(u_+|_{\partial \Omega}) + v~.$$
\end{prop}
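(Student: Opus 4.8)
The plan is to deduce Proposition \ref{prop conv comparison} from the generalized comparison principle (Proposition \ref{prop:genmax}) by comparing $u_+$ against the function $w := \mathrm{conv}(u_+|_{\partial\Omega}) + v$ on a suitable exhausting family of subdomains. First I would observe that $\mathrm{conv}(u_+|_{\partial\Omega})$ is a closed convex function on $\overline{\Omega}$, being the convex envelope of the boundary restriction; on $\Omega$ it is finite (indeed it is the support-type hull of finitely-much boundary data, as in Proposition \ref{prop reg domain conv}), so $w$ is continuous on $\Omega$ and lower semicontinuous on $\overline\Omega$. The key analytic point is that adding an \emph{affine} function does not change the Monge-Ampère measure, and the convex envelope $\mathrm{conv}(u_+|_{\partial\Omega})$ is, on the interior, locally a supremum of affine functions, hence its Monge-Ampère measure vanishes on any Borel subset of $\Omega$ that avoids the (measure-zero, or at least $M\!A$-null) ``creases''. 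Slightly more carefully: $\mathrm{conv}(u_+|_{\partial\Omega})$ has $M\!A$-measure zero on all of $\Omega$ because its gradient image is contained in the gradient image of the affine pieces, which is countable-dimensional — so $M\!A_w = M\!A_v$ on Borel subsets of $\Omega_{u_+}$ (the subdifferential of a sum of a convex function and a function with vanishing $M\!A$-measure has the same Lebesgue measure). Combined with the hypothesis $M\!A_{u_+}(\omega)\le M\!A_v(\omega)$, this gives $M\!A_{u_+}(\omega)\le M\!A_w(\omega)$ for every Borel $\omega\subset\Omega_{u_+}$.

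Next I would check the boundary inequality $u_+\ge w$ on $\partial\Omega$. For $\xi\in\partial\Omega$ with $u_+(\xi)=+\infty$ this is vacuous. For $\xi\in\partial\Omega$ with $u_+(\xi)<+\infty$, we have $\mathrm{conv}(u_+|_{\partial\Omega})(\xi)\le u_+|_{\partial\Omega}(\xi)=u_+(\xi)$ by definition of the convex envelope, and $v(\xi)\le 0$ by hypothesis, so $w(\xi)=\mathrm{conv}(u_+|_{\partial\Omega})(\xi)+v(\xi)\le u_+(\xi)$, as desired. Now apply Proposition \ref{prop:genmax} with $u_+$ as given (it is gradient-surjective and closed convex by hypothesis) and $u_- := w$: this yields
\[
\min_{\overline{\Omega}}(u_+ - w) = \min_{\partial\Omega}(u_+ - w) \ge 0~,
\]
which is exactly $u_+\ge \mathrm{conv}(u_+|_{\partial\Omega}) + v$ on $\overline\Omega$.

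The main obstacle I anticipate is the claim that $M\!A$ of the convex envelope $\mathrm{conv}(u_+|_{\partial\Omega})$ vanishes (or at least is dominated appropriately) on Borel subsets of the interior, and more precisely the claim that $M\!A_{v + \mathrm{conv}(u_+|_{\partial\Omega})} = M\!A_v$ there; naively the subdifferential of a sum is not the sum of subdifferentials, and one must argue that the extra ``folding'' contributed by the piecewise-affine envelope adds only a Lebesgue-null set to the gradient image when intersected against the single-valued part coming from $v$ (which is $C^2$ where needed, or at least has an absolutely continuous $M\!A$-measure). One clean way around this is to note that $\mathrm{conv}(u_+|_{\partial\Omega})$ is, on each open region where it is affine, genuinely affine, and $\Omega$ is covered up to a Lebesgue-null set by such regions together with lower-dimensional strata; on each affine region $M\!A_{v + \text{affine}} = M\!A_v$ exactly, and the exceptional strata carry no $M\!A_{u_+}$-mass because $u_+$ is strictly convex there (Theorem \ref{solution strictly convex dimension 2}, since $u_+$ solves a Monge-Ampère equation with positive right-hand side on $\Omega_{u_+}$), so the inequality $M\!A_{u_+}\le M\!A_w$ survives on all Borel subsets. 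A secondary, minor technical point is ensuring $w\in C^0(\overline\Omega)$ as required by the hypotheses of Proposition \ref{prop:genmax} in the role of $u_-$; if $\mathrm{conv}(u_+|_{\partial\Omega})$ fails to be continuous up to the boundary one should instead run the argument on a compact exhaustion $\Omega_n\nearrow\Omega$ and pass to the limit, using lower semicontinuity of $u_+-w$ to locate the minimum.
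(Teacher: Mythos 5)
There is a genuine gap at the heart of your argument: the claim that $M\!A_{w} = M\!A_v$, where $w = \mathrm{conv}(u_+|_{\partial\Omega}) + v$, is false in general. The Monge--Amp\`ere measure of a sum of convex functions is not the sum of the measures, nor does adding a function $g$ with $M\!A_g \equiv 0$ leave the measure unchanged. For a concrete counterexample in $\R^2$, take $u(x,y) = (x^2+y^2)/2$ (so $M\!A_u = \mathcal L$) and $g(x,y) = |x|$ (so $M\!A_g \equiv 0$, since the gradient image is contained in the line $\{(\pm 1, t)\}$). Then $\partial_{u+g}$ of the segment $\{x = 0,\ |y|<1\}$ is the rectangle $[-1,1]\times(-1,1)$, which has positive Lebesgue measure; so $M\!A_{u+g}$ charges a Lebesgue-null set and cannot equal $M\!A_u$. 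What you actually need is the one-sided inequality $M\!A_{v+g} \geq M\!A_v$, which does hold (it follows from superadditivity of $\det^{1/n}$ on positive semidefinite matrices), but this is a nontrivial monotonicity result for generalized Monge--Amp\`ere measures that you neither state nor prove. Your parenthetical justification (``countable-dimensional gradient image'') does not make sense as written and does not establish either the false equality or the needed inequality.

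The paper's proof is both shorter and sidesteps all of this. It uses the characterization of $\mathrm{conv}(u_+|_{\partial\Omega})$ as a supremum of affine functions $l$ with $l|_{\partial\Omega}\le u_+|_{\partial\Omega}$: for each such $l$, one has $M\!A_{l+v} = M\!A_v$ \emph{exactly}, because an affine shift does not change the subdifferential. So one applies the generalized comparison principle (Proposition \ref{prop:genmax}) once for each affine $l$ --- with $u_- = l + v$, which is automatically in $C^0(\overline\Omega)$ and whose boundary values lie below $u_+|_{\partial\Omega}$ --- to get $u_+ \geq l + v$, and then takes the supremum over $l$ at the end. This avoids both the Monge--Amp\`ere-of-a-sum issue and the boundary-regularity issue you flagged (continuity of $\mathrm{conv}(u_+|_{\partial\Omega})$ on $\overline\Omega$), which is genuinely a problem when $u_+|_{\partial\Omega}$ is merely lower semicontinuous. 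Your exhaustion workaround for the latter could be made to work, but it is simpler to never form $w$ at all.
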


\begin{proof}
Set $\ph = u_+|_{\partial \Omega}$. By the remark following Definition \ref{defi convex envelope}, it is enough to show that $u_+ \geq l + v$ for every affine function $l$ on $\overline{\Omega}$ with $l|_{\partial \Omega} \leq \ph$. By the assumption on $v$, the restriction of $l + v$ to $\partial \Omega$ is less than or equal to $\ph$, and its Monge-Amp\`{e}re measure coincides with that of $v$. Hence we may apply the generalized comparison principle to conclude $u_+ \geq l + v$.
\end{proof}

%\part{Gaussian curvature}\label{part:2}

\section{Gauss map and minimal Lagrangian maps} \label{sec image of gauss map and support function on boundary}

In this section, we will study some properties of the Gauss map and the support function of future-convex entire surfaces with Gaussian curvature bounded from above and below by positive constants. We thus prove Theorem \ref{prop constant curvature convex hull1}, which is a refined version of Theorem \ref{thm gauss image}. We will then study the relation with minimal Lagrangian maps with values in the hyperbolic plane, and derive Corollary \ref{cor image min lag map}  as a consequence. 

%\begin{defi}
%Let $\Sigma$ be a smooth, future-strictly convex embedded surface in $\R^{2,1}$. We say that $\Sigma$ has \emph{bounded Gaussian curvature} if there exist $a,b>0$ such that $a<\kappa(p)<b$ for every $p\in\Sigma$.
%\end{defi}

%The purpose of this section is to prove the following property of spacelike convex surfaces of bounded curvature, which will impose restrictions on the solutions of Equation \eqref{eq monge ampere curvature} we will need to construct.

\subsection{Classical barriers} \label{ex support function revolution} 
We give here the construction of some explicit surfaces of constant Gaussian curvature. Besides being examples of the theory previously explained, Example \ref{ex CGC in half-disk} will serve as a \emph{barrier} in the proof of Theorem \ref{prop constant curvature convex hull1} below. 

These surfaces are obtained as \emph{surfaces of revolution}, that is, they are invariant under a 1-parameter group of hyperbolic isometries in $\SO(2,1)<\isom(\R^{2,1})$. Surfaces of this form were studied in \cite{hanonomizu}, where the first examples of non-standard isometric embeddings of $\Hyp^2$ in $\R^{2,1}$ were provided.
Up to conjugation, we can assume the 1-parameter group has the form
\begin{equation} \label{eq 1-parameter hyperbolic}
\left\{\begin{pmatrix} 1 & 0 & 0 \\ 0 & \cosh(s) & \sinh(s) \\ 0 &\sinh(s) & \cosh(s) \end{pmatrix}\,|\,s\in\R\right\}~.
\end{equation}
Hence we consider surfaces $\Sigma$ parameterized by 
\begin{equation}\label{eq 1-parameter parametrization}
(t,s)\mapsto\begin{cases}
x_1(t,s)=g(t) \\ x_2(t,s)=\sinh(s)r(t) \\ x_3(t,s)=\cosh(s)r(t)
\end{cases}~.
\end{equation}
That is, we apply the 1-parameter hyperbolic group to the planar curve $(g(t),0,r(t))$. Following \cite{hanonomizu}, one can assume that
\begin{equation}\label{eq arclength}
g'(t)^2-r'(t)^2=1,
\end{equation}
which means that, for $s=s_0$ fixed, the planar curve $\Sigma\cap\{x_2\cosh(s_0)=x_3\sinh(s_0)\}$ is parameterized by arclength. 

\begin{remark} \label{rmk invariant support function}
Viewing the space $\overline{\D} \times \R$ as the space of achronal planes in $\R^{2,1}$, it is straightforward to write down the action of the 1-parameter group of Equation \eqref{eq 1-parameter hyperbolic} on this space. Using the fact that if $\Sigma$ is invariant under this group then so must be the graph of its support function $u_\Sigma$ in $\overline{\D} \times \R$, it can be shown that $u_\Sigma$ satisfies the following invariance (compare Equation \eqref{eq action on support functions}):
\begin{equation} \label{eq invariant support function}
u_\Sigma(x,y)=\sqrt{1-y^2} \cdot u_\Sigma\left(\frac{x}{\sqrt{1-y^2}},0\right)~.
\end{equation}
As a consequence, if $\xi=(x,y)\in\partial\D$ so that $x^2+y^2=1$, then
$$u_\Sigma(\xi)=|x|\cdot u_\Sigma\left(\frac{x}{|x|},0\right)=\begin{cases} x\cdot u_\Sigma(1,0) & \text{if }x\geq 0 \\ -x\cdot u_\Sigma(-1,0) & \text{if }x<0 \end{cases}~.$$
This function is affine on both half-planes $x \geq 0$ and $x \leq 0$. If $u_\Sigma(1,0) + u_\Sigma(-1,0) = 0$, then the two affine functions agree, and $u_\Sigma|_{\partial \D}$ coincides with support function of the future of a point. If $u_\Sigma(1,0) + u_\Sigma(-1,0) > 0$ then the two affine functions meet at a convex angle, and $u_\Sigma|_{\partial \D}$ coincides with the support function of the future of the segment with end points $(u_\Sigma(-1,0),0,0)$ and $(u_\Sigma(1,0),0,0)$. If $u_\Sigma(1,0) + u_\Sigma(-1,0) < 0$ then $u_\Sigma|_{\partial \D}$ coincides with the support function of the future of a hyperbola given as the intersection of the null cones of two points.

\end{remark}

\begin{example}[Entire CGC surfaces with surjective Gauss map]
The Gaussian curvature of the surface parametrized by \eqref{eq 1-parameter parametrization} assuming \eqref{eq arclength} is given by the simple formula $K(s,t) = r''(t)/r(t)$ \cite[Equation 5]{hanonomizu}. For any $a > 0$, we consider first the solution given by
$$r(t)=a \cosh(t)~,$$
which therefore has $g'(t)=\sqrt{1+a^2\sinh^2(t)}$. By choosing 
$$g(t)=\int_0^t \sqrt{1+a^2\sinh^2(x)}dx~,$$
the corresponding surface (say, $\Sigma_a$) is invariant by the reflection $(x_1,x_2,x_3)\mapsto(-x_1,x_2,x_3)$. When written as a graph $\Sigma_a=graph(f_a)$, $f_a$ has therefore a minimum point at the origin. We remark that, when $a=1$, $\Sigma_1$ is the hyperboloid $\mathrm{Hyp}$.

By multiplying $\Sigma_a$ by the factor $1/\sqrt{K}$, one obtains analogously surfaces $\Sigma_a^K=(1/\sqrt K)\Sigma_a$ of constant Gaussian curvature $K$. To compute the support function $u_{\Sigma_a^K}$ of $\Sigma_a^K = \mathrm{graph}(f_a^K)$, we remark that $u_{\Sigma_a^K}(1,0)$ can be expressed as \cite[Section 2.3]{Bonsante:2015vi}:
$$u_{\Sigma_a^K}(1,0)=\lim_{x_1\to+\infty}(x_1-f^K_a(x_1,0,0))=\lim_{t\to+\infty}(g(t)-r(t))~.$$
It can be thus shown that $F(a):=u_{\Sigma_a^K}(1,0)$ is finite for every $a$, is a decreasing function of $a$, and 
\begin{equation} \label{eq lim F is infinity}
\lim_{a\to 0^+} F(a)=+\infty~.
\end{equation}

%Make sure g and r make sense

Using Remark \ref{rmk invariant support function}, we therefore have, for $\xi=(x,y)\in\partial\D$:
$$u_{\Sigma_a^K}(\xi)=\frac{F(a)}{\sqrt K}|x|~.$$
So, when $a\in(0,1)$, the domain of dependence of $\Sigma_a^K$ is the \emph{future of a segment}. That is, 
$$\mathcal D_{\Sigma^K_a}=\bigcup_{x\in\left[-\frac{F(a)}{\sqrt K},\frac{F(a)}{\sqrt K}\right]} I^+(x,0,0)~.$$
See Figure \ref{fig:finitetrough}.
From the expression \eqref{eq invariant support function} of Remark \ref{rmk invariant support function}, we also see that $u_{\Sigma^K_a}$ is finite on $\overline \D$ and $u_{\Sigma^K_a}\in C^0(\overline\D)$. Moreover, again from \eqref{eq invariant support function} we get:
\begin{equation} \label{eq support function of invariant solutions on vertical line}
u_{\Sigma^K_a}(0,y)=\sqrt{1-y^2}u_{\Sigma^K_a}(0,0)=-\frac{a}{\sqrt K}\sqrt{1-y^2}~,
\end{equation}
which corresponds to the fact that $\Sigma_a^K\cap \{x_1=0\}$ is a hyperbola through the point $(0,0,a/\sqrt{K})$.

\begin{figure}[htb]
\centering
\includegraphics[height=3.9cm]{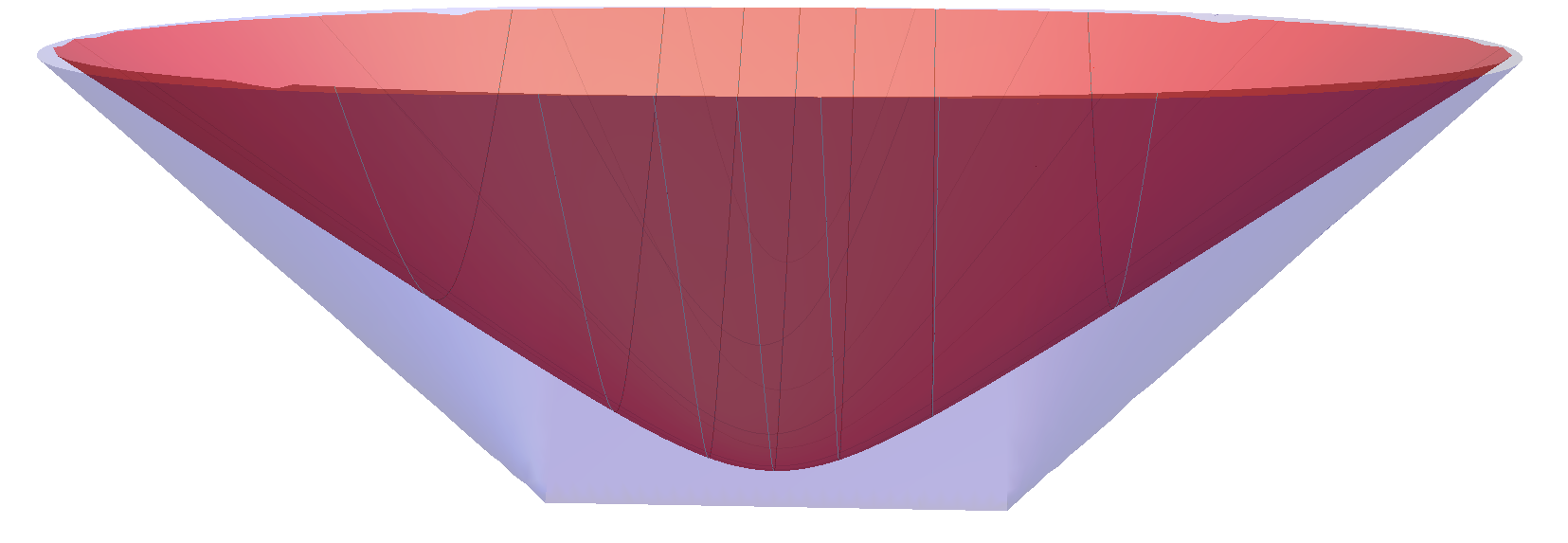}
\caption{The surface of revolution $\Sigma_a^K$, corresponding to the choice $r(t)=a\cosh(t)$. (Here $K=1$ and $a=1/2$.) The domain of dependence is the future of a spacelike segment.  \label{fig:finitetrough}}
\end{figure}

\end{example}

\begin{example}[Entire CGC surfaces with Gauss map to a half-plane] \label{ex CGC in half-disk}
Another useful family of surfaces, still studied in \cite{hanonomizu}, is 
obtained by the choice $r(t)=e^t$. By writing the explicit expression of
$$g(t)=\int_0^t \sqrt{1+r'(x)^2}dx~,$$
this gives:
$$(t,s)\mapsto\begin{cases}
x_1(t,s)=\frac{1}{\sqrt K}\left(\sqrt{1+e^{2t}}-\frac{1}{2}\log\left(\frac{\sqrt{1+e^{2t}}+1}{\sqrt{1+e^{2t}}-1}\right)\right) \\ x_2(t,s)=\frac{1}{\sqrt K}\sinh(s)e^t \\ x_3(t,s)=\frac{1}{\sqrt K}\cosh(s)e^t
\end{cases}$$
Let us call $\Sigma_0^K$ such surface. See also Figure \ref{fig:revolution}. A direct computation, using Remark \ref{rmk invariant support function} shows that the corresponding support function is
\begin{equation}\label{eq chord barrier}
u_{\Sigma_0^K}(x,y)=\begin{cases} -\frac{1}{2\sqrt K}x\log\left(\frac{1+\sqrt{1-\frac{x^2}{1-y^2}}}{1-\sqrt{1-\frac{x^2}{1-y^2}}}\right) & x\geq 0 \\ +\infty & x<0\end{cases}~.
\end{equation}
This is another solution of Equation \eqref{eq monge constant curvature}, which by direct inspection can be shown to be continuous on the closed half-space $\overline{\D_+}$, where $\D_+=\D\cap\{x>0\}$, and $u_{\Sigma_0^K}=0$ on $\partial\D_+$.

\begin{figure}[htb]
\centering
\includegraphics[height=4.3cm]{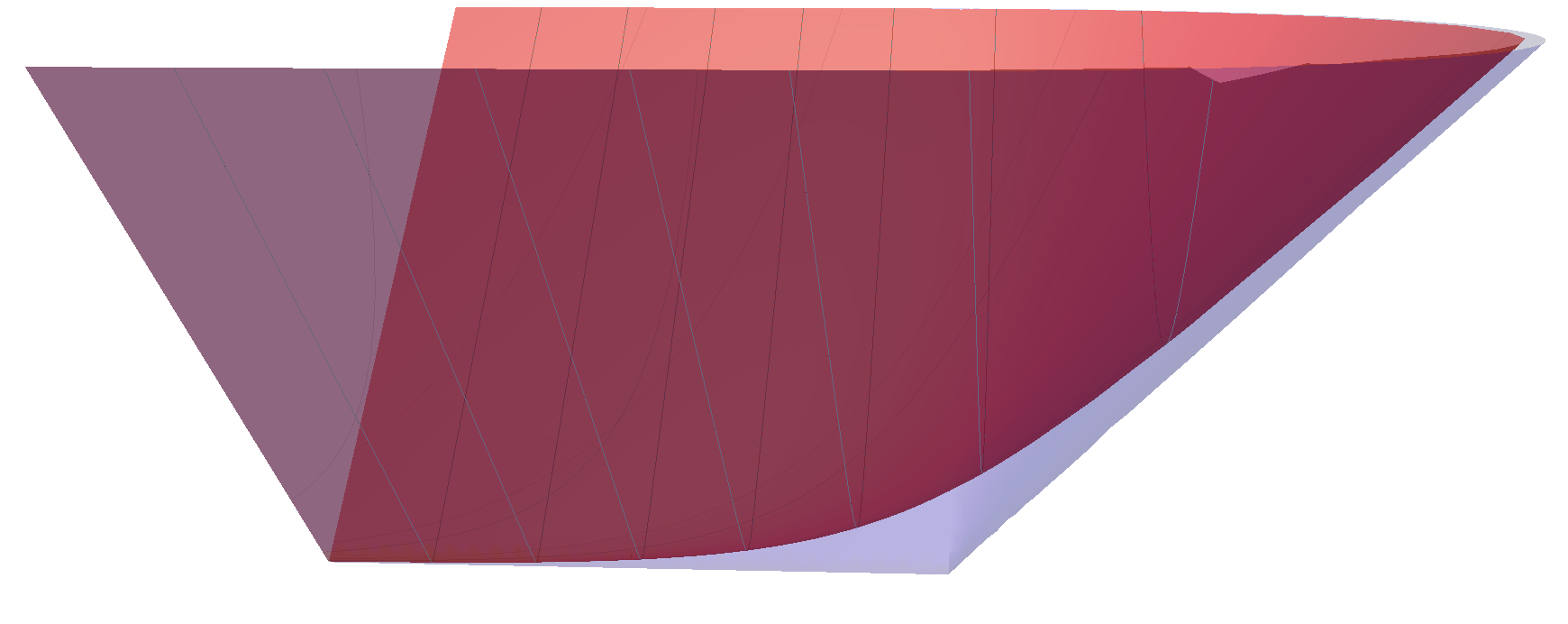}
\caption{The surface of revolution $\Sigma_0^K$, corresponding to the choice $r(t)=e^t$. (Here $K=1$.) The domain of dependence is the future of a half-line.  \label{fig:revolution}}
\end{figure}

\end{example}

\subsection{Image of the Gauss map} \label{subsec image gauss}
We will now prove the following theorem, which is a refined version of Theorem \ref{thm gauss image}, and gives a complete description of the image of the Gauss map of a CGC entire surface in $\R^{2,1}$.

\begin{theorem} \label{prop constant curvature convex hull1} \label{prop constant curvature convex hull2}
Let $\Sigma$ be an entire spacelike surface in $\R^{2,1}$ with Gaussian curvature bounded from above and below by positive constants. Let $u_\Sigma:\D\to\R\cup\{+\infty\}$ be the support function of $\Sigma$. Then
\begin{itemize}
\item The essential domain of $u_\Sigma$, i.e. the set on which $u_\Sigma$ is finite, coincides with the convex hull of $\{\xi\in\partial\D\,|\,u_\Sigma(\xi)<+\infty\}$.
\item For every segment of $\partial\Omega_\Sigma$ with endpoints $\xi_1,\xi_2\in\partial\D$, $u_\Sigma$ restricted to the chord $[\xi_1, \xi_2]$ is the convex envelope of $u_\Sigma|_{\{\xi_1,\xi_2\}}$.
\end{itemize}
\end{theorem}

The second bullet point means that if either $u_\Sigma(\xi_1)$ or $u_\Sigma(\xi_2)$ is infinite then $u_\Sigma$ is infinite on the open chord, and otherwise it is the unique affine function interpolating the values at the endpoints (compare the comment following Proposition \ref{prop reg domain conv}).

% Recall that the convex hull of a set $A\subset\partial\D$ is defined as:
% $$\mathcal C(A)=\{\mathsf z\in\overline\D\,:\,\mathsf z=t\xi+(1-t)\xi',\,t\in[0,1],\,\xi,\xi'\in A\}~.$$
% Thus observe that the first point of Theorem \ref{prop constant curvature convex hull1} implies that every connected component $c$ of $\partial\Omega_\Sigma$ is a line segment in $\D$. Then  
% $$\mathsf z\mapsto \frac{a_1|\mathsf z-\xi_1|+a_2|\mathsf z-\xi_2|}{|\xi_1-\xi_2|}$$
% is the unique affine function on $c$ which has limit $a_i$ as $\mathsf z\to\xi_i$, for $i=1,2$, where $\xi_1$ and $\xi_2$ are the endpoints of $c$ in $\partial\D$. If $u_\Sigma(\xi_1)=+\infty$ and/or  $u_\Sigma(\xi_2)=+\infty$, then we stipulate that the above expression means that 
% $$(u_\Sigma)|_c=+\infty~.$$
% $$(u_\Sigma)|_c=\frac{u_\Sigma(\xi_1)|\mathsf z-\xi_1|+u_\Sigma(\xi_2)|\mathsf z-\xi_2|}{|\xi_1-\xi_2|}~.$$

\begin{proof}[Proof of Theorem \ref{prop constant curvature convex hull1}]

Let $\mathcal{C}$ be the convex hull of $\{\xi\in\partial\D\,:\,u_\Sigma(\xi)<+\infty\}$. Let $K_0$ be a positive lower bound for the curvature of $\Sigma$. Let $$v(\mathsf z) = -\frac{1}{\sqrt{K_0}}\sqrt{1-|\mathsf z|^2}$$
be the support function of the hyperboloid $\mathrm{Hyp}^{K_0}$. Then $M\!A_{u_\Sigma}(\omega) \leq M\!A_v(\omega)$ for all Borel subsets $\omega \subset \Omega_{u_\Sigma}$ and $v$ is continuous on the closed disk and equal to 0 on the boundary. Hence by Proposition \ref{prop conv comparison}, we have $u_\Sigma \geq \mathrm{conv}(u_\Sigma|_{\partial \D}) + v$. By the remark following Proposition \ref{prop reg domain conv} the essential support of $\mathrm{conv}(u_\Sigma|_{\partial \D})$ is equal to $\mathcal C$. Since $v$ is finite everywhere, this shows that $u_\Sigma$ is infinite at every point outside $\mathcal{C}$. Since $u_\Sigma$ is convex, so is its essential domain. Therefore the essential domain of $u_\Sigma$ is exactly $\mathcal C$. This proves the first bullet point as well as the second bullet point in the case where $u_\Sigma$ is infinite at either of the two endpoints $\xi_i$.

To complete the proof of the theorem, we need only consider the case where both $u_\Sigma(\xi_1)$ and $u_\Sigma(\xi_2)$ are finite for a segment $[\xi_1,\xi_2]$ of $\partial \Omega_\Sigma$. Up to composing $\Sigma$ with an isometry of $\R^{2,1}$, we can assume $\xi_1=(0,-1)$, $\xi_2=(0,1)$ and that $\mathcal C$ is contained in $\{x\geq 0\}$. 
%Moreover, we can assume that $u_\Sigma(\xi_1) = u_\Sigma(\xi_2) = 0$. (In fact, linear isometries of $\SO(2,1)$ change the support function by pre-composition with the corresponding isometry of $\Hyp^2$ in the Klein model $\D$, and translations change $u_\Sigma$ by affine functions.)
We will show that $u_\Sigma(0,y) = \mathrm{conv}(u_\Sigma|_{\partial\D})(0,y)$ for every $y \in [-1,1]$. 

Let $\Sigma^{K_0}_0$ be the function constructed in Example \ref{ex CGC in half-disk}, whose support function $u_0$ is a solution to the Monge Amp\`{e}re equation \eqref{eq monge constant curvature} on the right half-disk $\D_+$ with $u_0|_{\partial \D_+} = 0$. Let $A_\epsilon$ be the linear hyperbolic transformation of length $\epsilon$ with attracting fixed point $(-1,0)$ and repelling fixed point $(1,0)$. Let $u_\epsilon$ be the support function of $A_\epsilon(\Sigma_0^{K_0})$. Explicitly, in coordinates $(x,y)$ on the disk \cite[Lemma 3.4]{bonfill},
\begin{equation} \label{eq action on support functions}
u_\epsilon(x,y) = (\cosh(\epsilon)+ x\sinh(\epsilon))\ u_0\!\left(\frac{x\cosh(\epsilon)+ \sinh(\epsilon)}{\cosh(\epsilon)+ x\sinh(\epsilon)},\frac{y}{\cosh(\epsilon)+ x\sinh(\epsilon)}\right)~.
\end{equation}
Observe that $u_\epsilon$ is equal to zero on the boundary of the half disk $\D_+^\epsilon$ bounded by the chord $$[(-\tanh(\epsilon),\mathrm{sech}(\epsilon)),(-\tanh(\epsilon),-\mathrm{sech}(\epsilon))]$$
By Proposition \ref{prop conv comparison} applied to $\D_+^\epsilon$, we have that $$u_\Sigma \geq \mathrm{conv}(u_\Sigma|_{\partial \D_+^\epsilon}) + u_\epsilon \quad \textrm{for all $\epsilon$.}$$ Since $u_\Sigma$ is equal to $+\infty$ on the left half-disk, in fact $\mathrm{conv}(u_\Sigma|_{\partial \D_+^\epsilon}) = \mathrm{conv}(u_\Sigma|_{\partial \D})$. Now we take the limit as $\epsilon \to 0$ and use the continuity of $u_0$ on $\overline{\D}_+$ to conclude that $$u_\Sigma \geq \mathrm{conv}(u_\Sigma|_{\partial \D}) + u_0~.$$
Since $u_0$ is zero on the $y$-axis, we conclude that  $u_\Sigma = \mathrm{conv}(u_\Sigma|_{\partial\D})$ on the $y$ axis. The other inequality follows from convexity of $u_\Sigma$.
\end{proof}

\begin{remark} The reason why in the last part of the proof of the previous theorem we did not apply directly Proposition 3.12  to the domain $\mathbb D_+$ is that
this would lead to the following inequality $u_\Sigma\geq \mathrm{conv}(u_\Sigma|_{\partial \mathbb D_+})+u_0$.
However, since the restriction $u_\Sigma$ to $\partial\mathbb D_+\setminus\partial\mathbb D$ is not constantly $+\infty$, 
we can no longer  argue that $\mathrm{conv}(u_\Sigma|_{\partial \mathbb D_+})$ coincides with $\mathrm{conv}(u_\Sigma|_{\partial \mathbb D})$.
So the previous  estimate is not useful to control $u_\Sigma$ a priori on the $y$-axis.
\end{remark}

We then have the following corollary of the results of the previous subsection:
\begin{repcor}{cor image min lag map} 
Let $F:\Hyp^2\to\Hyp^2$ be a minimal Lagrangian map. Then the image $F(\Hyp^2)$ coincides with the interior of the convex hull of 
$\overline{F(\Hyp^2)}\cap \partial\Hyp^2$.
\end{repcor}
\begin{proof}
By Lemma \ref{rmk min lag realizable}, $F$ can be realized as the Gauss map of a CGC-$K$ surface $\Sigma$ in $\R^{2,1}$, which is entire. Indeed the first fundamental form coincides with the metric on the source, and is therefore complete. Therefore $\Sigma$ is entire by Remark \ref{rmk complete implies entire}. Hence, by applying again Theorem \ref{prop constant curvature convex hull1}, the image of the Gauss map of $\Sigma$ (which is identified with $F$) coincides with the convex hull of $\overline{F(\Hyp^2)}\cap \partial\Hyp^2$.
\end{proof}

\section{The case of the ideal triangle} \label{sec triangular surfaces}

In this section, we will consider the special case of the regular domain $\mathcal D_\varphi$ in $\R^{2,1}$, where $\varphi$ takes finite value on precisely three distinct points $\xi_1,\xi_2,\xi_3$ of $\partial \D$, and $\varphi(\xi)=+\infty$ otherwise. This regular domain $\mathcal D_\varphi$ is the intersection of three half-spaces bounded by null planes. We call such a regular domain a triangular domain (See Figure \ref{fig:triangulardomain}).

The purpose of this section is to prove the following:

\begin{prop} \label{prop foliation triangular domain}
Let $K \in (0,+\infty)$ and let $\ph:\partial \D\to\R\cup\{+\infty\}$ be a function with $\ph(\xi_1),\ph(\xi_2),\ph(\xi_3)<+\infty$ and $\ph(\xi)=+\infty$ otherwise. Then the domain $\mathcal D_\ph$ is the domain of dependence of an entire CGC-$K$ surface.  
\end{prop} 

The linear isometry group $\SO_0(2,1)$ acts simply transitively on triples of null planes intersecting at the origin. Furthermore, any triple of nonparallel null planes intersect at a point. Therefore, up to the action of the \emph{affine} isometry group of $\R^{2,1}$, all triangular domains are equivalent to each other. Therefore, it is enough to produce a CGC-$K$ surface in any single triangular domain $\mathcal D_\ph$. Moreover, using also homethety, it suffices to produce a CGC-1 surface with this property.

\begin{figure}[htb]
\centering
\includegraphics[height=4cm]{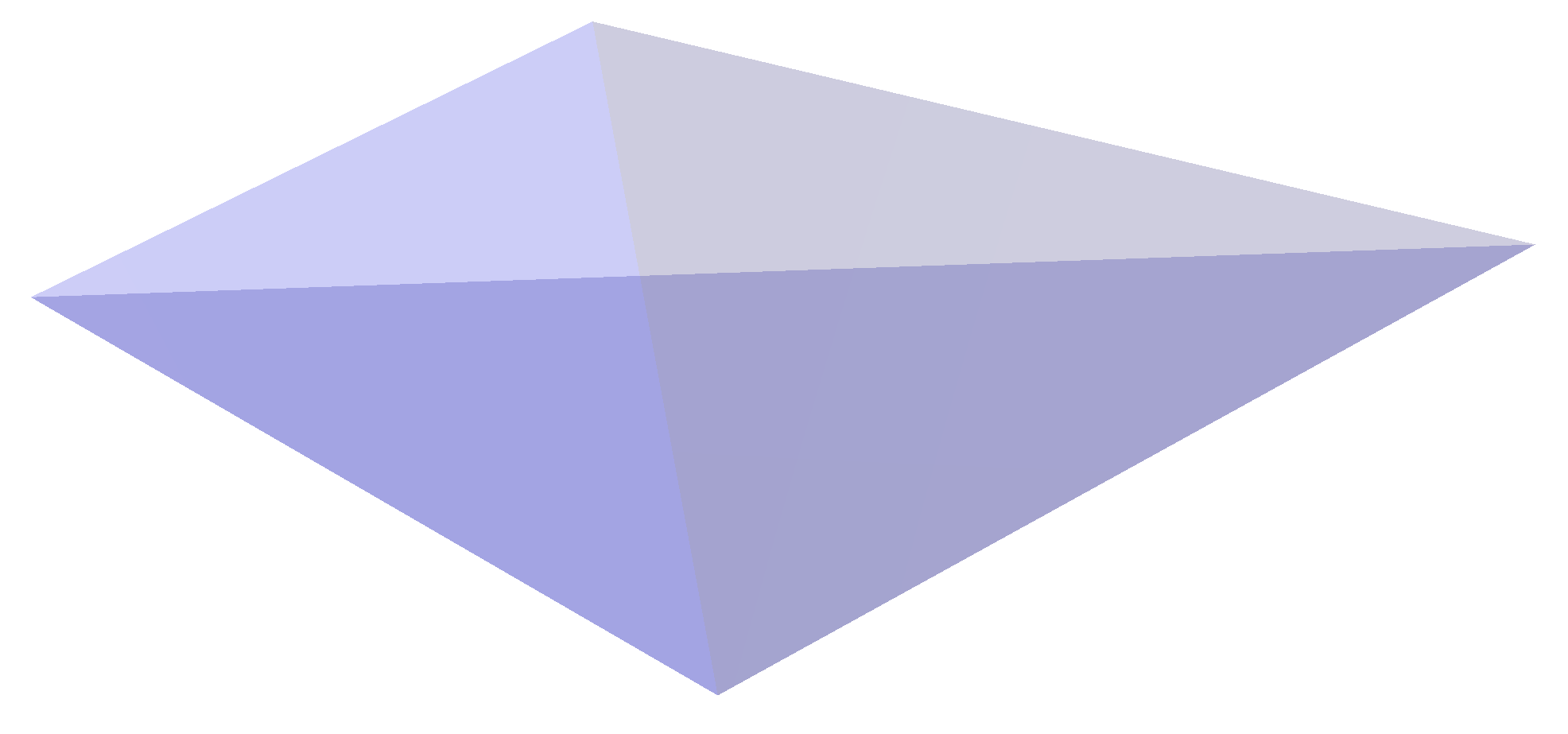}
\caption{When $\varphi$ is finite on exactly three points, the domain of dependence  $\mathcal D_\varphi$ is the intersection of the future of three null planes. \label{fig:triangulardomain}}
\end{figure}

\subsection{Harmonic maps to an ideal triangle}

We will construct such a surface $\Sigma$ by using the correspondence between minimal Lagrangian maps and CGC-K surfaces described in Lemma \ref{rmk min lag realizable}. In fact, we will construct a minimal Lagrangian map by way of harmonic maps.

From the classical theory of harmonic maps, a harmonic map $f$ from $\C$ to $\Hyp^2$ is determined up to isometries of $\Hyp^2$ by the Hopf differential $\Phi_f$ and the holomorphic energy density $\mathscr H_f$, which are defined by
\begin{align*}
\Phi_f &= (f^*h_{\Hyp^2})^{2,0} = \langle \partial f, \overline{\partial} f\rangle~,\\
\mathscr{H}_f &=||\partial f||^2 = \langle \partial f, \partial f\rangle~,
\end{align*}
where we use the decomposition $df=\partial f+\overline\partial f$. It is well-known that $\Phi_f$ is a holomorphic quadratic differential. Setting $\mathscr{H}_f = e^{2h}$ and $\Phi_f = \phi(\mathsf z)d\mathsf z^2$, the function $h$ satisfies the Bochner equation:
\begin{equation} \label{eq bochner}
\Delta h=e^{2h}-\frac{|\phi|^2}{e^{2h}}~.
\end{equation}

% Composing with a translation, we can assume that the three null planes meet at the origin --- which is equivalent to assuming $\varphi(\xi_1)=\varphi(\xi_2)=\varphi(\xi_3)=0$. Moreover, composing with a linear isometry of $\R^{2,1}$, the triplet $(\xi_1,\xi_2,\xi_3)$ can be chosen arbitrarily on $\partial \D$. For instance, we can assume $\xi_i=\omega^i$, where $\omega\in \C$ is a non-trivial cubic root of the identity. 

% Hence, we can assume $\ph=\ph_0$, where $\ph_0$ is defined by
% \begin{equation} \label{eq assumption phi}
% \varphi_0(\xi)=\begin{cases}
% 0 & \textrm{if }\xi=1,\omega,\omega^2 \\
% +\infty & \textrm{otherwise}
% \end{cases}~.
% \end{equation}

\begin{prop}[{\cite{labourieCP}}] \label{prop min lag decomposition}
Given two hyperbolic surfaces $(S,h)$ and $(S',h')$, a diffeomorphism $F:(S,h)\to (S',h')$ is minimal Lagrangian if and only if there exist harmonic diffeomorphisms $f:(S_0,X_0)\to (S,h)$ and $f':(S_0,X_0)\to (S',h')$, where $(S_0,X_0)$ is a Riemann surface, such that:
\begin{enumerate}
\item $F=f'\circ f^{-1}$,
\item $\Phi_f=-\Phi_{f'}$, and
\item $\mathscr{H}_f = \mathscr{H}_{f'}$.
\end{enumerate}
\end{prop}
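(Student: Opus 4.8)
The plan is to prove the two implications by transporting the data to the common Riemann surface $X_0$, thereby reducing the statement to a dictionary between a hyperbolic metric carrying a positive self-adjoint Codazzi tensor of determinant one and a harmonic diffeomorphism with prescribed Hopf differential. The underlying observation is that a harmonic diffeomorphism $f\colon X_0\to (S,h)$ is an \emph{isometry} $(S_0,f^*h)\to (S,h)$, so it identifies $\nabla_h$ with $\nabla_{f^*h}$ and any tensor $b$ on $S$ with its pullback $\tilde b$; hence the Codazzi equation $d^{\nabla_h}b=0$ becomes an intrinsic condition on $X_0$. Moreover, decomposing $f^*h$ according to the complex structure of $X_0$ gives $f^*h=\Phi_f+\overline{\Phi_f}+e(f)\,g_0$, where $g_0$ is the metric in the conformal class with $\operatorname{tr}_{g_0}g_0=2$, $e(f)=\mathscr H_f+\mathscr L_f$, and $\mathscr L_f=\|\overline{\partial} f\|^2$; recall the identities $\mathscr H_f\mathscr L_f=|\Phi_f|^2$ and $\mathscr H_f-\mathscr L_f=\operatorname{Jac}(f)$.

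For the implication ($\Leftarrow$), set $g_1=f^*h$ and $g_2=(f')^*h'$ on $S_0$. Because $\Phi_{f'}=-\Phi_f$ and $\mathscr H_{f'}=\mathscr H_f$ (hence also $\mathscr L_{f'}=\mathscr L_f$, by $\mathscr H\mathscr L=|\Phi|^2$), the two metrics have the \emph{same} $(1,1)$-part and opposite $(2,0)$-parts: $g_1=(\Phi_f+\overline{\Phi_f})+e\,g_0$ and $g_2=-(\Phi_f+\overline{\Phi_f})+e\,g_0$ with $e=\mathscr H_f+\mathscr L_f$. In particular $g_1$ and $g_2$ have the same area form, so the positive $g_1$-self-adjoint tensor $\tilde b$ defined by $g_2=g_1(\tilde b\,\cdot,\tilde b\,\cdot)$ satisfies $\det\tilde b=1$. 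Transporting $\tilde b$ to $S$ through the isometry $f$ yields a positive $h$-self-adjoint tensor $b$ with $F^*h'=h(b\,\cdot,b\,\cdot)$, and it remains only to verify $d^{\nabla_{g_1}}\tilde b=0$. Since $g_1$ and $\tilde b$ are explicit functions of the holomorphic quadratic differential $\Phi_f$ and of $\mathscr H_f=e^{2h}$, this Codazzi identity reduces, in a local conformal coordinate, to the holomorphicity $\partial_{\bar z}\phi=0$ of $\Phi_f$ together with the Bochner equation \eqref{eq bochner}, both of which hold. Hence $b$ is Codazzi and $F$ is minimal Lagrangian.

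For the implication ($\Rightarrow$), let $F$ be minimal Lagrangian with associated tensor $b$; by Remark \ref{remark det = 1}, $\det b=1$. Put $g_1=h$ and $g_2=F^*h'=h(b^2\cdot,\cdot)$ on $S$, let $X_0$ be $S$ equipped with the conformal class of $g_1+g_2=h((\mathrm{Id}+b^2)\cdot,\cdot)$, and take $f=\mathrm{id}\colon X_0\to(S,h)$ and $f'=F\colon X_0\to(S',h')$, so that $F=f'\circ f^{-1}$ tautologically. Since $g_1+g_2$ is conformal, $(g_1+g_2)^{(2,0)}=0$, hence $\Phi_{f'}=(g_2)^{(2,0)}=-(g_1)^{(2,0)}=-\Phi_f$. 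The equality $\mathscr H_f=\mathscr H_{f'}$ amounts to $e(f)=e(f')$ together with $\operatorname{Jac}(f)=\operatorname{Jac}(f')$; diagonalizing $b$ with eigenvalues $\beta,\beta^{-1}$ one computes $e(f)-e(f')=2\sum_i\tfrac{1-\beta_i^2}{1+\beta_i^2}=0$ and $\operatorname{Jac}(f)^2/\operatorname{Jac}(f')^2=\prod_i\beta_i^2=1$, both using $\det b=1$. Finally, $f$ and $f'$ must be shown harmonic: as above, this is equivalent to the holomorphicity of $\Phi_f$ on $X_0$ together with the Bochner equation, which is precisely what $d^{\nabla_h}b=0$ encodes for this choice of conformal structure.

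The only non-formal point, and hence the main obstacle, is the equivalence used in both directions between the Codazzi equation $d^{\nabla}b=0$ for the self-adjoint tensor $b$ and the harmonicity of the associated maps — equivalently, between $d^{\nabla}b=0$ and the pair (holomorphic Hopf differential, Bochner/tension-field equation). This is a classical local computation in the circle of ideas relating harmonic maps to CMC- and CGC-type surfaces, going back to Labourie (cf.\ \cite{labourieCP}): one writes $d^{\nabla}b$ in a conformal coordinate, splits it into its anti-holomorphic-derivative part and its trace part, and matches these with $\overline{\partial}\Phi_f$ and the tension field of $f$ respectively. Apart from this, one needs only the elementary verifications that the metrics $g_1+g_2$ built in the construction are genuine metrics — which holds because $b>0$, equivalently because $F$ is a local diffeomorphism and so $\operatorname{Jac}(f)=\mathscr H_f-\mathscr L_f\neq 0$ — and that $f,f'$ are diffeomorphisms, which is automatic since $f=\mathrm{id}$ and $f'=F$.
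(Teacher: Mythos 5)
The paper gives no proof of this proposition; it is stated with only a citation to \cite{labourieCP}, so there is no internal argument to compare against. Your sketch reconstructs the standard Labourie argument correctly: transport everything to $X_0$, read the Codazzi tensor (in one direction) and the harmonicity (in the other) off the $(2,0)$ and $(1,1)$ parts of the two pullback metrics, verify $\det \tilde b = 1$ from the opposite Hopf differentials and equal energies, and defer the central equivalence to a local computation. The eigenvalue computations for $e(f)=e(f')$ and the choice of $X_0$ as the conformal class of $g_1+g_2 = h\bigl((\mathrm{Id}+b^2)\,\cdot\,,\,\cdot\,\bigr)$ --- which is exactly the induced metric on the minimal graph of $F$ in $(S,h)\times(S',h')$ --- are both right.

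Two small points. First, the normalization ``$\operatorname{tr}_{g_0}g_0=2$'' holds for \emph{any} metric in dimension two and so does not single out $g_0$; but since only conformally invariant combinations enter (the quadratic differential $\Phi_f$, the equalities $\Phi_f=-\Phi_{f'}$ and $\mathscr H_f=\mathscr H_{f'}$ both taken with respect to one common choice of conformal metric on $S_0$), this is harmless. Second, you slightly misdescribe the role of the Bochner equation. In the ($\Leftarrow$) direction it is legitimately available as input, because $f$ is given to be harmonic. But in the ($\Rightarrow$) direction the Bochner equation is not something you extract from $d^{\nabla_h}b=0$; rather, $d^{\nabla_h}b=0$ is what produces holomorphicity of the $(2,0)$-part $\Phi_f$ in the chosen conformal structure, which combined with the fact that $f$ is a local diffeomorphism (so $\mathscr H_f\neq\mathscr L_f$, hence $\partial f$ and $\overline\partial f$ are independent) gives $\tau(f)=0$; the Bochner equation is then an automatic \emph{consequence} of harmonicity and the curvature of the target, not an independent condition to check. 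The clean statement of the deferred step is therefore: (a) for a local diffeomorphism of surfaces, $\tau(f)=0$ iff $\overline\partial\Phi_f=0$; and (b) with $J$ the conformal class of $h\bigl((\mathrm{Id}+b^2)\,\cdot\,,\,\cdot\,\bigr)$, the Codazzi equation $d^{\nabla_h}b=0$ is equivalent to holomorphicity of $h^{(2,0)}$ with respect to $J$. Granting (a) and (b), which are indeed the classical Labourie/Schoen local computations, your argument is complete.
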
 

If there exists a CGC-1 surface $\Sigma$ whose domain of dependence is a triangular domain, then by Theorem \ref{thm gauss image} the image of the Gauss map $G_{\Sigma}:\Sigma\to\Hyp^2$ must be an ideal triangle in $\Hyp^2$. From the results of \cite{hantamtreibergswan}, it is known that if an harmonic map $f:\C\to\Hyp^2$ has polynomial Hopf differential of degree $n$, then its image is an ideal polygon with $n+2$ vertices. Hence our strategy is to consider a minimal Lagrangian diffeomorphism $F_0=f_0'\circ f_0^{-1}$, where $f_0,f_0':\C\to\Hyp^2$
are harmonic maps with Hopf differentials
$$\Phi_0=\mathrm{Hopf}(f_0)=-\mathsf zd\mathsf z^2\qquad \Phi'_0=\mathrm{Hopf}(f'_0)=\mathsf zd\mathsf z^2~.$$
There is a natural choice of solution to the Bochner equation on $\C$ with Hopf differential $\pm\Phi_0$, given by the following result:

\begin{theorem}[{\cite{wanau}}] \label{thm wanau}
Let $\Phi$ be a holomorphic quadratic differential on $\C$ which is not identically zero. Then there exists a unique smooth function $h:\C\to\R$ which solves
$$\Delta h=e^{2h}-|\phi|^2e^{-2h}~,$$
such that $e^{2h}-|\phi|^2e^{-2h}>0$ and the Riemannian metric $e^{2h}|d\mathsf z|^2$ is complete.
\end{theorem}

\begin{remark} \label{rmk CMC} Associated to the harmonic map $f'_0$ is also a constant mean curvature spacelike immersion $\sigma_H: \C \to \R^{2,1}$, which is conformal and also has $f'_0$ as its Gauss map (see for example \cite{choitreibergs}). The induced metric on $\sigma_H(\C)$ is $e^{2h}|d\mathsf z|^2$. Therefore, the solution $h$ given by Theorem \ref{thm wanau} has the property that $\sigma_H(\C)$ is complete, and hence properly embedded. Choosing a normalization so that the mean curvature of $\sigma_H$ is 1/2, the CGC-1 immersion $\sigma$ is given by the formula $\sigma(\mathsf z) = \sigma_H(\mathsf z) - G(\sigma_H(\mathsf z))$, where $G$ is the Gauss map of $\sigma_H$, taking values in $\Hyp^2 \subset \R^{2,1}$. This classical observation also holds in Euclidean space. In a future paper, we will use it to derive similar results for constant mean curvature surfaces.
\end{remark}

Let $h_0$ be the solution of the Bochner equation with Hopf differential $\pm\Phi_0$ guaranteed by Theorem \ref{thm wanau}. By Proposition \ref{prop min lag decomposition}, this determines up to isometry a minimal Lagrangian diffeomorphism $F_0=f_0'\circ f_0^{-1}$, where $f_0,f_0':\C\to\Hyp^2$. By Lemma \ref{rmk min lag realizable}, this in turn determines up to isometry a CGC-1 spacelike immersion $\sigma_0: \C \to \R^{2,1}$. According to the proof of Lemma \ref{rmk min lag realizable}, the immersion data $(\I,B)$ of $\sigma_0$ are uniquely determined by
\begin{equation} \label{eq I and B}
\begin{split}
\I &= f_0^*h_{\Hyp^2}~, \\
(f'_0)^*h_{\Hyp^2}(\cdot, \cdot) &= f_0^*h_{\Hyp^2}(B\cdot, B\cdot)~, \\
\end{split}
\end{equation}
and the condition that $B$ is positive and symmetric for $f_0^*(h_{\Hyp^2})$. In the following, we will express $\I$ and $B$ explicitly.

\subsection{An expression for the embedding data}\label{subsec comp embedding data}

We will ultimately show that the CGC-1 surface $\sigma_0(\C)$ is entire; for this, we will need to analyze the asymptotic behavior of $\sigma_0$ and in particular of the function $h_0$. To this end, it is useful to introduce the local chart $$\mathsf w=(2/3)\mathsf z^{3/2}~.$$ This means that $\mathsf w$ is a branch of square root of $\mathsf z^3$, up to the factor $2/3$. We remark that $\mathsf w$ gives a chart on any sector of angle less than $4\pi/3$. Since $\Phi_0=-\mathsf zd\mathsf z^2$ has an order 3 rotational symmetry, and the uniqueness part of Theorem \ref{thm wanau} implies that $h_0$ has the same symmetry, the parameter $\mathsf w$ will be sufficient to understand the whole geometry of the problem. 

\begin{remark}In fact, $h_0$ is totally rotationally symmetric since the magniture $|\phi|^2$ is the only contribution to the Bochner equation. Even though this remark is not strictly necessary for any of our results, it is worth pointing out that it reduces the construction of $\sigma_0$ to the solution of an \emph{ordinary} differential equation.
\end{remark}

We now give expressions for the embedding data $(\I,B)$ of $\sigma_0$ in a $\mathsf w$ coordinate chart. First note that with respect to this coordinate $\Phi_0=-d\mathsf w^2$. Moreover, the logarithmic holomorphic energy density $\tilde{h}_0$ with respect to the $\mathsf w$ chart is related to $h_0$ by
$$\tilde{h}_0 = h_0 - \frac{1}{2} \log |\mathsf z|$$
and by Equation \eqref{eq bochner} with $\phi = 1$ it satisfies the Bochner equation $\Delta \tilde{h}_0 = 2 \sinh(2\tilde{h}_0)$, where the Laplacian is with respect to the metric $|d\mathsf w|^2$.

The first fundamental form is by construction
$$\I = f_0^*h_{\Hyp^2} = - d\mathsf w^2 + e|d\mathsf w|^2 -  d \overline{\mathsf w}^2~,$$
where $e$ is the energy density of $f_0$ with respect to the flat metric $|d \mathsf w|^2$. Then, using the equations $e=\mathscr H+\mathscr L$, where $\mathscr L$ is the anti-holomorphic energy density, and in the $\mathsf w$ coordinate $\mathscr H\!\mathscr L=|\phi|^2 = 1$, we have
$$
e = e^{2\tilde{h}_0}+ e^{-2\tilde{h}_0} = 2 \cosh(2\tilde{h}_0),
$$
where $\tilde{h}_0$ is the logarithmic holomorphic energy density in the $\mathsf w$ coordinate. Similarly, the third fundamental form is given by
$$\III = (f'_0)^*h_{\Hyp^2} = d\mathsf w^2 + 2\cosh(2\tilde{h}_0)|d\mathsf w|^2 + d \overline{\mathsf w}^2~.$$
To write $B$ in coordinates, it is helpful to introduce the coordinates $\mathsf w=u+iv$, so that $d\mathsf w^2 + d \overline{\mathsf w}^2 = 2(du^2 - dv^2)$. Then we obtain:

\begin{equation} \label{eq first fund form w}
\begin{split}
\I = f_0^*h_{\Hyp^2}&=2\cosh(2\tilde{h}_0)(du^2+dv^2)-2(du^2-dv^2) \\
&=(2\sinh(\tilde{h}_0))^2du^2+(2\cosh(\tilde{h}_0))^2dv^2
\end{split}
\end{equation}
and similarly
\begin{equation}\label{eq third fund form w}
\begin{split}
\III = (f'_0)^*h_{\Hyp^2}&=2\cosh(2\tilde{h}_0)(du^2+dv^2)+2(du^2-dv^2) \\
&=(2\cosh(\tilde{h}_0))^2du^2+(2\sinh(\tilde{h}_0))^2dv^2~.
\end{split}
\end{equation}
Now it is easy to see that from Equation (\ref{eq I and B}) that
\begin{equation} \label{eq shape op w}
B=\coth(\tilde{h}_0)du\otimes \frac{\partial}{\partial u} + \tanh(\tilde{h}_0)dv\otimes \frac{\partial}{\partial v}~.
\end{equation}

\subsection{A priori estimates for Bochner equation}

In this section we provide the estimates for $\tilde{h}_0$ which will allow us to conclude that $\sigma_0: \C \to \R^{2,1}$ is a proper embedding.
% Recall that $\tilde{h}_0$ is rotationally symmetric about the origin. 
Let $r = |\mathsf w|$ be the radial coordinate with respect to the $\mathsf w$ chart. A particular case of \cite[Lemma 1.2]{hantamtreibergswan} provides an a priori bound on such rotationally invariant solution. 

\begin{lemma} \label{lemma a priori1 bochner} There exist constants $C>0$ and $r_0>0$ such that 
$$0\leq \tilde{h}_0(\mathsf w) \leq e^{-Cr}~,$$
as long as $|\mathsf w|\geq r_0$.
\end{lemma}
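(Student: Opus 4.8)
The plan is to exploit the rotational symmetry to reduce the estimate to a one–dimensional comparison. Since $h_0$, hence $\tilde h_0$, depends only on $|\mathsf z|$ and therefore only on $r=|\mathsf w|$, the Bochner equation $\Delta\tilde h_0=2\sinh(2\tilde h_0)$ becomes the radial ODE
\[
\tilde h_0''(r)+\tfrac1r\,\tilde h_0'(r)=2\sinh\bigl(2\tilde h_0(r)\bigr),\qquad r>0.
\]
The lower bound $\tilde h_0\ge 0$ is immediate from Theorem \ref{thm wanau}: the solution satisfies $e^{2h_0}-|\mathsf z|^{2}e^{-2h_0}>0$, which in the $\mathsf w$-chart reads $\sinh(2\tilde h_0)>0$, i.e. $\tilde h_0>0$ on $\C\setminus\{0\}$. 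In particular $\tilde h_0$ is subharmonic, so $r\mapsto r\,\tilde h_0'(r)$ is nondecreasing.

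The first substantive step is to show $\tilde h_0(r)\to 0$ as $r\to+\infty$. For this I would use that $f_0$ is a harmonic diffeomorphism onto an ideal triangle $T$ (by the results of \cite{hantamtreibergswan} invoked above), so that $\I=f_0^{*}h_{\Hyp^2}$ is a metric on $\C$ of finite total area $\Area(T)=\pi$; writing this area in the $\mathsf w$-coordinate via \eqref{eq first fund form w} gives $\int_0^{\infty} r\sinh(2\tilde h_0(r))\,dr<\infty$, hence $\int_0^{\infty} r\,\tilde h_0(r)\,dr<\infty$. Since $\tilde h_0\ge 0$, this forces $\tilde h_0(r_n)\to 0$ along some sequence $r_n\to\infty$. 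Because $r\,\tilde h_0'$ is nondecreasing, $\tilde h_0'$ cannot be positive anywhere (otherwise $r\,\tilde h_0'$ would stay bounded below by a positive constant and $\tilde h_0$ would grow at least logarithmically, contradicting the subsequence), so $\tilde h_0$ is nonincreasing; being nonnegative it converges, and the subsequence forces the limit to be $0$. Equivalently, this is precisely the content of the quoted special case of \cite[Lemma 1.2]{hantamtreibergswan}.

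Given $\tilde h_0\downarrow 0$, the exponential rate comes from a barrier argument for a linear operator. From $\sinh x\ge x$ for $x\ge 0$ we have $\Delta\tilde h_0=2\sinh(2\tilde h_0)\ge 4\tilde h_0$, while the explicit radial function $\bar h(r)=A e^{-r}$ satisfies $\Delta\bar h=A e^{-r}\bigl(1-\tfrac1r\bigr)\le 4\bar h$ once $r\ge 1$. Fix $r_0\ge 1$ and take $A\ge \tilde h_0(r_0)\,e^{r_0}$. Then $w:=\tilde h_0-\bar h$ satisfies $\Delta w\ge 4w$ on $\{r>r_0\}$, with $w(r_0)\le 0$ and $w(r)\to 0$ as $r\to\infty$. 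Since the operator $\Delta-4$ admits no positive interior maximum, on each annulus $\{r_0\le r\le R\}$ the function $w$ attains its maximum on the boundary; letting $R\to\infty$ yields $w\le 0$, i.e. $\tilde h_0(r)\le A e^{-r}$ for $r\ge r_0$. Enlarging $r_0$ absorbs $A$ into the exponential and gives $\tilde h_0(r)\le e^{-Cr}$ for a suitable $C>0$ and all $r\ge r_0$.

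I expect the crux to be the convergence $\tilde h_0\to 0$ at infinity — equivalently, the fact that the Wan--Au solution is asymptotic to the model solution $\tfrac12\log|\mathsf z|$. This is the genuine analytic input, and is where one must either appeal to \cite{hantamtreibergswan} or supply the finite-area argument sketched above; the positivity, the eventual monotonicity, and the linear barrier together with the maximum principle for $\Delta-c$ with $c>0$ on an exhausting family of annuli are all routine once that asymptotic is in hand.
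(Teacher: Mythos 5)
Your argument is correct, but it is genuinely different from what the paper does: the paper offers no proof at all here, simply quoting the estimate as a special case of \cite[Lemma 1.2]{hantamtreibergswan}, whereas you essentially reprove that special case from scratch. Your three ingredients all check out: positivity of $\tilde h_0$ is indeed the coordinate-invariant statement $e^{2h}-|\phi|^2e^{-2h}>0$ from Theorem \ref{thm wanau}; the radial identity $(r\tilde h_0')'=2r\sinh(2\tilde h_0)>0$ plus the finite-area bound $\int_0^\infty r\sinh(2\tilde h_0)\,dr<\infty$ (the area form of $\I$ in the $\mathsf w$-chart is $2\sinh(2\tilde h_0)\,du\,dv$ by \eqref{eq first fund form w}, and the image of $f_0$ is an ideal triangle of area $\pi$) does force $\tilde h_0'\leq 0$ and $\tilde h_0\downarrow 0$; and the comparison of the subsolution $\tilde h_0$ of $\Delta u=4u$ (via $\sinh x\geq x$) with the supersolution $Ae^{-r}$ on annuli, using the maximum principle for $\Delta-4$ and the vanishing of both functions at infinity, gives the exponential bound, with any $C<1$ after enlarging $r_0$ (a sharper analysis would give the rate $e^{-2r}$, but the lemma only asks for some $C>0$). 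Note two things about what each route buys. Your barrier step is structurally the mirror image of the paper's own proof of the lower bound, Lemma \ref{lemma a priori2 bochner}, so the two bounds get a unified, self-contained treatment; on the other hand your decay-at-infinity step is not independent of \cite{hantamtreibergswan} either, since the finite-area input rests on their theorem that $f_0$ is a harmonic diffeomorphism onto an open ideal triangle (a fact the paper already invokes in this section), so you have traded one citation for a weaker one rather than eliminated it. If you wanted to be fully self-contained you would have to justify $\tilde h_0\to 0$ without the image statement, e.g. by the Omori--Yau/Cheng-type gradient estimates used in \cite{wanau,hantamtreibergswan}; as written, your proof is a clean and correct expansion of the paper's citation.
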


\begin{remark}
Recalling the expression (\ref{eq shape op w}) for the shape operator $B$, we see that the principal curvatures of $\sigma_0$,
$$\lambda=\coth(\tilde{h}_0)\qquad \mu=\tanh(\tilde{h}_0)~,$$
satisfy $\lambda\to + \infty$ and $\mu\to 0$ as $|\mathsf w|\to+\infty$.
\end{remark}

We will actually need a similar bound, but from below, on the function $\tilde{h}_0$, which we prove in the following lemma.

\begin{lemma} \label{lemma a priori2 bochner}
There exist constants $A>0$ and $r_0>0$ such that
$$\tilde{h}_0(\mathsf w)\geq \frac{A}{\sqrt{r}}e^{- 2r}~,$$
as long as $|\mathsf w|\geq r_0$.
\end{lemma}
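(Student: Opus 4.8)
The plan is to compare $\tilde{h}_0$ from below with an explicit radial subsolution of the Bochner equation on the exterior region $\{|\mathsf w|\ge r_0\}$. First note that in the $\mathsf w$ chart $\Phi_0=-\mathsf z\,d\mathsf z^2=-d\mathsf w^2$, so $|\phi|\equiv 1$ there, and the inequality $e^{2h}-|\phi|^2e^{-2h}>0$ of Theorem \ref{thm wanau} becomes $e^{2\tilde{h}_0}-e^{-2\tilde{h}_0}>0$; hence $\tilde{h}_0>0$ everywhere (in any case $\tilde{h}_0\ge 0$ also follows from Lemma \ref{lemma a priori1 bochner}). Being rotationally symmetric, $\tilde{h}_0$ solves the ODE $\tilde{h}_0''+\tfrac1r\tilde{h}_0'=2\sinh(2\tilde{h}_0)$. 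Linearizing $2\sinh(2t)\approx 4t$, the model equation $y''+\tfrac1r y'=4y$ is a rescaling of the modified Bessel equation of order $0$, whose decaying solution $K_0(2r)$ satisfies $K_0(2r)\sim\sqrt{\pi/(4r)}\,e^{-2r}$; this is what dictates the factor $r^{-1/2}$ in the claimed bound.

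Accordingly, set $g(r)=r^{-1/2}e^{-2r}$. A direct computation gives
$$\Delta g \;=\; g''+\tfrac1r g'\;=\;\Bigl(4+\tfrac{1}{4r^2}\Bigr)g~.$$
Since $2\sinh(2t)=4t+O(t^3)$ as $t\to 0$ while $g(r)\to 0$ faster than any power of $r$, for every $A\in(0,1]$ and all sufficiently large $r$ (with a threshold $r_1$ independent of $A$) one has
$$\Delta(Ag)-2\sinh(2Ag)\;\ge\;Ag\Bigl(\tfrac{1}{4r^2}-C\,A^2 g(r)^2\Bigr)\;\ge\;0~,$$
so $Ag$ is a subsolution on $\{r\ge r_1\}$. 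As $\tilde{h}_0(r_1)>0$, we then choose $A\in(0,1]$ small enough that $Ag(r_1)\le\tilde{h}_0(r_1)$.

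It remains to run the comparison principle. Put $w=Ag-\tilde{h}_0$ on $\{r\ge r_1\}$. Subtracting the two (in)equalities and using that $t\mapsto 2\sinh(2t)$ is increasing, one gets $\Delta w\ge c(\mathsf w)\,w$ with $c=\bigl(2\sinh(2Ag)-2\sinh(2\tilde{h}_0)\bigr)/\bigl(Ag-\tilde{h}_0\bigr)\ge 0$. Since $w\le Ag\to 0$ we have $\limsup_{r\to\infty}w\le 0$, while $w(r_1)\le 0$; hence if $\sup w>0$ it would be attained at an interior point of $\{r>r_1\}$, where $\Delta w\le 0$ and $w>0$ force $c=0$, i.e.\ $Ag=\tilde{h}_0$ there, a contradiction. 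Therefore $w\le 0$ on $\{r\ge r_1\}$, which is exactly $\tilde{h}_0(\mathsf w)\ge\frac{A}{\sqrt r}e^{-2r}$ for $|\mathsf w|\ge r_0:=r_1$.

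The main obstacle is constructing and verifying the subsolution: one must pinpoint the sharp decay rate $r^{-1/2}e^{-2r}$ coming from the Bessel asymptotics of the linearized operator, and then check that the positive surplus $\tfrac{1}{4r^2}g$ in $\Delta g$ genuinely dominates the cubic error $O((Ag)^3)$ from the Taylor expansion of $2\sinh$ — this is precisely what makes $g$ a \emph{sub}solution rather than merely a solution of the linearization. The maximum-principle step on the unbounded annulus is then routine, since both $\tilde{h}_0$ and the barrier vanish at infinity, so no artificial outer boundary condition is needed; equivalently, as everything is radial, the comparison can be carried out purely at the ODE level by integrating $(r\tilde{h}_0')'=2r\sinh(2\tilde{h}_0)$.
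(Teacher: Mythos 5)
Your proof is correct and follows essentially the same route as the paper: the same radial barrier $\frac{A}{\sqrt r}e^{-2r}$ with $\Delta v=(4+\tfrac{1}{4r^2})v$, the same observation that the cubic error in $2\sinh$ is beaten by the $\tfrac{1}{4r^2}$ surplus for $r$ large independently of $A\le 1$, the choice of $A$ at the inner circle, and a maximum-principle comparison on the exterior region using decay of both functions at infinity. The Bessel-function motivation for the $r^{-1/2}e^{-2r}$ rate is a pleasant addition but does not change the argument.
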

\begin{proof}
Recall that for $r > 0$, $\tilde{h}_0$
solves the PDE
$$\Delta \tilde{h}_0=2\sinh(2\tilde{h}_0)~.$$
Now, consider the function 
$$v= v(r)=\frac{A}{\sqrt{r}}e^{-2r}~.$$
Since $v(r)$ is rotationally symmetric, we have
$$\Delta v = v''(r) + \frac{v'(r)}{r}$$ %= \frac{1}{r^2}\left(r\frac{d}{dr}\right)^2 v
and by a direct computation we see
$$\Delta v = \left(4 + \frac{1}{4r^2}\right)v~.$$

On the other hand, for fixed $A$, if $r$ is large enough, $v(r)$ is smaller than any power of $1/r$. Therefore,using the Taylor expansion of the hyperbolic sine near zero, for large enough $r$, independent of $A$ so long as $A < 1$ say:
\begin{equation} \label{eq taylor sinh}
2\sinh 2 v\sim 4v+\frac{8}{3}v^3 < \left(4 + \frac{1}{4r^2}\right)v~.
\end{equation}
Hence there exists $r_0$ independent of $A$ so long as $A < 1$, such that 
$$\Delta v>2\sinh 2 v$$
for every $r\geq r_0$.

To conclude, choose $1>A>0$ such that 
$v(r_0) = Ar_0^{-1/2}e^{-2r_0}<\tilde{h}_0(\mathsf w)$ for all $|\mathsf w| = r_0$. Then by the maximum principle for this choice of $A$, we have $\tilde{h}_0 \geq v$ whenever $r\geq r_0$. Indeed, $\tilde{h}_0 - v> 0$ on the circle $\{r=r_0\}$ by the choice of $A$, and $\tilde{h}_0- v\to 0$  as $r\to +\infty$ since each function goes to 0, so if we suppose that the set $\{\tilde{h}_0<v\}\cap \{r\geq r_0\}$ is non-empty, then $\tilde{h}_0 - v$ has to assume a negative minimum value. But at the minimum point, 
$$\Delta \tilde{h}_0=2\sinh 2\tilde{h}_0< 2\sinh 2v<\Delta v~,$$
hence $\Delta (\tilde{h}_0- v)<0$ and this gives a contradiction. Hence we conclude that
$$\tilde{h}_0> v=\frac{A}{\sqrt{r}}e^{-2r}$$
for $r\geq r_0$ and for a suitable choice of $A>0$, as claimed.
\end{proof}

\subsection{Proof of Proposition \ref{prop foliation triangular domain}} 

We begin with several lemmas, which will help us to understand the behavior of the immersion $\sigma_0$ in specific directions. The ultimate goal is to show that $\sigma_0$ is a proper embedding. First, we argue that since $\sigma_0$ is convex, it is at the very least a subset of an entire \emph{achronal} surface.

\begin{lemma} \label{lem achronal extension}  Let $\sigma: S \to \R^{2,1}$ be a $C^2$ immersion with everywhere positive definite second fundamental form. Let $G: S \to \Hyp^2$ be the Gauss map of $\sigma$, and suppose that $G$ is injective with convex image. Then $\sigma$ is an embedding and moreover there exists a convex achronal entire surface $\Sigma$ such that the normal of each support plane of $\Sigma$ is in $\overline{G(S)}$, and $\sigma(S)$ is the subset of $\Sigma$ whose support planes have normal contained in $G(S)$.
\end{lemma}

\begin{proof}
The strategy of the proof is to construct the support function of $\Sigma$ from the immersion $\sigma$. Let $\pi: \Hyp^2 \to \D $ be the radial projection to the disk at height 1. Define the function $u_\sigma: \mathrm{Im}(\pi \circ G) \to \R$ by 
$$ u_\sigma(\pi \circ G(\pmb p)) = \langle \pmb p, (\pi \circ G(\pmb p),1)\rangle$$

%Define the function $u^1_\sigma: \Hyp^2 \supset \mathrm{Im}(G) \to \R$ by $u^1_\sigma(p) = \langle p, \sigma(G^{-1}(p)) \rangle$. Now extend $u^1_\sigma$ to a 1-homogeneous function $\hat{u}_\sigma$ on the cone on $\mathrm{Im}(G)$ in $\R^{2,1}$ by setting $\hat{u}_\sigma(p) = u^1_\sigma(p/\sqrt{-\langle p, p \rangle})$, and let $u_\sigma$ be the restriction of $\hat{u}_\sigma$ to the disk $\D \times 1$. Let $\pi: \Hyp^2 \to \D \times 1$ be the radial projection.

The fact that the second fundamental form of $\sigma$ is positive definite implies that the function $u_\sigma$ is convex, by straightforward calculation of the Hessian of $u_\sigma$. Since the domain $\pi \circ G(S)$ of $u_\sigma$ is also convex, its convex hull $\mathrm{conv}(u_\sigma)$ is equal to $u_\sigma$ on $\pi \circ G(S)$ and is equal to $+\infty$ on $\overline{\pi \circ G(S)}^c$. By Proposition \ref{prop: convex duality}, $\mathrm{conv}(u_\sigma)$ is dual to an entire achronal surface $\Sigma$. Moreover the Legendre transform gives a homeomorphism from the image of $\pi \circ G$ to the subset of $\Sigma$ consisting of points whose support plane has normal contained in the image of $G$. Composing the Legendre transform with $\pi \circ G$, we obtain an embedding $\sigma': S \to \R^{2,1}$. By construction, the Gauss map of $\sigma'$ is equal to $G$ and the support function of $\sigma'$ is equal to $u_\sigma$ (compare Equation \eqref{eq support function of convex surface}).

Since $\sigma$ is an immersion with positive definite second fundamental form, it is locally the graph of a convex function. By the local nature of the formula to recover a surface from its support function (Equation \eqref{eq inverse support function}), the immersions $\sigma$ and $\sigma'$ must agree. Therefore $\sigma$ is an embedding and its image is exactly those points of $\Sigma$ whose support plane has normal in $G(S)$.
\end{proof}

We will take advantage of the global symmetries of the CGC surface $\sigma_0(\C)$ and its achronal extension $\Sigma$ as in Lemma \ref{lem achronal extension}.

\begin{lemma} \label{lemma symmetry Hsurface}
There exists a dihedral group $\Gamma<\isom(\R^{2,1})$ of order 6 which leaves the surface $\Sigma$ invariant. Moreover, $\Gamma$ is generated by a linear elliptic isometry in $\SO_0(2,1)$ of order 3, and by a reflection in a timelike plane.
\end{lemma}
\begin{proof}
By Theorem \ref{fund theorem}, the surface $\sigma_0(\C)$ is determined up to a global isometry by the embedding data, namely the first fundamental form $\I$ of Equation \eqref{eq first fund form w} and the shape operator $B$ of Equation \eqref{eq shape op w}.

Since the solution $h_0$ to Bochner equation is rotationally invariant, and the holomorphic quadratic differentials $\pm \mathsf zd\mathsf z^2$ have an order 3 rotational symmetry, the embedding data $(\I,B)$ have a dihedral group of (intrinsic) isometries generated by the rotation $\alpha:\mathsf z\mapsto \omega \mathsf z$ (where $\omega$ is a cubic root of the identity) and by the conjugation $\beta:\mathsf z\mapsto\overline {\mathsf z}$. By the uniqueness part of Theorem \ref{fund theorem}, the embeddings $\sigma_0$ and $\sigma_0\circ \alpha$ differ by a global isometry $A\in\isom(\R^{2,1})$. Such $A$ must necessarily preserve orientation and time-orientation, and fix the point $\sigma_0(0)$ and the normal vector $N(\sigma_0(0))$. Hence $A\in \SO_0(2,1)$ is a rotation, and has order three by a similar argument. 

Analogously, one shows that $\sigma_0$ and $\sigma_0\circ \beta$ differ by a time-orientation preserving and orientation-reversing isometry $B$ which fixes the geodesic $\sigma_0(\{\mathrm{Im}(\mathsf z)=0\})$ of $\sigma_0(\C)$ pointwise. We have thus obtained a representation of the dihedral group $\langle \alpha,\beta\rangle$ of order 6 in $\isom(\R^{2,1})$, whose image leaves $\sigma_0(\C)$ invariant.

Since Lemma \ref{lem achronal extension} defines $\Sigma$ canonically in terms of the embedding $\sigma_0$, it is also invariant under the same group of isometries.
\end{proof}

Finally, we show that the surface $\sigma_0(\C)$ looks like a properly embedded surface along its planes of symmetry. First, the following general lemma characterizes properly embedded spacelike curves contained in a timelike plane in terms of their speed and curvature.

\begin{lemma}\label{lem integral of curvature} Let $\gamma: [0, + \infty) \to \R^{1,1}$ be a spacelike curve with curvature $\kappa:[0,+\infty) \to \R_+$ and speed $\nu:[0,+\infty) \to \R_+$. Assume that $$\int_0^{\infty}\exp\left(\int_0^r \kappa(s)\nu(s)ds\right) \nu(r)dr = + \infty~.$$ Then $\gamma$ is proper.
\end{lemma}

\begin{proof} Let $T: [0, + \infty) \to \R^{1,1}$ be the unit tangent vector field along $\gamma$ and $N$ be the normal vector. Denote by $t = t(r)$ the arclength, so that $$\frac{dt}{dr} = \nu(r)~.$$ We have that $\langle T,T \rangle = 1$, $\langle N,N\rangle = -1$, and $\langle T,N \rangle = 0$. Moreover,
$$
\begin{cases}
\frac{dT}{dt} &= \kappa N \\
\frac{dN}{dt} &= \kappa T \\
\end{cases}~.
$$
Observe that the lightlike directions of $T+N$ and $T-N$ are fixed. Let $\xi_-$ be a future-pointing lightlike vector parallel to $T-N$, and consider the function $\rho = \langle \gamma, \xi_- \rangle$. As $\frac{d\gamma}{dt} = T$, we have
$$
\begin{cases}
\frac{d\rho}{dt} &= \langle T, \xi_-\rangle \\
\frac{d^2\rho}{dt} &= \langle \kappa N, \xi_- \rangle = \kappa \langle T, \xi_- \rangle = \kappa \frac{d\rho}{dt}
\end{cases}
$$
where we have used that $\langle T-N, \xi_- \rangle = 0$. So $\frac{d\rho}{dt}(t) = \frac{d\rho}{dt}(0) \exp\left(\int_0^t \kappa(r(\tau)) d\tau \right)$ or in terms of the parameter $r$, $$\frac{d\rho}{dt}(t(r)) = C \exp\left(\int_0^r \kappa(s)\nu(s)  ds \right)~.$$ So
$$\frac{d\rho}{dr}(r) = \frac{d\rho}{dt}(t(r))\cdot \nu(r) =  C \exp\left(\int_0^r \kappa(s) \nu(s)ds\right) \nu(r)$$
and by the assumption,
$$\int_0^{+\infty} \frac{d\rho}{dr} dr = + \infty ~.$$
Therefore $\langle \gamma(r), \xi_-\rangle \to + \infty$ and this implies that $\gamma$ is proper.
\end{proof}

We now apply this general result to the intersection of $\sigma_0(\C)$ with its planes of symmetry. One of these three identical curves is the fixed points of the conjugation $\beta: \mathsf z \mapsto \overline{\mathsf{z}}$.

\begin{cor} \label{cor proper axis} The restriction of $\sigma_0(\mathsf z)$ to the real axis in the $\mathsf z$ coordinate is a proper spacelike curve.
\end{cor}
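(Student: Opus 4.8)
The plan is to apply Lemma~\ref{lem integral of curvature} to the curve $\sigma_0$ restricted to the real $\mathsf z$-axis. The first step is to observe, via Lemma~\ref{lemma symmetry Hsurface}, that this curve is fixed pointwise by the isometry $B_0\in\isom(\R^{2,1})$ corresponding to the conjugation $\beta:\mathsf z\mapsto\overline{\mathsf z}$; hence it lies in the timelike plane $P=\mathrm{Fix}(B_0)\cong\R^{1,1}$, and since $\sigma_0$ is a spacelike immersion it is a spacelike regular curve. Properness of a curve $\R\to\R^{2,1}$ can be checked on each of its two rays separately, so it suffices to treat $\gamma_+=\sigma_0|_{[0,+\infty)}$ and $\gamma_-=\sigma_0|_{(-\infty,0]}$ in turn.

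Next I would compute the speed $\nu$ and curvature $\kappa$ entering Lemma~\ref{lem integral of curvature}. In the chart $\mathsf w=(2/3)\mathsf z^{3/2}=u+iv$, the positive real $\mathsf z$-axis corresponds to $\{v=0,\ u\geq 0\}$ and the negative one (in another branch) to $\{u=0,\ v\leq 0\}$. Along $\{v=0\}$, Equations~\eqref{eq first fund form w} and~\eqref{eq shape op w} give $\I=(2\sinh\tilde h_0)^2\,du^2$ and make $\partial_u$ a principal direction with principal curvature $\coth\tilde h_0$. Because the curve is the fixed locus of an isometric involution of $(\sigma_0(\C),\I)$ it is a geodesic of the surface, and because $B_0$ preserves the normal line of $\sigma_0(\C)$ along it, the future unit normal $\pmb n$ (being timelike) lies in $P$; hence the curvature vector $D_TT=\II(T,T)\pmb n=\coth(\tilde h_0)\,\pmb n$ of the curve in $\R^{2,1}$ lies in $P$. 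So, parametrizing by $r=|\mathsf w|$, the ray $\gamma_+$ has $\nu(r)=2\sinh\tilde h_0(r)$ and $\kappa(r)=\coth\tilde h_0(r)$; the identical argument along $\{u=0\}$ gives $\nu(r)=2\cosh\tilde h_0(r)$, $\kappa(r)=\tanh\tilde h_0(r)$ for $\gamma_-$.

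Finally I would verify the divergence hypothesis of Lemma~\ref{lem integral of curvature}. For $\gamma_-$ this is immediate: $\kappa\nu\geq 0$ and $\nu\geq 2$, so the integrand is bounded below by $2$ and its integral over $[0,+\infty)$ diverges. For $\gamma_+$ I would use $\kappa(s)\nu(s)=2\cosh\tilde h_0(s)\geq 2$, so $\exp\!\big(\int_0^r\kappa\nu\,ds\big)\geq e^{2r}$, together with the lower bound of Lemma~\ref{lemma a priori2 bochner}, $\nu(r)=2\sinh\tilde h_0(r)\geq 2\tilde h_0(r)\geq \tfrac{2A}{\sqrt r}e^{-2r}$ for $r\geq r_0$; multiplying, the integrand is bounded below by a constant times $r^{-1/2}$ for large $r$, whose integral diverges. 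After reparametrizing the ray to start at $r_0$ (harmless, as the discarded piece is compact) Lemma~\ref{lem integral of curvature} yields that each ray is proper, hence so is the whole curve.

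The main obstacle is precisely the ray $\gamma_+$: along the $u$-direction the induced metric degenerates, $\I\sim 4\sinh^2\tilde h_0\,du^2\to 0$, so this curve has \emph{finite} induced length and it is genuinely unclear a priori whether it escapes to infinity or converges to a point. The point of the integral criterion of Lemma~\ref{lem integral of curvature} is that the exponential blow-up of the curvature (through $\kappa\nu=2\cosh\tilde h_0$) exactly cancels the exponential decay $e^{-2r}$ of the speed, leaving the polynomial factor $r^{-1/2}$ from Lemma~\ref{lemma a priori2 bochner} to force divergence. This is why the exponent $-2r$ in that lemma had to be sharp, and matching the two exponential rates is the delicate step.
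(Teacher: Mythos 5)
Your proof is correct and follows essentially the same route as the paper: the curve lies in the fixed timelike plane of the reflection symmetry, and the hard end (positive real axis, with $\nu=2\sinh\tilde h_0$, $\kappa=\coth\tilde h_0$) is handled by Lemma \ref{lem integral of curvature} using $\kappa\nu=2\cosh\tilde h_0\geq 2$ together with the lower bound of Lemma \ref{lemma a priori2 bochner}, exactly as in the text. The only (harmless) deviation is at the other end, where the paper observes that the induced metric dominates $4\,dv^2$, so that ray is complete and hence proper inside a timelike plane, whereas you run the same integral criterion with $\nu\geq 2$ --- both arguments are valid.
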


\begin{proof} Since this curve is fixed by the conjugation symmetry $\beta$, it must be contained in the timelike plane fixed by the corresponding reflection of $\R^{2,1}$. Identify this timelike plane with $\R^{1,1}$. We check properness at each end in turn. For each case, we choose a branch $\mathsf w = \frac{2}{3} \mathsf z^{3/2}$ as above. The negative real axis in the $\mathsf z$ chart corresponds to the imaginary axis $\mathsf w = 0 + iv$ in the $\mathsf w$ chart. By Equation (\ref{eq first fund form w}) we see that the metric on this ray is $(2 \cosh(\tilde{h}_0))^2dv^2$ which always larger than $4dv^2$, and so this end is complete. Since it is contained in a timelike plane, completeness implies that it is properly embedded.

The harder case is the positive real axis, which corresponds to the real axis $\mathsf w = u + 0i $ in the $\mathsf w$ chart. Here the induced metric is $(2 \sinh(\tilde{h}_0))^2du^2$, which tends to 0 as $u \to \infty$. With respect to the $u$ coordinate, Equations (\ref{eq first fund form w}) and (\ref{eq shape op w}) show that the velocity is $\nu = 2 \sinh(\tilde{h}_0(u))$ and the curvature is $\kappa = \coth(\tilde{h}_0(u))$. Therefore
$$\exp\left(\int_0^u \kappa(s)\nu(s)ds\right) = \exp\left(\int_0^u 2 \cosh(\tilde{h}_0(s))ds\right) \geq \exp(2u)$$ 
and using $\sinh \tilde{h}_0 \geq \tilde{h}_0$ together with the estimate of Lemma \ref{lemma a priori2 bochner},
$$ \int_{u_0}^\infty \exp\left(\int_0^u \kappa(s)\nu(s)ds\right) \nu(u)du \geq \int_{u_0}^\infty \exp(2u) \left(\frac{A}{\sqrt{u}}\exp(-2u)\right) \to + \infty~.$$
Hence we conclude from Lemma \ref{lem integral of curvature} that this end of the curve is also proper.
\end{proof}

Now we conclude the proof of the main result of this section.

\begin{proof}[Proof of Proposition \ref{prop foliation triangular domain}]

Let $\sigma_0$ be the CGC-1 immersion constructed in the previous section, which is actually an embedding by Lemma \ref{lem achronal extension}, and let $\Sigma$ be the convex achronal entire surface constructed from $\sigma_0$ via Lemma \ref{lem achronal extension}. We will show that the image $\sigma_0(\C)\subseteq\Sigma$ coincides with $\Sigma$, and this will show that the image of $\sigma_0$ is entire.

Since the image of $\sigma_0$  is open and nonempty in $\Sigma$, it suffices to show that its boundary is empty. For this purpose, let us assume there exists a sequence $\mathsf z_n \in \C$ such that $\sigma_0(\mathsf z_n) \to \pmb p \in \partial \sigma_0(\C)$. As the image of the Gauss map $f_0':\C\to\Hyp^2$ of $\sigma_0$ is an ideal triangle, we can extract a subsequence (still denoted $\mathsf z_n$) such that $f_0'(\mathsf z_n)$ converge either to an ideal vertex or to a point on an edge of the ideal triangle. We will rule out both possibilities and thus get a contradiction. 

Let us first suppose that $f_0'(\mathsf z_n)$ converge to an ideal vertex. Since support planes converge to support planes, it follows that $\Sigma$ admits a lightlike support plane
$P$ (as a limit of the spacelike support planes to $\sigma_0(\C)$ at $\sigma_0(\mathsf z_n)$), which must be parallel to one of the three null directions in the closure of the Gauss map image. Now using Lemma \ref{lemma symmetry Hsurface}, let $Q$ be the timelike plane of symmetry of $\Sigma$ such that reflection in $Q$ leaves $P$ invariant. 

By Corollary \ref{cor proper axis}, the intersection of $\sigma_0(\C)$ with $Q$ is a proper spacelike curve. In particular, $\Sigma$ can contain no point in $Q \cap P$. In particular, the point $\pmb p$ is not in $Q$. Now let $\pmb p' \in \Sigma$ be the reflection of $\pmb p$ across $Q$. Since the reflection leaves $P$ invariant, $P$ must also be the support plane of $\Sigma$ at $\pmb p'$. Hence the midpoint of $\pmb p$ and $\pmb p'$ lies on the plane $Q$ and still has null support plane $P$, which contradicts the fact that the intersection of $\sigma_0(\C)$ with $Q$ is a proper spacelike curve

We are thus left with the case that $f_0'(\mathsf z_n)$ converge to an a point of an edge of the ideal triangle which is the image of the Gauss map. Let us now consider the new immersion 
$\sigma_1:\C\to\R^{2,1}$ defined by:
\begin{equation} \label{eq normal immersion}
\sigma_1(\mathsf z)=\sigma_0(\mathsf z)+f_0'(\mathsf z)~,
\end{equation}
where we are considering $f_0'$ as a map valued in $\Hyp^2\subset\R^{2,1}$. (Since $f_0'=\pmb n$ is the Gauss map of $\sigma_0$, the immersion $\sigma_1$ is the normal evolution of $\sigma_0$ at time 1.) By a direct computation, one obtains
$$d\sigma_1(v)=d\sigma_0(v)+D_{v}\pmb  n=d\sigma_0((\mathbbm 1+B)(v))~,$$
where $B$ is the shape operator of $\sigma_0$, and therefore the first fundamental form of $\sigma_1$ equals:
$$\langle d\sigma_1(v),d\sigma_1(w)\rangle=\I(\mathbbm 1+B) v,(\mathbbm 1+B) w)~.$$

By a direct computation from Equations \eqref{eq first fund form w}, \eqref{eq third fund form w} and \eqref{eq shape op w}, this metric has the form 
$4e^{2\tilde{h}_0}|d\mathsf w|^2$
which is a complete metric on $\C$ as $\tilde h_0>0$ (see Lemma \ref{lemma a priori1 bochner}). Therefore $\sigma_1(\C)$ is a proper immersion (Remark \ref{rmk complete implies entire}).
Now, since $\sigma_0(\mathsf z_n)$ is converging to $\pmb p$, and $f_0'(\mathsf z_n)$ is converging to some interior point of $\Hyp^2$, the sequence $\sigma_1(\mathsf z_n)$ is converging in $\R^{2,1}$ by Equation \eqref{eq normal immersion}.
But the sequence $\mathsf z_n$ escapes from compact sets of $\C$, hence $\sigma_1(\mathsf z_n)$ is diverging in $\R^{2,1}$, and this gives a contradiction.

%Observe that the sum of the first and third fundamental forms of $\sigma_0$ (Equations \eqref{eq first fund form w} and \eqref{eq third fund form w}) is 
%$$\I + \III =f_0^*h_{\Hyp^2}+(f_0')^*h_{\Hyp^2} =4\cosh(2\tilde{h}_0)|d\mathsf w|^2$$
%which is a complete metric on $\C$. Therefore, the sequence $\pmb p_n$ diverges with respect to the metric given by $I + III$. However, the sequence $\pmb p_n$ converges in $\R^{2,1}$, so it is Cauchy with respect to $I$. We conclude that it must diverge with respect to the metric $III$.

%By the definition of $\Sigma$, it is equivalent to show that $\Sigma$ can have no support plane whose slope lies in the boundary of the image of the Gauss map of $\sigma_0$. Suppose to the contrary that some point $\pmb p \in \Sigma$ admits a support plane $P$ with such a slope. By Proposition \ref{prop: convex duality} the support plane 

In conclusion, following Lemma \ref{lem achronal extension}, $\Sigma = \sigma_0(\C)$, so in particular $\sigma_0(\C)$ is entire. Then by Corollary \ref{cor domains dependence} its domain of dependence must be the intersection of the futures of a set of null planes. Since the image of the Gauss map is an ideal triangle, Theorem \ref{thm gauss image} implies that this set must contain exactly three null planes. Since all triangular regular domains are equivalent up to isometry of $\R^{2,1}$, this concludes the proof.
 \qedhere
\end{proof}

\begin{remark} %To do: edit this remark, make sure we don't give too much away!
It actually turns out, as mentioned in Remark \ref{rmk CMC}, that $\sigma_1$ is a constant mean curvature embedding. The completeness of the first fundamental form of $\sigma_1$ is therefore also a consequence of more general results. However, the existence of such constant mean curvature surface only allows us to prove the second part of Proposition \ref{prop foliation triangular domain}, namely, to show that there is no converging sequence $\sigma_0(\mathsf z_n)$ for which the Gauss map converge to an interior point of $\Hyp^2$. Tools from CMC surfaces are not helpful to tackle the first point, that is, excluding the existence of lightlike support planes for $\sigma_0(\C)$.

The reason why such strategy fails is that, starting from an entire CMC surface, one can follow the normal flow in the past to obtain a CGC immersion, but it is hard to prove that it is entire (in fact, it  will  not be complete in general). Hence we do not adopt the language of CMC surfaces here, and the technical estimates on the holomorphic energy, leading to Corollary \ref{cor proper axis}, are the essential ingredient for our proof.
\end{remark}

In order to apply the entire CGC-$K$ surfaces provided by Proposition \ref{prop foliation triangular domain} as barriers for the general case, we will need to translate Proposition \ref{prop foliation triangular domain} in terms of Monge-Amp\`{e}re equations. In fact, fix three distinct points $\xi_1,\xi_2,\xi_3$ and three values $v_1,v_2,v_3$. 

Let $\Sigma^K$ be the surface provided by Proposition \ref{prop foliation triangular domain} with $\ph(\xi_i)=v_i$. Then the support function $u_{\Sigma^K}$ satisfies the Monge-Amp\`{e}re equation \eqref{eq monge constant curvature}. Moreover, by Theorem \ref{prop constant curvature convex hull2}, $u_{\Sigma^K}$ is necessarily affine on each side of the triangle $T$ having vertices $\xi_1,\xi_2,\xi_3$. Finally, the Legendre transform of $u_{\Sigma^K}$ gives the surface $\Sigma^K$, since $u_{\Sigma^K}$ is convex and lower semicontinuous. 

Hence we can reformulate our result in terms of Monge-Amp\`{e}re equations:

\begin{cor} \label{cor solution on triangle}
Given three distinct points $\xi_1,\xi_2,\xi_3\in\partial\D$, let $T$ be the triangle in $\overline \D$ with vertices $\xi_1,\xi_2,\xi_3$. For any affine function $l:\D\to\R$, there exists a solution $u$ to the problem
$$\begin{cases} 
\det D^2 u(\mathsf z)=\frac{1}{K}(1-|\mathsf z|^2)^{-2} & \text{for every }\mathsf z\in\mathrm{int}(T) \\
u|_{\partial T}=l|_{\partial T}~,
\end{cases}$$
such that the graph of the Legendre transformation of $u$ is an entire surface.
\end{cor}

\section{Existence and uniqueness for the Minkowski problem} \label{sec existence and uniqueness}

In this section we will prove the main result (Theorem \ref{thm mink problem}) concerning the Minkowski problem, namely the existence and uniqueness of entire surfaces of prescribed Gaussian curvature in any regular domain $\mathcal D_\ph$, where $\ph$ is finite on at least three points of $\partial\D$. 

\subsection{Statement of the Monge-Amp\`ere problem}
We start by constructing solutions to the Monge-Amp\`ere equation \eqref{eq monge ampere curvature}. Recall that the convex envelope of a function $\ph:\partial\D\to\R\cup\{+\infty\}$, introduced in Definition \ref{defi convex envelope}, is:
$$\mathrm{conv}(\ph)(\mathsf z)=\sup\{f(\mathsf z)\,|\,f:\overline \D\to\R\text{ is affine, }f|_{\partial\D}\leq\ph\}~.$$
Moreover, we denote by $\Omega_\ph\subseteq \D$ the interior of the convex hull of $\{\xi\in\partial\D\,|\,\ph(\xi)<+\infty\}$. 
% Let us remark (as in Subsection \ref{subsec image gauss}) that given two points $\xi_1,\xi_2\in \partial\D$ and two values $\ph_1,\ph_2$, the function
% $$\mathsf z\mapsto \frac{\ph_1|\mathsf z-\xi_1|+\ph_2|\mathsf z-\xi_2|}{|\xi_1-\xi_2|}$$
% is the unique affine function on the line connecting $\xi_1$ and $\xi_2$ such that
% $$\lim_{\mathsf z\to \xi_i}=\ph_i~,$$
% for $i=1,2$.

\begin{theorem} \label{thm existence monge ampere}
Let $\ph:\partial\D\to\R\cup\{+\infty\}$ be any lower semi-continuous function which is finite on at least three distinct points. Let  $\psi\in C^\infty(\Omega_\ph)$ such that $a<\psi(\mathsf z)<b$ for some $a,b>0$ and for every $\mathsf z\in \Omega_\ph$. Then there exists a unique closed convex function $u: \overline{\D} \to \R \cup \{+\infty\}$ which is a solution to
\begin{equation} \label{eq dir supp}
\begin{cases}
\det D^2 u(\mathsf z)=\frac{1}{\psi(\mathsf z)}(1-|\mathsf z|^2)^{-2} \quad &\textrm{for $\mathsf z \in \Omega_\ph$} \\
u(\mathsf z) = \mathrm{conv}(\ph)(\mathsf z) \quad &\textrm{for $\mathsf z \in \overline{\D} \setminus \Omega_\ph$}~.
\end{cases}
\end{equation}
Moreover, $u$ is smooth on $\Omega_\ph$ and gradient-surjective.

% Then there exists a solution $u:\Omega_\ph\to\R$ of the equation:
% $$\det D^2 u(\mathsf z)=\frac{1}{\psi(\mathsf z)}(1-|\mathsf z|^2)^{-2}$$
% such that, if we denote by $\underline u:\overline{\mathcal C_\ph}\to\R\cup\{+\infty\}$ the lower semicontinuous regularization of $u$, then
% \begin{itemize}
% \item For every $\xi\in\overline{\mathcal C_\ph}\cap \partial\D$, $\underline u(\xi)=\ph(\xi);$
% \item For every connected component $c$ of $\partial\mathcal C_\ph\cap\D$, with endpoints $\xi_1,\xi_2\in \partial\D$,
% $$\underline u(\mathsf z)= \frac{\ph(\xi_1)|\mathsf z-\xi_1|+\ph(\xi_2)|\mathsf z-\xi_2|}{|\xi_1-\xi_2|}$$
% for every $\mathsf z\in c$.
% \end{itemize}
%Moreover, when $\psi$ is a constant function, the solution $u$ is unique.
\end{theorem}

% We underline again that, if $c$ is a connected component of $\partial\mathcal C(\ph)$ with endpoints $\xi_1,\xi_2$, and $\ph(\xi_1)=+\infty$ and/or $\ph(\xi_2)=+\infty$, the second condition of Theorem \ref{thm existence monge ampere} has to be interpreted as 
% $$u(\mathsf z)=+\infty$$
% for every $\mathsf z\in c$.

Recall by Proposition \ref{prop reg domain conv} that $\mathrm{conv}(\ph)$ is equal to $\ph$ on $\partial \D$ and on any chord $[\xi_1,\xi_2]$ of $\partial \Omega_\ph$ it is the unique affine function interpolating $\ph(\xi_1)$ and $\ph(\xi_2)$. 
% By Theorem \ref{solution strictly convex dimension 2}, a solution $u$ as in Theorem \ref{thm existence monge ampere} is strictly convex in $\Omega_\ph$. From the convexity of $u$, it follows that, for every chord $c$ of $\partial\mathcal C(\ph)$ whose endpoints $\xi_1$, $\xi_2$ satisfy $\ph(\xi_1),\ph(\xi_2)<+\infty$, $u$ necessarily extends with continuity to $c$. We will need to show that such continuous extension  of  $u$  (which coincides with $\underline u$) is affine on $c$.

\subsection{Proof of existence}
We will split the proof of Theorem \ref{thm existence monge ampere} in two parts, by proving first the existence and then the uniqueness.

\begin{proof}[Proof of the existence part of Theorem \ref{thm existence monge ampere}]

Let us split the proof into several steps.
\begin{steps}
\item To simplify notation, we will write $h = \mathrm{conv}(\ph)$. Let us first construct an approximating sequence $u_n$.
Let $\Omega_n$ be an exhaustion of $\Omega_\ph$ by  strictly convex domains with smooth boundary, satisfying $\Omega_n\subset\!\subset \Omega_\ph$ and $\Omega_n\subset\!\subset\Omega_{n+1}$ for every $n$.
By the classical theory of Monge-Amp\`{e}re equations (Theorem \ref{thm existence classical monge ampere}), there exists a solution $u_n:\Omega_n\to\R$ of the problem 
$$\begin{cases}
\det D^2 u_n(\mathsf z)=\frac{1}{\psi(\mathsf z)}(1-|\mathsf z|^2)^{-2} & \text{for every }\mathsf z\in\Omega_n \\
u_n|_{\partial\Omega_n}=h|_{\partial \Omega_n} & 
\end{cases}
$$
which is continuous in $\overline \Omega_n$.
By Theorem \ref{solution strictly convex dimension 2}, $u_n$ is strictly convex and therefore smooth by Theorem \ref{solution smooth}.
\item %Might want to model this on the Proposition following the generalized comparison principle.
We now need to prove some uniform \emph{a priori} estimates on the $u_n$. We claim that:
\begin{equation} \label{eq estimate monge ampere}
h(\mathsf z)-\frac{1}{\sqrt a}\sqrt{1-|\mathsf z|^2}\leq u_n(\mathsf z)\leq h(\mathsf z)
\end{equation}
for every $\mathsf z\in\Omega_n$. Indeed, the inequality $u_n\leq h$ follows from the fact that $h$ is the convex envelope of $\ph$ and $u_n$ is convex. For the other inequality, for every linear function $l$ on the disk, the comparison principle (Corollary \ref{cor comparison principle}) gives
$$l(\mathsf z) - \frac{1}{\sqrt a}\sqrt{1-|\mathsf z|^2} \leq u_n(\mathsf z)~.$$
Taking the supremum over all linear functions less than or equal to $\ph$ (compare with the proof of Proposition \ref{prop conv comparison}), we conclude $$h(\mathsf z)-\frac{1}{\sqrt a}\sqrt{1-|\mathsf z|^2}\leq u_n(\mathsf z)~.$$

\item We can now produce the solution $u_\infty$ as a limit of the $u_n$. In fact, 
it follows from the previous step that the functions $u_n$ are uniformly bounded on $\Omega_{n_0}$ for $n\geq n_0$. Moreover, since the $u_n$ are convex, by a classical argument they are equicontinuous on $\Omega_{n_0}$ for $n\geq n_0+1$, where the coefficient of equicontinuity depends on the uniform bound on $\Omega_{n_0+1}$ and on the distance between $\Omega_{n_0}$ and $\Omega_{n_0+1}$. 

Thus by the Ascoli-Arzel\`{a} theorem and a standard diagonal argument, we can extract a subsequence which converges uniformly on compact sets of $\Omega_\ph$ to a function
$u_\infty:\Omega_\ph\to\R$. By Lemma \ref{convergence of solutions}, $u_\infty$ satisfies
$$\det D^2 u_\infty(\mathsf z)=\frac{1}{\psi(\mathsf z)}(1-|\mathsf z|^2)^{-2}~.$$
Hence we have again that $u_\infty$ is strictly convex (Theorem \ref{solution strictly convex dimension 2}) and therefore smooth (Theorem \ref{solution smooth}).  

\item  Now define the closed convex function $u$ by $u = \mathrm{conv}(u_\infty)$. As $u_\infty$ is already convex, we have that $u$ coincides with $u_\infty$ on $\Omega_\ph$. It remains to show that $u = h$ on $\overline{\D} \setminus \Omega_\ph$. Both are infinite away from $\overline{\Omega}_\ph$, so we restrict attention to $\partial \Omega_\ph$. Let us first show that $u = h$ on $\overline{\Omega}_\ph \cap \partial\D$. Using \eqref{eq estimate monge ampere}, one obtains 
\begin{equation} \label{eq estimate monge ampere limit function}
h(\mathsf z)-\frac{1}{\sqrt a}\sqrt{1-|\mathsf z|^2}\leq u_\infty(\mathsf z)\leq h(\mathsf z)~.
\end{equation}

Taking the convex hull preserves these inequalities. Since $\frac{1}{\sqrt a}\sqrt{1-|\mathsf z|^2}$ vanishes on $\partial \D$, we conclude immediately that $u = h = \ph$ on $\partial \D$.

\item We are left with showing $u = h$ on $\partial \Omega_\ph \cap \D$. Let $c=[\xi_1,\xi_2]$ be any chord in $\partial\Omega_\ph$. Let $D_c$ be the half-plane in $\D$ bounded by $[\xi_1,\xi_2]$ which intersects $\Omega_\ph$ and let $u_c^a$ be the unique solution of the problem
$$\begin{cases}
\det D^2 u_c^a(\mathsf z)=\frac{1}{\sqrt a}(1-|\mathsf z|^2)^{-2} & \text{for every }\mathsf z\in D_c \\
u_c^a|_{\partial D_c}=0
\end{cases}~.
$$
If $c$ is the geodesic $\{x=0\}$ and $D_c=\D_+=\{x\geq 0\}$, then such solution was provided explicitly in Equation (\ref{eq chord barrier}), as it is the support function of a surface of revolution $\Sigma_0^a$, and by inspection it is continuous in $\overline{\D}_+$. In general, $u_c^a$ is the support function of a surface obtained by applying a linear isometry in $\SO_0(2,1)$ to $\Sigma_0^a$. Therefore also the solution $u_c^a$ is continuous on $\overline{D}_c$. 

By an argument similar to above (Equation \eqref{eq estimate monge ampere}) we get 
\begin{equation} \label{eq estimate monge ampere 2}
h(\mathsf z)+u_c^a(\mathsf z)\leq u_n(\mathsf z)\leq h(\mathsf z)~.
\end{equation}
By passing to the limit, we thus obtain
$$h(\mathsf z)+u_c^a(\mathsf z)\leq u_\infty(\mathsf z)\leq h(\mathsf z)~.$$
Taking convex envelopes as above and using that $u^a_c$ vanishes on $c$, we conclude that $u = h$ on $c$.\qedhere
\end{steps}
\end{proof}

\subsection{Proof of uniqueness}
Let us now complete the proof by showing the uniqueness of the solution $u$.

\begin{proof}[Proof of the uniqueness part of Theorem \ref{thm existence monge ampere}] 
We split the proof into two steps.
\begin{steps}
\item
Let $u$ now be \emph{any} solution of (\ref{eq dir supp}). As above, set $h = \mathrm{conv}(\ph)$. First we show that $u$ must satisfy the inequalities:
\begin{equation} \label{eq inequality for uniqueness 1}
h(\mathsf z)-\frac{1}{\sqrt a}\sqrt{1-|\mathsf z|^2}\leq u(\mathsf z)\leq h(\mathsf z)~.
\end{equation}
and, for every chord $c$ in $\partial\Omega_\ph$,
\begin{equation} \label{eq inequality for uniqueness 2}
h(\mathsf z)+u_c^a(\mathsf z)\leq u(\mathsf z)~,
\end{equation}
where $u_c^a$ is the solution, defined on the half-plane $D_c$ bounded by $c$, of:
$$\begin{cases}
\det D^2 u_c^a(\mathsf z)=\frac{1}{a}(1-|\mathsf z|^2)^{-2} & \text{for every }\mathsf z\in D_c \\
u_c^a|_{\partial D_c}=0
\end{cases}~.
$$

The inequality $u\leq h$ is obvious. The other inequality follows from an adaptation of the argument in \cite[Proposition 3.9]{Bonsante:2015vi}, where more details can be found. First, up to composing with an isometry, suppose $0\in\D$ is in $\Omega_\ph$. Fix $r\in(0,1)$ and let $u_r:\overline{\Omega}_\ph\to\R$ be defined by $u_r(\mathsf z)=u(r\mathsf z)$. It is then easy to check that
$$\det D^2 u_r(\mathsf z)\leq \frac{1}{a}(1-|\mathsf z|^2)^{-2}~.$$
Let us now define
$$h_r=\mathrm{conv}(u_r|_{\partial \Omega_\ph})~.$$

Since $u_r$ is continuous up to the boundary of $\Omega_\ph$ and $(u_r)|_{\partial\Omega_\ph}=(h_r)|_{\partial\Omega_\ph}$, again by the comparison principle we get
\begin{equation} \label{eq inequality for ur}
h_r(\mathsf z)+v(\mathsf z)\leq u_r(\mathsf z)~,
\end{equation}
where $v\leq 0$ can be any of the functions $(1/\sqrt a)\sqrt{1-|\mathsf z|^2}$ or $u_c^a$, for every chord $c$. 

%Now, since both $h$ and $\underline u$ are convex and lower semicontinuous on $\mathcal C_\ph$, as already observed before their value on $\partial\mathcal C_\ph$ coincide with the radial limit. Hence
%$$\lim_{r\to 1}h_r(\xi)=\lim_{r\to 1}h(r\xi)=h(\xi)$$
%for $\xi\in\partial\mathcal C_\ph$, and analogously for $u_r$. Hence taking limits in 

It then turns out that 
\begin{equation} \label{eq liminf convex envelopes}
h(\xi)\leq\liminf_{r\to 1} h_r(\xi)
\end{equation}
for every $\xi\in \partial\Omega_\ph$. In order to show this, let $f$ be an affine function on $\D$ such that $f < h$. Since $u$ is lower-semicontinuous, the sublevel set $\{\mathsf z\in\overline\D\,:\,u(\mathsf z)\leq f(\mathsf z)\}$ is compact. Since $f$ is finite everywhere, it is contained in $\Omega_\ph$ and since it is compact it is contained in $r_0\Omega_\ph$ for some $r_0<1$.  

This shows that
$$u_r(\xi)=u(r\xi)>f(r\xi) =: f_r(\xi)$$
for every $\xi\in\partial\Omega_\ph$ and every $r\geq r_0$.
That is, $(f_r)|_{\partial\Omega_\ph}<(u_r)|_{\partial\Omega_\ph}$, which implies $f_r\leq h_r$. Taking the limit as $r\to 1$, this implies
$$f(\mathsf z)\leq \liminf_{r\to 1}h_r(\mathsf z)~.$$
The inequality \eqref{eq liminf convex envelopes} then follows, as $h(\xi)$ is defined as the supremum of $f(\xi)$ over all such affine functions $f$. Finally, taking the limit as $r\to 1$ in \eqref{eq inequality for ur}, we conclude the proof of \eqref{eq inequality for uniqueness 1} and \eqref{eq inequality for uniqueness 2}.

\item
Let $u_1$ and $u_2$ be any two solutions of (\ref{eq dir supp}). Using the inequality \eqref{eq inequality for uniqueness 1}, we obtain
$$-\frac{1}{\sqrt a}\sqrt{1-|\mathsf z|^2}\leq u_1(\mathsf z)-u_2(\mathsf z)\leq \frac{1}{\sqrt a}\sqrt{1-|\mathsf z|^2}~,$$
and similarly from \eqref{eq inequality for uniqueness 2},
$$u_c^a(\mathsf z)\leq u_1(\mathsf z)-u_2(\mathsf z)\leq -u_c^a(\mathsf z)~.$$
As already pointed out, $u_c^a$ is continuous up to $c$, on which it is zero. This implies that $u_1-u_2$ extends continuously to zero on $\partial \Omega_\ph$ and therefore by the comparison principle of Theorem \ref{comparison principle}, we have $\min(u_1-u_2)=0$. By reversing the roles of $u_1$ and $u_2$, we have $\min(u_2-u_1)=0$ and thus $u_1=u_2$.\qedhere

\end{steps} 
\end{proof}

%We are thus left with the case $\xi\in\D$. If $u(\xi)<+\infty$, we know from Proposition \ref{prop constant curvature convex hull} that $u$ and $u'$ coincide with the same affine function on the connected component of $\partial C_\ph$ containing $\xi$, and this shows that $\lim_n(u'-u)(z_n)=u'(\xi)-u(\xi)=0$. We thus need to deal with the case $u(\xi)=+\infty$.

 %Let $\Sigma$ and $\Sigma'$ be the graphs of the Legendre transformations of $u$ and $u'$ respectively. Observe that, since the lower semicontinuous regularizations of $u$ and $u'$ coincide on $\partial\D$, $\Sigma$ and $\Sigma'$ have the same domain of dependence. Now, let $p_n'\in\Sigma'$ be the points such that $\pi\circ G_{\Sigma'}(p_n')=z_n$. By the arguments already used several times in the proof of Proposition \ref{prop constant curvature convex hull}, the surfaces $\Sigma'_n:=\Sigma-p_n$ necessarily converge uniformly on compact sets to a wedge $W$. 
 
 %We claim that also $\Sigma_n:=\Sigma-p_n$ converge uniformly on compact sets to the same wedge $W$. In fact, if $\Sigma_n=graph(f_n)$, $\Sigma_n'=graph(f'_n)$, and $W=graph(g)$, where $f_n,f_n',g:\R^2\to\R$. Since $u'\leq u$, $\Sigma'$ is in the future of $\Sigma$, and therefore $f_n\leq f_n'$. Hence the limit of the $f_n$ is necessarily smaller than $g$. Suppose $f_n(w)\leq g(w)-\epsilon$ for some $w$, for $n\geq n_0$. 

\subsection{Proof of entireness} \label{sec entireness}

In this subsection, we prove that the solution $u$ obtained in Theorem \ref{thm existence monge ampere} is the support function of an entire surface, which is equivalent to the statement that $u$ is gradient surjective. This is the key step to conclude the existence of entire surfaces solving the Minkowski problem, hence in particular the classification of entire CGC-$K$ surfaces in $\R^{2,1}$.

\begin{proof}[Proof of the entireness part of Theorem \ref{thm existence monge ampere}]

The graph of the convex dual of $u$ is an achronal surface $\Sigma$ as in Lemma \ref{lem achronal extension}. As long as no point of $\Sigma$ has a support plane whose slope lies outside $\Omega_\ph$, we can recover all of $\Sigma$ as the graph of the Legendre transform of $u$ on $\Omega_\ph$, and so $\Sigma$ must be the spacelike CGC surface that we are looking for.

Suppose for the sake of contradiction that $\Sigma$ contains a point $\pmb p$ at which it admits a support plane $P$ whose slope lies outside of $\Omega_\ph$. By convex duality, the plane $P$ must be one of the planes in the boundary of $\Omega_\ph$ at which $u$ is finite: 
\begin{equation} \label{eq defi support plane contradiction}
P=\{\pmb x\in\R^{2,1}\,:\,\langle \pmb x,(\mathsf z,1)\rangle=u(\mathsf z)\}
\end{equation}
for some $\xi \in \partial \Omega_\ph$. If $\mathsf z$ is on the boundary of $\D$, set $\xi_1=\mathsf z$ and choose two other points $\xi_2,\xi_3$ such that $\ph(\xi_i)<+\infty$. Note that this is possible by the assumption that $\ph$ is finite at at least 3 points. If $\mathsf z$ lies on a chord $[\xi_1,\xi_2]$ of $\partial \Omega_\ph$, choose one other point $\xi_3$ at which $\ph(\xi_3) < + \infty$. Let us call $T_0$ the ideal triangle with vertices $\xi_1,\xi_2,\xi_3$. 

Let $l$ be the unique affine function with the property that $l(\xi_i) = \ph(\xi_i)$ for $i = 1,2,3$. By Proposition \ref{prop foliation triangular domain}, there exists an entire $K$-surface $\Sigma_0$, for $K=b$, with support function $u_0:\overline{\D}\to\R \cup \{+\infty\}$ satisfying $u_0|_{\partial T_0}=l|_{\partial T_0}$ and $u_0(\mathsf z) = +\infty$ for $\mathsf z \in \overline{\D} \setminus \overline{T_0}$.

Now, the functions $u$ and $u_0$ satisfy:
$$\det D^2 u=\frac{1}{\psi(\mathsf z)}(1-|\mathsf z|^2)^{-2}\leq \frac{1}{b}(1-|\mathsf z|^2)^{-2}= \det D^2 u_0~.$$
Moreover $u(\xi_i)=u_0(\xi_i)$ for $i=1,2,3$. Since $T_0$ is a polygon, the restriction of $u$ to $T_0$ is continuous by \cite[Theorem 2]{MR0230219}. Since $u$ is convex, we have $u|_{\partial T_0}\leq u_0|_{\partial T_0}$. Hence by the comparison principle (Corollary \ref{cor comparison principle}), $u|_{T_0}\leq u_0|_{T_0}$. This shows that $\Sigma$ is contained in the future of $\Sigma_0$. But the surface $\Sigma_0$ is entire and hence completely in the future of the support plane $P$ defined in Equation \eqref{eq defi support plane contradiction}.
This contradicts the assumption $\pmb p \in \Sigma \cap P$.
\end{proof}

\subsection{Conclusion of Minkowski and CGC problem}

We can now apply all the proved results and state the main theorems concerning the Minkowski problem and the CGC problem. In fact, in Theorem \ref{thm existence monge ampere} we construct a solution of the problem 
$$\begin{cases}
\det D^2 u(\mathsf z)=\frac{1}{\psi(\mathsf z)}(1-|\mathsf z|^2)^{-2} \quad &\textrm{for $\mathsf z \in \Omega_\ph$} \\
u(\mathsf z) = \mathrm{conv}(\ph)(\mathsf z) \quad &\textrm{for $\mathsf z \in \overline{\D} \setminus \Omega_\ph$}~.
\end{cases}$$
for every lower semicontinuous function $\ph$ finite on at least three points and every smooth bounded function $\psi$ defined on $\Omega_\ph$. Using this result, we now derive the complete solution to the Minkowski and CGC problems for entire spacelike surfaces.

\begin{reptheorem}{thm mink problem}
Given any regular domain $\mathcal D$ in $\R^{2,1}$ {which is not a wedge} and any smooth function $\psi$ defined on the image of the{ generalized} Gauss map of $\partial\mathcal D$ such that $a<\psi<b$ for some $a,b>0$, there exists a {unique} entire spacelike surface $\Sigma$ in $\mathcal D$ whose domain of dependence is $\mathcal D$ and whose curvature function satisfies:
$$\kappa(\pmb p)=\psi\circ G_\Sigma(\pmb p)~,$$
for every $\pmb p\in\Sigma$, where $G_\Sigma$ is the Gauss map of $\Sigma$.
\end{reptheorem}

\begin{proof}[Proof of Theorem \ref{thm mink problem}]

The existence part follows from Theorem \ref{thm existence monge ampere}. Indeed, by Proposition \ref{prop:bijection reg dom} the regular domain $\mathcal D$ must be equal to $\mathcal D_\ph$ for some closed function $\ph$ which is finite at at least 3 points. Then Theorem \ref{thm existence monge ampere} produces a function $u$ whose Legendre transform has the required properties.

Uniqueness is a straightforward consequence of Theorem \ref{thm existence monge ampere} together with Theorem \ref{prop constant curvature convex hull2}. Indeed, by Theorem \ref{prop constant curvature convex hull1}, for any surface $\Sigma$ satisfying the condition of Theorem \ref{thm mink problem}, the image of its Gauss map projected to the Klein model $\D$ of $\Hyp^2$, must be $\Omega_\ph$, i.e. the interior of the convex hull of $\{\xi\in\partial\D\,:\,\ph(\xi)<+\infty\}$. Moreover, the two bullet points of Theorem \ref{prop constant curvature convex hull2} imply that its support function $u_\Sigma$ agrees with $\mathrm{conv}(\ph)$ on $\overline{\D}\setminus \Omega_\ph$. Therefore, $u_\Sigma$ must be a solution of the problem \eqref{eq dir supp} of Theorem \ref{thm existence monge ampere}, and by the uniqueness part of Theorem \ref{thm existence monge ampere}, we conclude that $\Sigma$ is unique.
\end{proof}

As a particular case, we therefore obtain the solution to the CGC problem in regular domains different from a wedge:

\begin{reptheorem}{thm-CGC-problem}
Fix $K>0$. Given any regular domain $\mathcal D\subset \R^{2,1}$ which is not a wedge, there exists a unique entire CGC $K$-surface whose domain of dependence is $\mathcal D$.
\end{reptheorem}

We conclude with the following classification result for entire CGC-$K$ surfaces.

\begin{repcor}{cor classification Ksurfaces}
Fix $K>0$. There is a bijection between the set of future-convex entire surfaces of constant Gaussian curvature $K$ in $\R^{2,1}$ and the set of lower semicontinuous functions $\ph:\partial\D\to\R\cup\{+\infty\}$ finite on at least three points, which is defined by $\Sigma\mapsto (u_\Sigma)|_{\partial\D}$.
\end{repcor}

\begin{proof}
Let $\Sigma$ be a future-convex entire CGC-$K$ surface. By Corollary \ref{cor domains dependence}, the domain of dependence of $\Sigma$ is of the form $\mathcal D_\ph$ for some lower semi-continuous function $\ph$. In light of Theorem \ref{thm-CGC-problem}, it remains only to rule out the possibility that $\ph$ is finite at 0, 1, or 2 points.

But by Theorem \ref{thm gauss image}, the image of the Gauss map of $\Sigma$ must be the interior of the convex hull of those points where $\ph$ is finite.
Clearly the image of the Gauss map of $\Sigma$ must be nonempty, which rules out the cases of 0 or 1 points, and by strict convexity it must also have interior, which rules out the case of 2 points.
\end{proof}

As a remark, we mention that there is a natural action of $\Isom(\R^{2,1})=\SO_0(2,1)\rtimes\R^{2,1}$ on the set  of future-convex entire surfaces of constant Gaussian curvature. Under the bijection of Corollary \ref{cor classification Ksurfaces}, this action corresponds to a natural action of the semi-direct product $\SO_0(2,1)\rtimes\R^{2,1}$ on the set of lower semi-continuous functions. See also \cite{seppisurvey}.

%Todo: We still need to prove thm-GCG-problem-2 by showing that it needs at least 3 support planes.

\section{Foliations by CGC surfaces}\label{sec:foliation}

The purpose of this section is to show that any regular domain $\mathcal D$ which is not a wedge if foliated by the (unique) CGC-$K$ surfaces $\Sigma_K$ having domain of dependence $\mathcal D$, as for Theorem \ref{thm-CGC-problem}.

\begin{reptheorem}{thm CGC foliation}
For every regular domain $\mathcal D$ in $\R^{2,1}$ which is not a wedge, there exists a unique foliation by properly embedded CGC-$K$ surfaces, as $K\in(0,\infty)$.
\end{reptheorem}

\begin{proof}
There are three main steps in the proof. The first two steps will show that the $K$-surfaces provide a foliation of a region of $\mathcal D$. Then the third step will show that this region ``fills up'' $\mathcal D$ close to $\partial\mathcal D$ and close to infinity.
\begin{steps}
\item Let us first show that the CGC-$K$ surfaces are disjoint: more precisely, if $K_1<K_2$, then $\Sigma_{K_1}$ is in the future of $\Sigma_{K_2}$. Let $u_1$ and $u_2$ be the support functions of $\Sigma_{K_1}$ and $\Sigma_{K_2}$ respectively. Both $u_1$ and $u_2$ satisfy the inequalities \eqref{eq inequality for uniqueness 1} and \eqref{eq inequality for uniqueness 2} for $K = K_1$. Therefore, their difference tends to zero at the boundary of $\Omega_\ph$. By the comparison principle, $u_2-u_1$ cannot have an interior minimum, and so it follows that $u_2 > u_1$ strictly on the whole domain $\Omega_\ph$. From the formula for the convex dual, we see immediately that $\Sigma_{K_1}$ lies weakly in the future of $\Sigma_{K_2}$; since both surfaces are entire, the strict inequality $u_2 > u_1$ implies that they cannot be tangent at any point, and so we in fact have that $\Sigma_{K_1}$ lies strictly in the future of $\Sigma_{K_2}$.

\item It remains to show that, for every point $\pmb x\in\mathcal D$, there exists a CGC surface $\Sigma_K$ such that $\pmb x\in \Sigma_K$. Hence let 
$$\mathcal I_-:=\{K\in(0,\infty)\,:\,\pmb x\in \overline{I^-(\Sigma_K)}\}~,$$
and analogously 
$$\mathcal I_+:=\{K\in(0,\infty)\,:\,\pmb x\in \overline{I^+(\Sigma_K)}\}~.$$
We emphasize that $\mathcal I_-$ corresponds to surfaces lying above $\pmb x$ and $\mathcal I_+$ corresponds to surfaces lying below $\pmb x$. In this step we show that $\mathcal I_-$ and $\mathcal I_+$ are nonempty. 

Let $\ph = \ph_{\mathcal D}$. The point $\pmb x$ corresponds to an affine plane $P_{\pmb x}$ in $\D \times \R$ which lies strictly below the graph of $\ph$. Moreover, the point $\pmb x$ is in $I^+(\Sigma_K)$ if and only if the plane $P_{\pmb x}$ lies below the graph of the support function $u_{\Sigma_K}$. By \eqref{eq inequality for uniqueness 1} we have for all $K$
$$\mathrm{conv}(\ph)(\mathsf z)-\frac{1}{\sqrt K}\sqrt{1-|\mathsf z|^2}\leq u_{\Sigma_K}(\mathsf z)\leq \mathrm{conv}(\ph)(\mathsf z)~.$$
Therefore, as $K \to \infty$, the support function $u_{\Sigma_K}$ converges to $\mathrm{conv}(\ph)$, so for $K$ large enough, $P_{\pmb x}$ lies below the graph of $u_{\Sigma_K}$. This shows that $\mathcal I_+$ is nonempty.

On the other hand, let $\mathcal{D}_0$ be any triangular domain containing $\mathcal D$. Since it is invariant under rescaling, $\mathcal{D}_0$ is foliated by rescaled copies of its corresponding CGC-1 triangular surface $\Sigma^0$. As the rescaled surfaces tend towards infinity, their curvature tends to zero. In particular, for some $K$ small enough, $\Sigma^0_{K}$ is in the future of $\pmb x$. By the same application of the comparison principle as in the proof of entireness (Section \ref{sec entireness}), $\Sigma_K$ lies in the future of $\Sigma^0_{K}$. This shows that $\mathcal I_-$ is nonempty.

\item By the previous two steps, $\mathcal I_-$ and $\mathcal I_+$ are nonempty and connected, with $\inf \mathcal{I_-} = 0$ and $\sup \mathcal I_+ = + \infty$. In this step, we show that $\sup \mathcal I_- = \inf \mathcal I_+ =: K_{\pmb x}$ and $\pmb x \in \Sigma_{K_{\pmb x}}$.

Let $K_- = \sup \mathcal I_-$, and let $K_i \in \mathcal I_-$ be an increasing sequence tending to $K_-$. Then $\Sigma_{K_i}$ are a decreasing sequence of surfaces with $\pmb x$ in their past. The corresponding support functions $u_i$ form an increasing sequence bounded from above by $\mathrm{conv}(\ph)$, so they converge uniformly on compact subsets of $\Omega_\ph$ to a limit $u_\infty$. By Lemma \ref{convergence of solutions}, $u_\infty$ is a solution of the Monge-Amp\`{e}re equation \eqref{eq monge constant curvature} on $\Omega_\ph$ with curvature $K_-$. Passing to the limit, inequalities \eqref{eq inequality for uniqueness 1} and \eqref{eq inequality for uniqueness 2} applied to $u_i$ show that $u_\infty$ is equal to $\mathrm{conv}(\ph)$ on $\partial \Omega_\ph$. Hence by the uniqueness part of Theorem \ref{thm existence monge ampere}, $u_\infty$ is the support function of $\Sigma_{K_-}$.

Since the functions $u_i$ converge uniformly on compact sets to a strictly convex limit, the convergence must be at least $C^1$, and therefore the dual surface $\Sigma_{K_i}$ also converge locally uniformly to $\Sigma_{K_-}$. In particular, since $\pmb x \in I^-(\Sigma_{K_i})$, it follows that $\pmb x \in \overline{I^-(\Sigma_{K_-})}$. Therefore $K_- \in \mathcal I_-$.

Similarly, if $K_+ = \inf \mathcal I_+$, we can produce an increasing family of surfaces $\Sigma_{K_i}$, with a corresponding decreasing family of support functions $u_i$. Since $\pmb x$ is in the future of each surface $\Sigma_{K_i}$, the corresponding affine plane $P_{\pmb x}$ in $\D \times \R$ lies below the graph of each function $u_i$. Since the $u_i$ are bounded below, they again converge uniformly on compact sets and as above the limit is the support function of $\Sigma_{K_+}$. We conclude similarly that $K_+ \in \mathcal I_+$.

We have now shown that $\mathcal I_-$ and $\mathcal I_+$ are closed as subsets of $(0,\infty)$. They clearly cover the interval $(0,\infty)$, so they must have nonempty intersection. On the other hand, the intersection consists precisely of those $K$ such that $\pmb x \in \Sigma_K$, which by Step 1 must be a single value. Hence $K_- = K_+$, and this surface contains $\pmb x$.
\qedhere
\end{steps}
\end{proof}

\section{Open questions}\label{sec:final}

We conclude by mentioning an open question on the subject. Theorem \ref{thm-CGC-problem} provides a classification of entire CGC-$K$ surfaces in $\R^{2,1}$, which is also stated in Corollary \ref{cor classification Ksurfaces}. It would be interesting to classify \emph{complete} CGC-$K$ surfaces (which are automatically entire), in terms of the function $\ph=(u_\Sigma)|_{\partial\D}$. This would give a classification of all $C^2$ isometric immersion of the hyperbolic plane into $\R^{2,1}$.

In \cite{Bonsante:2015vi} a characterization of surfaces \emph{with bounded principal curvature} was obtained. That is, an entire surface $\Sigma$ has bounded principal curvatures (that is, the principal curvatures are in an interval $[1/C,C]$ for some $C>0$) if and only if $\ph$ has the Zygmund regularity. Let us observe that, if $\Sigma$ has bounded principal curvatures, then it is complete, since the Gauss map is bi-Lipschitz in this case. In particular, this characterization does not depend on $K$. 

Hence the class of complete CGC-$K$ surfaces correspond to a subset of the space of lower semicontinuous functions, finite on at least three points, which contains Zygmund regular functions. In particular, it contains Lipschitz functions. We actually have some negative examples: first, the entire CGC-$K$ surfaces considered in Section \ref{sec triangular surfaces}, are not complete, since the induced metric is homothetic to an ideal triangle in $\Hyp^2$. In this case, the support function is only finite on three points. Moreover, in \cite{Bonsante:2015vi} another example was provided, namely an entire non-complete surface (the induced metric is isometric to the universal cover of $\Hyp^2\setminus\{p\}$ for a point $p\in\Hyp^2$), whose support function on $\partial\D$ is:
$$\ph(\xi)=\begin{cases} a & \textrm{if }\xi=\xi_0 \\ b & \textrm{if }\xi\neq\xi_0 \end{cases}~,$$
for any $a<b$. 

We remark that in all example we know of noncomplete entire CGC-$K$ surfaces, the support function $\ph$ on $\partial \D$ has the property that there is a point $\xi_0 \in \partial \D$ at which
$$\liminf_{\substack{\xi\to\xi_0\\ \xi\neq \xi_0}}\ph(\xi)>\ph(\xi_0)~.$$
It would be interesting to know if this is a necessary or sufficient condition.

% Based on several known examples, we state a conjecture here:

% \begin{conj}
% An entire future-convex surface $\Sigma\subset\R^{2,1}$ of constant Gaussian curvature is complete if and only if the function $\ph=(u_\Sigma)|_{\partial\D}$ satisfies:
% $$\liminf_{\substack{\xi\to\xi_0\\ \xi\neq \xi_0}}\ph(\xi)=\ph(\xi_0)~,$$
% for every $\xi_0\in\partial\D$.
% \end{conj}

% Observe that, by lower semicontinuity of $\ph$, we always have
% $$\ph(\xi_0)\leq \liminf_{\substack{\xi\to\xi_0\\ \xi\neq \xi_0}}\ph(\xi)~.$$
% In other words, the failure of being complete \emph{in the direction of} $\xi_0$, would consist in the non-existence of a sequence $\xi_n\to\xi_0$ such that $\lim\ph(\xi_n)=\ph(\xi_0)$.

% On the other hand, it is known that entire CMC-$H$ surfaces are always complete. It would be interesting to understand the conformal type (which can be either $\C$ or $\D$) of a CMC-$H$ surface in terms of its support function at infinity $\ph=(u_\Pi)|_{\partial\D}$. We remark that, from the results in \cite{wancmc} and \cite{wanau}, there is a classification of CMC-$H$ surfaces in $\R^{2,1}$ (up to isometries of $\R^{2,1}$) in terms of holomorphic quadratic differentials either on $\D$ or on $\C$. Understanding the conformal type is thus a first step towards the understanding of the correspondence between the parameterizations by means of lower semicontinuous functions $\ph:\partial\D\to\R\cup\{+\infty\}$ (up to quadratic functions) and the parameterization which uses holomorphic quadratic differentials.

%\clearpage
\addtocontents{toc}{\SkipTocEntry}
\bibliographystyle{alpha}
\bibliographystyle{ieeetr}
%\addtocontents{toc}{\SkipTocEntry}
\bibliography{../bs-bibliography.bib}

\begin{thebibliography}{HTTW95}

\bibitem[BBZ11]{barbotzeghib}
Thierry Barbot, Fran{\c{c}}ois B{\'e}guin, and Abdelghani Zeghib.
\newblock Prescribing {G}auss curvature of surfaces in 3-dimensional
  spacetimes: application to the {M}inkowski problem in the {M}inkowski space.
\newblock {\em Ann. Inst. Fourier (Grenoble)}, 61(2):511--591, 2011.

\bibitem[BF17]{bonfill}
Francesco Bonsante and Fran\c{c}ois Fillastre.
\newblock The equivariant {M}inkowski problem in {M}inkowski space.
\newblock {\em Ann. Inst. Fourier (Grenoble)}, 67(3):1035--1113, 2017.

\bibitem[Bon05]{Bonsante}
Francesco Bonsante.
\newblock Flat spacetimes with compact hyperbolic {C}auchy surfaces.
\newblock {\em J. Differential Geom.}, 69(3):441--521, 2005.

\bibitem[BS17]{Bonsante:2015vi}
Francesco Bonsante and Andrea Seppi.
\newblock {Spacelike convex surfaces with prescribed curvature in
  (2+1)-Minkowski space}.
\newblock {\em Adv. in Math.}, 304:434--493, 2017.

\bibitem[CT90]{choitreibergs}
Hyeong~In Choi and Andrejs Treibergs.
\newblock Gauss maps of spacelike constant mean curvature hypersurfaces of
  {M}inkowski space.
\newblock {\em J. Differential Geom.}, 32(3):775--817, 1990.

\bibitem[Dan13]{Danciger:2013ab}
Jeffrey Danciger.
\newblock A geometric transition from hyperbolic to anti-de {S}itter geometry.
\newblock {\em Geom. Topol.}, 17(5):3077--3134, 2013.

\bibitem[dC76]{Carmo:1976aa}
Manfredo~P. do~Carmo.
\newblock {\em Differential geometry of curves and surfaces}.
\newblock Prentice-Hall, Inc., Englewood Cliffs, N.J., 1976.
\newblock Translated from the Portuguese.

\bibitem[FS18]{surveyseppifillastre}
Fran{\c{c}}ois Fillastre and Andrea Seppi.
\newblock Spherical, hyperbolic and other projective geometries: convexity,
  duality, transitions.
\newblock In {\em Sixteen essays on non-Euclidean geometry (Athanase
  Papadopoulos ed.)}. European Mathematical Society Publishing House, 2018.

\bibitem[GJS06]{schoenetal}
Bo~Guan, Huai-Yu Jian, and Richard~M. Schoen.
\newblock Entire spacelike hypersurfaces of prescribed {G}auss curvature in
  {M}inkowski space.
\newblock {\em J. Reine Angew. Math.}, 595:167--188, 2006.

\bibitem[GKR68]{MR0230219}
David Gale, Victor Klee, and R.~T. Rockafellar.
\newblock Convex functions on convex polytopes.
\newblock {\em Proc. Amer. Math. Soc.}, 19:867--873, 1968.

\bibitem[Gut01]{gutierrez}
Cristian~E. Guti{\'e}rrez.
\newblock {\em The {M}onge-{A}mp\`ere equation}.
\newblock Progress in Nonlinear Differential Equations and their Applications,
  44. Birkh\"auser Boston, Inc., Boston, MA, 2001.

\bibitem[HN83]{hanonomizu}
Jun-ichi Hano and Katsumi Nomizu.
\newblock On isometric immersions of the hyperbolic plane into the
  {L}orentz-{M}inkowski space and the {M}onge-{A}mp\`ere equation of a certain
  type.
\newblock {\em Math. Ann.}, 262(2):245--253, 1983.

\bibitem[HTTW95]{hantamtreibergswan}
Zheng-Chao Han, Luen-Fai Tam, Andrejs Treibergs, and Tom Wan.
\newblock Harmonic maps from the complex plane into surfaces with nonpositive
  curvature.
\newblock {\em Comm. Anal. Geom.}, 3(1-2):85--114, 1995.

\bibitem[Lab92]{labourieCP}
Fran{\c{c}}ois Labourie.
\newblock Surfaces convexes dans l'espace hyperbolique et {${\bf C}{\rm
  P}^1$}-structures.
\newblock {\em J. London Math. Soc. (2)}, 45(3):549--565, 1992.

\bibitem[Li95]{Li}
An-Min Li.
\newblock Spacelike hypersurfaces with constant {G}auss-{K}ronecker curvature
  in the {M}inkowski space.
\newblock {\em Arch. Math.}, 64:534--551, 1995.

\bibitem[Roc70]{rockafellar}
R.~Tyrrell Rockafellar.
\newblock {\em Convex analysis}.
\newblock Princeton Mathematical Series, No. 28. Princeton University Press,
  Princeton, N.J., 1970.

\bibitem[Sep17]{seppisurvey}
Andrea Seppi.
\newblock Spacelike surfaces of constant {G}aussian curvature in
  {L}orentz-{M}inkowski space.
\newblock In {\em Differential Geometry in Lorentz-Minkowski space (Rafael
  L{\'o}pez Camino ed.)}, pages 143--167. EUG, Granada, 2017.

\bibitem[Smi17]{grahamsmith}
Graham Smith.
\newblock Constant scalar curvature hypersurfaces in (3+1)-dimensional {GHMC}
  {M}inkowski spacetimes.
\newblock {\em To appear in J. Geom. Phys.}, 2017.

\bibitem[TW08]{trudwang}
N.~Trudinger and X.-J. Wang.
\newblock The {M}onge-{A}mp\'ere equation and its geometric applications.
\newblock In {\em Handbook of geometric analysis}, volume~7 of {\em Adv. Lect.
  Math. (ALM)}, pages 467--524. Int. Press, Somerville, MA, 2008.

\bibitem[WA94]{wanau}
Tom Yau-Heng Wan and Thomas Kwok-Keung Au.
\newblock Parabolic constant mean curvature spacelike surfaces.
\newblock {\em Proc. Amer. Math. Soc.}, 120(2):559--564, 1994.

\end{thebibliography}

\end{document}